\date{\today}
\title[Characteristic forms]%
{Characteristic forms of complex Cartan geometries II}
\newcommand{\authorsname}{\texorpdfstring{Benjamin \scotsMc{}Kay}{Benjamin McKay}}
\author{\authorsname}
\address{School of Mathematical Sciences,  University College Cork, Cork, Ireland}
\email{b.mckay@ucc.ie}
\thanks{Thanks to Anca Musta\c{t}\u{a} and Andrei Musta\c{t}\u{a} for help with algebraic geometry, to Francesco Russo for his invitation to carry out this work at the University of Catania, to Tatsuo Suwa and Filippo Bracci for explaining \Cech--Dolbeault cohomology and to Robert Bryant for his insights into work of Cartan.}
\keywords{complex projective manifold, parabolic geometry}
\subjclass[2000]{Primary 53B21; Secondary 53C56, 53A55}
\DeclareRobustCommand{\scotsMc}{\scotsMcx{c}}
\DeclareRobustCommand{\scotsMC}{\scotsMcx{\textsc{c}}}
\DeclareRobustCommand{\scotsMcx}[1]{%
  M%
  \raisebox{\dimexpr\fontcharht\font`M-\height}{%
    \check@mathfonts\fontsize{\sf@size}{0}\selectfont
    \kern.3ex\underline{\kern-.3ex #1\kern-.3ex}\kern.3ex
  }%
}
\def\expandafter\@uclclist\expandafter{%
  \@uclclist\scotsMc\scotsMC
}
\newtheorem{theorem}{Theorem}[section]
\newtheorem{corollary}[theorem]{Corollary}
\newtheorem{lemma}[theorem]{Lemma}
\newtheorem{proposition}[theorem]{Proposition}
\theoremstyle{remark}
\newtheorem{example}[theorem]{Example}
\newtheorem{remark}[theorem]{Remark}
\newcounter{remarkCounter}
\NewDocumentCommand\map{omm}%
{%
\IfValueTF{#1}%
{{#2}\xrightarrow{#1}{#3}}%
{{#2}\to{#3}}%
}%
\NewDocumentCommand\mapto{omm}%
{%
\IfValueTF{#1}%
{{#2}\xmapsto{#1}{#3}}%
{{#2}\mapsto{#3}}%
}%
\NewDocumentCommand\pr{m}{\ensuremath{\left(#1\right)}}
\NewDocumentCommand\of{m}{\ensuremath{\!\pr{#1}}}
\RenewDocumentCommand\C{o}%
{%
	\ensuremath%
	{\IfValueTF{#1}{\mathbb{C}^{#1}}{\mathbb{C}}}%
}%
\NewDocumentCommand\OO{D(){0}O{1}}%
{
\ensuremath{
\mathcal{O}%
\ifnum\pdfstrcmp{#1}{0}=0{}\else{(#1)}\fi
\ifnum\pdfstrcmp{#2}{1}=0{}\else{
	\ifnum\pdfstrcmp{#2}{0}=0{}\else{^{\oplus #2}}\fi
}\fi
}
}
\RenewDocumentCommand\Proj{m}{\ensuremath{\mathbb{P}^{#1}}}
\NewDocumentCommand\Proj{m}{\ensuremath{\mathbb{P}^{#1}}}
\NewDocumentCommand\Sym{mm}{\ensuremath{\operatorname{Sym}^{#1}\!\of{#2}}}
\NewDocumentCommand\GL{m}{\ensuremath{\operatorname{GL}_{#1}}}
\NewDocumentCommand\Lie{mo}{\ensuremath{\mathfrak{\MakeLowercase{#1}}\IfValueT{#2}{_{#2}}}}
\NewDocumentCommand\LieGL{m}{\Lie{GL}[#1]}
\NewDocumentCommand\SL{m}{\ensuremath{\operatorname{SL}_{#1}}}
\NewDocumentCommand\LieSL{m}{\Lie{SL}[#1]}
\NewDocumentCommand\LieSO{m}{\Lie{SO}[#1]}
\NewDocumentCommand\PSL{m}{\ensuremath{\mathbb{P}\!\operatorname{SL}_{#1}}}
\NewDocumentCommand\PO{m}{\ensuremath{\mathbb{P}\operatorname{O}_{#1}}}
\NewDocumentCommand\Gr{mm}{\ensuremath{\operatorname{Gr}_{#1}{#2}}}
\NewDocumentCommand\nForms{omm}
{%
\IfValueTF{#1}
{\ensuremath{\vb{#1}^{#2}_{#3}}}
{\ensuremath{\Omega^{#2}_{#3}}}
}%
\NewDocumentCommand\Lm{smm}{\ensuremath{\Lambda^{#2}\IfBooleanTF{#1}{\!\left(#3\right)}{#3}}}
\NewDocumentCommand\cohomology{omm}{\ensuremath{H_{\IfValueT{#1}{\IfStrEq{#1}{db}{\bar\partial}{#1}}}^{#2}\of{#3}}}
\DeclareMathOperator{\Ad}{Ad}
\DeclareMathOperator{\ad}{ad}
\NewDocumentCommand\LieDer{}{\ensuremath{\mathcal L}}
\NewDocumentCommand\hook{}{\ensuremath{\mathbin{ \hbox{\vrule height1.4pt
        width4pt depth-1pt \vrule height4pt width0.4pt depth-1pt}}}}
\NewDocumentCommand\Orth{m}{\ensuremath{\operatorname{O}_{#1}}}
\NewDocumentCommand\defeq{}{\coloneqq}
\NewDocumentCommand\drop{mm}{\ensuremath{#2/#1}}
\NewDocumentCommand\smooth{}{\ensuremath{C^{\infty}}\xspace}
\NewDocumentCommand\MakeLie{m}{\expandafter\def\csname Lie#1\endcsname{\Lie{#1}}}
\def\lst{A,B,G,H,K,L,P,Q,R,S,T,U,Z}
\lst\do{\expandafter\MakeLie \i}
\NewDocumentCommand\lb{smm}%
{%
\IfBooleanTF{#1}%
{
\ensuremath{\left[{#2},{#3}\right]}
}
{
\ensuremath{[{#2}{#3}]}
}
}%
\NewDocumentCommand\prol{omo}{#2^{(\IfValueTF{#1}{#1}{1})\IfValueT{#3}{#3}}}
\NewDocumentCommand\transpose{m}{\tensor[^\top]{\!#1}{}}
\NewDocumentCommand\At{o}{\operatorname{At}_{\IfValueT{#1}{#1}}}
\newcommand\varul[2][3]{\mkern#1mu\underline{\mkern-#1mu#2\mkern-#1mu}\mkern#1mu}
\NewDocumentCommand\qH{}{\varul{H}}
\NewDocumentCommand\LieqH{}{\varul[1]{\LieH}}
\NewDocumentCommand\qom{}{\varul[2]\omega}
\NewDocumentCommand\qv{}{\varul[2]{v}}
\NewDocumentCommand\qe{}{\varul[2]{e}}
\NewDocumentCommand\qE{}{\varul{E}}
\NewDocumentCommand\CdB{}{\ensuremath{\check\partial}}
\NewDocumentCommand\CSl{}{\ensuremath{\check\nabla}}
\NewDocumentCommand{\dS}{}{\nabla}
\NewDocumentCommand{\Dlb}{sm}%
{%
\IfBooleanTF{#1}%
{
\Dlb{\left(#2\right)}%
}
{
#2^{\bar\partial}%
}
}%
\NewDocumentCommand{\Cch}{sm}%
{%
\IfBooleanTF{#1}%
{
\left(#2\right)^{\vee}%
}
{
\check{#2}%
}
}%
\NewDocumentCommand\Slovak{}{Slov\'ak}
\NewDocumentCommand\Cech{}{\u{C}ech}
\NewDocumentCommand\deRham{}{de~Rham}
\NewDocumentCommand\inci{}{\hat\imath}
\NewDocumentCommand\cpx{om}{\ensuremath{#2_{\bullet\IfValueT{#1}{-#1}}}}
\NewDocumentCommand\gLm{omom}%
{%
\IfValueTF{#1}%
{
{\varul[4]{#1}}%
}
{
\varul{\Lambda}%
}
^{#2,\IfValueTF{#3}{#3,#4}{#4}}%
}%
\NewDocumentCommand\gForms{omom}%
{%
\IfValueTF{#1}%
{
{\varul[4]{\vb{#1}}}%
}
{
\varul{\Omega}%
}
^{#2,\IfValueTF{#3}{#3,#4}{#4}}%
}%
\NewDocumentCommand\ch{mo}{\operatorname{ch}_{#1}\IfValueT{#2}{\!\left(#2\right)}}
\NewDocumentCommand\chp{mo}{\operatorname{ch}'_{#1}\IfValueT{#2}{\!\left(#2\right)}}
\NewDocumentCommand\om{m}{ω{#1}}
\NewDocumentCommand\w{}{\wedge}
\NewDocumentCommand\vp{m}{ϖ{#1}}
\newcommand{\amal}[3]{\ensuremath{{#1} \mathbin{\times^{#2}} \! #3}}
\NewDocumentCommand\vb{m}{\ensuremath{\bm{#1}}}
\NewDocumentCommand\Conn{m}{\ensuremath{\mathscr{A}_{#1}}}
\def\l@subsection{\@tocline{2}{0pt}{2.5pc}{5pc}{}}
\def\l@section{\@tocline{1}{0pt}{2.5pc}{5pc}{}}
\def\@tocline#1#2#3#4#5#6#7{\relax
  \ifnum #1>\c@tocdepth 
  \else
    \par \addpenalty\@secpenalty\addvspace{#2}%
    \begingroup \hyphenpenalty\@M
    \@ifempty{#4}{%
      \@tempdima\csname r@tocindent\number#1\endcsname\relax
    }{%
      \@tempdima#4\relax
    }%
    \parindent\z@\relax
\leftskip#3\relax \advance\leftskip\@tempdima\relax
    #5\leavevmode\hskip-\@tempdima{#6}\nobreak\relax
    ,~#7\par
    \nobreak
    \endgroup
  \fi}
\NewDocumentCommand\CS{mo}{T_{#1\IfValueT{#2}{,#2}}}
\NewDocumentCommand\proj{om}{\operatorname{proj}_{\IfValueT{#1}{#1}}\!\left(#2\right)}
\NewDocumentCommand\exactSeq{smO{}mO{}m}%
{%
\begin{tikzcd}[cramped, sep=small,ampersand replacement=\&]
\IfBooleanTF{#1}{1}{0} \rar \& #2 \rar{#3} \& #4 \rar{#5} \& #6 \rar \& \IfBooleanTF{#1}{1}{0}
\end{tikzcd}%
}%
\NewDocumentCommand\bundle{smO{}mO{}m}%
{%
\IfBooleanTF{#1}%
{%
\begin{tikzcd}[cramped, sep=small,ampersand replacement=\&]
#2 \rar{#3} \& #4 \rar{#5} \& #6
\end{tikzcd}%
}%
{%
\begin{tikzcd}[cramped, sep=small,ampersand replacement=\&]
#2 \rar{#3} \& #4 \dar{#5} \\ \& #6
\end{tikzcd}%
}%
}%
\begin{document}
\begin{abstract}
Characteristic class relations in Dolbeault cohomology follow from the existence of a holomorphic Cartan geometry (for example, a holomorphic conformal structure or a holomorphic projective connection).
These relations can be calculated directly from the representation theory of the structure group, without selecting any metric or connection or having any knowledge of the Dolbeault cohomology groups of the manifold.
This paper improves on its predecessor \cite{McKay:2011} by allowing noncompact and non-K\"ahler manifolds and by deriving invariants in cohomology of vector bundles, not just in scalar Dolbeault cohomology, and computing relations involving Chern--Simons invariants in Dolbeault cohomology.
For the geometric structures previously considered in its predecessor, this paper gives stronger results and simplifies the computations.
It gives the first results on Chern--Simons invariants of Cartan geometries.
\end{abstract}
\maketitle
\begin{center}
\tableofcontents
\end{center}
\section{Introduction}
Geometers have developed an extensive knowledge of holomorphic geometric structures on smooth projective varieties, and on many other complex manifolds.
These geometric structures are rare, while meromorphic geometric structures are far too common.
So is it natural to seek important special examples of meromorphic geometric structures.
We prepare for the arrival of such structures by considering how we might compute residue integrals of invariants, e.g. Chern--Simons invariants, around their singularities.

We prove that any invariant polynomial relation between Atiyah classes of homogeneous holomorphic vector bundles on the model of a holomorphic Cartan geometry is also satisfied on that holomorphic Cartan geometry in Dolbeault cohomology.
Our work is similar to the construction of \cite{Eastwood/Gindikin/Wong:1996}, building a holomorphic cohomology theory directly in the structure equations of these geometric structures.
We prove the only known results about Chern--Simons invariants of Cartan geometries: computations of previously unsuspected polynomial relations, which we find are explicitly computable, sometimes by hand, in terms of Cartan's structure equations.
\section{Holomorphic Cartan geometries}
\subsection{Notation}
Denote the Lie algebra of a Lie group \(H\) as \(\LieH\), and similarly denote the Lie algebra of any Lie group by the corresponding fraktur font expression.
All Lie groups are complex analytic and finite dimensional.
All \(H\)-modules are finite dimensional and holomorphic.
Denote the left invariant Maurer--Cartan \(1\)-form on \(H\) as \(h^{-1}dh\).
Take a holomorphic right principal bundle \bundle*{H}{E}{M}.
For each vector \(v \in \LieH\), denote also by \(v\) the associated vector field on \(E\): for any \(x \in E\),
\[
v(x) = \left.\frac{d}{dt}\right|_{t=0} x \, e^{tv}.
\]
If we wish to be more precise, we denote the vector field \(v\) on \(E\) as \(v_E\).
For any \(H\)-action on a manifold \(X\), denote by \(E \times^H X\) the quotient of \(E \times X\) by the diagonal right \(H\)-action \((e,x)h=(eh,h^{-1}x)\).
If \(V\) is a complex analytic \(H\)-module, denote the associated vector bundle by \(\vb{V} \defeq E \times^H V\).
\subsection{Cartan geometries}
A \emph{complex homogeneous space} \((X,G)\) is a complex manifold \(X\) and a complex Lie group \(G\) with a holomorphic transitive action on \(X\).
A \emph{morphism} of complex homogeneous spaces \(\map[(\varphi,\Phi)]{(X,G)}{(X',G')}\) is a holomorphic map \(\map[\varphi]{X}{X'}\) equivariant for a holomorphic Lie group morphism \(\map[\Phi]{G}{G'}\).
Pick a point \(x_0\in X\) and let \(H\defeq G^{x_0}\) be the stabilizer.
Denote by \(\LieG\) be the Lie algebra of \(G\).
A \emph{holomorphic Cartan geometry}, modelled on \((X,G)\), on a complex manifold \(M\) is a holomorphic principal \(H\)-bundle \(H\to E\to M\) and a holomorphic connection \(\omega\in\nForms{1}{E_G}\otimes^G\LieG\) on the associated principal \(G\)-bundle \(E_G\defeq\amal{E}{H}{G}\) so that the inclusion \(\map{E}{E_G}\) is complementary to the \emph{horizontal space} \((\omega=0)\subset TE_G\) \cite{Cap.Slovak.2009a,mckay2023introduction}.
We also denote by \(\omega\) the pullback of \(\omega\) to \(E\).
\subsection{Soldering}
Let \(\LieG_-\defeq\LieG/\LieH\).
The \emph{soldering form} is the \(1\)-form \(ω-\defeq\omega+\LieH\in \amal{\nForms{1}{E}}{H}{(\LieG/\LieH)}\).
Let \(\map[\pi_M]{E}{M}\) be the obvious quotient map \(M=E/H\).
The soldering form yields a surjective vector bundle morphism
\[
\map{TM}{\amal{E}{H}{\LieG_-}}
\]
by taking each \(e \in E\) and vector \(v \in T_e E\), say with \(\pi_E(e)=m \in M\), to both \(\pi_E'(e)v \in T_m M\) and to \((e,w) \in E \times \LieG_-\) by \(w=v \hook \omega+\LieH\).
Let \(H_+\subseteq H\) be a closed complex subgroup acting trivially on \(\LieG_-\).
Let \(\qH\defeq H/H_+\) and \(\qE\defeq E/H_+\).
This morphism descends to
\[
TM\cong\amal{\qE}{\qH}{\LieG_-}.
\]
Motivated by this isomorphism, our goal is not to find characteristic classes of \(E\) but of \(\qE\).
\subsection{Curvature}
Since \(\omega\) is a connection on \(E_G\), its curvature
\[
\Omega\defeq d\omega+\frac{1}{2}\lb{\omega}{\omega}
\]
is semibasic, so pulls back to \(E\) to be a multiple of
\[
ω-^2\in\nForms{2}{E}\otimes^H\Lm{2}{\LieG_-},
\]
say \(\Omega=Kω-^2\).

\subsection{Langlands decompositions}
A \emph{Langlands decomposition} of a complex Lie group \(H\) is a semidirect product decomposition \(H = H_0 \ltimes H_+\) in closed complex subgroups, where \(H_+\) is a connected and simply connected solvable complex Lie group and \(H_0\) is a reductive complex linear algebraic group.
For example:
\begin{enumerate}
\item
This definition generalizes the usual Langlands decomposition of any parabolic subgroup of any complex semisimple Lie group \cite{Knapp:2002} p. 481.
\item
Every connected and simply connected complex Lie group \(H\) admits a Langlands decomposition in which \(H_0\) is a maximal semisimple subgroup and \(H_+\) is the solvradical
\cite{Varadarajan:1984} p. 244 theorem 3.18.13.
\item
Any connected complex Lie group \(H\) admits a faithful holomorphic representation just when it admits a Langlands decomposition in which \(H_0\) is a complex linearly reductive group and \(H_+\) is the nilradical
\cite{Hilgert.Neeb:2012} p. 595 theorem 16.2.7.
\item
Every complex linear algebraic group \(H\) (perhaps disconnected) admits a Langlands decomposition in which \(H_0 \subset H\) is a maximal reductive subgroup and \(H_+ \subset H\) is the unipotent radical \cite{Hochschild:1981} p. 117 theorem 4.3.
In all of our examples below, \(H\) will be complex linear algebraic.
\end{enumerate}
Every connected and simply connected solvable complex Lie group \(H_+\) is biholomorphic to complex affine space \cite{Hilgert.Neeb:2012} p. 543 theorem 14.3.8, and so is a contractible Stein manifold.

Take a complex homogeneous space \((X,G)\), a point \(x_0\in X\), define \(H\defeq G^{x_0}\) and denote the Lie algebra of \(H\subseteq G\) as \(\LieH\subseteq\LieG\).
A \emph{Langlands decomposition} of a complex homogeneous space \((X,G)\) with \(X=G/H\) is a Langlands decomposition \(H = G_0 \ltimes G_+\) together with a splitting \(\LieG=\LieG_-\oplus \LieG_0\oplus \LieG_+\) into \(G_0\)-modules.
We will usually insist also that \(G_+\) acts on \(\LieG_-\defeq\LieG/\LieH\) preserving a filtration and acting trivially on the associated graded, so that the tangent bundle of \(X\) is filtered, with associated graded an associated vector bundle of the principal right bundle \(\map{G_0=H/G_+}{G/G_+}\).

\subsection{Infinitesimal characteristic forms}
Take a complex homogeneous space with Langlands decomposition, in our notation as above.
Take a finite dimensional \(\LieG_0\)-module \(V\).
The \emph{Atiyah form} \(a=a_V\) is the element
\[
a\in\LieG_+^*\otimes\LieG_-^*\otimes\LieGL{V}.
\]
given by
\[
a(x,y)=-\rho_V \circ \proj[\LieG_0]{[xy]},
\]
for \(x\in\LieG_+, y\in\LieG_-\).
If \(V\) is not specified, we take \(V\defeq\LieG_0\) in the adjoint representation.
(Note that we do \emph{not} require that \(V\) be a \(G_0\)-module, so there might not be an associated vector bundle.)
The \emph{Chern forms} \(c_k\) are
\[
c_k \in \Sym{k}{\LieG_+ \otimes \LieG_-}^*
\]
given by
\[
\det\left(I+\frac{i}{2\pi}a\right)=1+c_1+c_2+\dots=c.
\]
Analogously define the Chern character forms and Todd forms.
More generally, if \(f\) is a \(\LieG_0\)-invariant complex symmetric multilinear form on \(\LieG_0\), say of degree \(k\), we associate to \(f\) the element, denoted by the same name,
\[
f \in \Sym{k}{\LieG_+ \otimes \LieG_-}^*
\]
given by
\[
f(a,\dots,a).
\]
The \emph{Chern--Simons form} of \(f\) is
\[
\CS{f}\of{u,v,w_+,w_-}\\
\defeq \sum_{j=0}^{k-1} a_j
f(
	u,
	\underbrace{v,\dots,v}_{j},
	\underbrace{a\of{w_+,w_-},\dots,a\of{w_+,w_-}}_{k-j-1}%
)
\]
where
\[
a_j\defeq\frac{(-1)^j(k-1)!}{
(k+j)!(k-1-j)!}
\]
and
\begin{align*}
u,v&\in\LieG_0, \\
w_+&\in\LieG_+,\\
w_-&\in\LieG_-.
\end{align*}
(N.B. the expression for \(a_j\) is not the same as in the paper of Chern and Simons \cite{Chern/Simons:1974}; their \(A_j\) is \(A_j=a_j/2^j\).)
The splitting principle: if \(0 \to U \to V \to W \to 0\) is an exact sequence of \(\LieG_0\)-modules, extend a basis of \(U\) into a basis of \(V\), so
\[
a_V=
\begin{pmatrix}
a_U & * \\
0 & a_W
\end{pmatrix}
\]
and compute the determinant: \(c(U)c(W)=c(V)\).
The \emph{tangent bundle Atiyah form} is
\[
\mapto[a_T]{x \in \LieG_+, y \in \LieG_-, z \in \LieG-}{\frac{a(x,y)z+a(x,z)y}{2}}.
\]

\section{Characteristic classes}
\subsection{The Atiyah bundle}\label{subsection:connection.bundle}
We review some well known material to establish notation and terminology, following the standard references \cite{Atiyah1957,Chern/Simons:1974}.
Take a holomorphic right principal bundle
\[
\bundle{H}{E}[\pi]{M}.
\]
Let \(\ad_E\defeq \amal{E}{H}{\LieH}\).
The \(H\)-invariant exact sequence
\[
\exactSeq{\ker \pi'}{TE}[\pi']{\pi^* TM}
\]
of vector bundles on \(E\) quotients by \(H\)-action to an exact sequence of vector bundles on \(M\):
\[
\exactSeq{\ad_E}{\At[E]}{TM}
\]
with middle term \(\At[E]=TE/H\) the \emph{Atiyah bundle}.
A holomorphic (\(C^{\infty}\)) splitting \(s\) of this exact sequence determines and is determined by a holomorphic (\(C^{\infty}\)) \((1,0)\)-connection \(\omega=ωs\) for the bundle \(\map{E}{M}\), which is the unique \((1,0)\)-form on \(E\) so that \(s(v)\hook\omega=0\) and \(w\hook\omega=w\) for \(w\in\LieH\), i.e. the splitting lifts each tangent vector to its horizontal lift \cite{Atiyah1957}.
Write the section as \(s=s_{\omega}\).

\subsection{The connection bundle}
The \emph{connection bundle} of \(E\) is the affine subbundle \(\Conn{E}\subset T^*M \otimes_M \At[E]\) consisting of complex linear maps which split the sequence over some point of \(M\); \(\map{\Conn{E}}{M}\) is a holomorphic bundle of affine spaces, modelled on the vector bundle \(T^*M \otimes_M \ad_E\).
So holomorphic \((or C^{\infty})\) \((1,0)\)-connections are precisely holomorphic \((or C^{\infty})\) sections of the connection bundle \(\Conn{E}\).
Differences of two connections lie in \(T^*M \otimes \ad_E\).
The \emph{Atiyah class} of \(H\to E\to M\) is the class in \(\cohomology{1}{M,T^*M \otimes \ad_E}\) given by differences of holomorphic connections on open subsets of \(M\).
Each element \(v\in\At[E,m]\) is an \(H\)-invariant section of \(\map{\left.TE\right|_{E_m}}{\left.\pi^*TM\right|_{E_m}}\).
Each fiber \(\Conn{E,m}\) is the set of all \(H\)-invariant sections \(\omega\) of
\[
\left.T^*E \otimes \LieH\right|_{E_m}
\]
so that \(v\hook\omega=v\) for \(v\in\LieH\) with \(H\)-invariance:
\[
ω{eh}=\Ad_h^{-1} r_h^{-1*}ωe
\]
for \(h\in H\).

Denote the bundle map as \(\map[\delta]{\Conn{E}}{M}\), with pullback
\[
\begin{tikzcd}
E\times_M\Conn{E} \rar{\Delta} \dar & E \dar \\
\Conn{E} \rar{\delta} & M.
\end{tikzcd}
\]
Each point \(x \in E\times_M\Conn{E}\) can be represented uniquely as \(x=(m,e,ω0)\) for some \(m \in M\), \(e \in E_m\), \(\map[ω0]{T_e E}{\LieH}\) so that \(w\hookω0=w\) for \(w \in \LieH\).
Conversely, every such triple \((m,e,\omega_0)\) represents a unique point \(x\in E\times_M\Conn{E}\).
The map \(\Delta\) is then expressed as \(\Delta(x)=\Delta(m,e,\omega_0)=e\).
The right action of \(H\) on \(E\times_M\Conn{E}\) is represented by
\[
(m,e,\omega_0)h=(m,eh,\omega_0')
\]
where
\[
\omega_0'=\Ad_h^{-1}(\omega_0\circ {r_h^{-1}}').
\]
There is a holomorphic connection \(\omega\) on \(E\times_M\Conn{E}\) defined for a tangent vector \(v \in T_x \left(E\times_M\Conn{E}\right)\) by \(v \hook \omega = (\Delta'(x)v)\hook ω0\) \cite{Biswas:2017}.
Given a holomorphic \((C^{\infty})\) \((1,0)\)-connection \(ω0\) on \(\map{E}{M}\), map \(\mapto[\Phi]{e \in E}{\Phi(e)\defeq(m,e,ω0)\in E\times_M\Conn{E}}\).
Compose with the bundle map \(\map{\pi}{E\times_M\Conn{E}}{\Conn{E}}\) to get a section of the connection bundle.
Pullback the bundle \(E\times_M\Conn{E}\) by the section to get a map \(\map[\varphi]{E}{\Conn{E}}\), so that \(\varphi^*E\times_M\Conn{E}=E\) has pullback connection \(\varphi^*\omega=ω0\).
The Dolbeault class \([(dω0)^{1,1}]\) is the Dolbeault representative of the Atiyah class.

The connection bundle of a vector bundle is the connection bundle of its associated principal bundle.

If \(V\) is a finite dimensional \(\LieH\)-module, it might not be an \(H\)-module, so there might be no associated vector bundle, but we can still define an Atiyah class.
Write our module's \(\LieH\)-action as \(\map[\rho]{\LieH}{\LieGL{V}}\).
The \(\LieH\)-module \(\rho(\LieH)\subset\LieGL{V}\) is an \(H\)-module, since \(\LieH\) is an \(H\)-module.
Each local choice of holomorphic connection \(\omega\) on \(E\) gives a differential form \(\rho\circ\omega\) valued in \(\rho(\LieH)\).
The \emph{Atiyah class} of \(H\to E\to M\) is the class \(a(M,\vb{V})\in\cohomology{1}{M,T^*M \otimes (\amal{E}{H}{\rho(\LieH)})}\) given by differences of such forms on overlaps of open subsets of \(M\), even though \(\vb{V}\) need not exist; we say that \(\vb{V}\) is the \emph{ghost} of a departed vector bundle.
For any \((1,0)\)-connection \(\omega\) on \(E\), the \((1,1)\)-form \((\rho\circ d\omega)^{1,1}\) is semibasic, valued in \(\amal{E}{H}{\rho(\LieH)}\), representing that Atiyah class in Dolbeault cohomology.

\subsection{Connection bundles of Cartan geometries}\label{subsection:ConnBundlesShapes}
Take a complex homogeneous space \((X,G)\) with Langlands decomposition.
Take a holomorphic Cartan geometry \(\map{E}{M}\) with model \((X,G)\).
Take a point \(m \in M\), a point \(e\in E_m\) and a tangent vector \(v \in T_m M\).
Let \(\qe\defeq eG_+=q(e)\).
Applying the Langlands decomposition to the Cartan connection \(\omega\), which is valued in \(\LieG\): write it as
\[
\omega=ω-+ω0+ω+,
\]
to define its projections to \(\LieG_-,\LieG_0,\LieG_+\).
Let \(\qE\defeq E/G_+\) and denote the projection map as \(\mapto[\pi_{\qE}]{e\in E}{\qe\in\qE}\).
The \(1\)-form \(ω0\), being semibasic, determines a covector \(\map[ωe]{T_{\qe} \qE}{\LieG_0}\) for each \(e \in E\), uniquely defined by \(\pi_{\qE}^*ωe=ω0\).
Map
\[
\mapto[\Phi]{e\in E}{(m,\qe,ωe)\in\qE\times_M\Conn{\qE}}
\]
which we quotient by \(G_0\)-action to get
\[
\begin{tikzcd}
E \arrow{d} \arrow{r}{\Phi} & \qE\times_M\Conn{\qE} \arrow{d} \arrow{r}{\Delta} & \qE \arrow{d} \\
E/G_0 \arrow{r}{\phi} & \Conn{\qE} \arrow{r}{\delta} & M
\end{tikzcd}
\]
applying the commutative diagram of \vref{subsection:connection.bundle} but to \(\qE\) instead of \(E\).
\begin{lemma}
\(\Phi^*\omega=ω0\).
\end{lemma}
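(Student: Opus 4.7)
The plan is essentially a direct unwinding of definitions, and I do not expect any serious obstacle; the only thing to be careful about is keeping track of which connection \(\omega\) means what, since the letter is overloaded (the Cartan connection on \(E\), its \(\LieG_0\)-component \(ω0\), the connection \(ωe\) on \(\qE\) at the point \(\qe\), and the tautological connection on \(\qE\times_M\Conn{\qE}\) coming from the construction in \Cref{subsection:connection.bundle}).

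First I would recall from the construction in \Cref{subsection:connection.bundle}, applied to \(\qE\to M\) rather than \(E\to M\), that the tautological connection \(\omega\) on \(\qE\times_M\Conn{\qE}\) is defined pointwise by
\[
v\hook\omega=\pr{\Delta'(x)v}\hook ω0
\]
for \(v\in T_x\pr{\qE\times_M\Conn{\qE}}\) and \(x=(m,\qe,ω0)\). Specializing this to a point \(x=\Phi(e)=(m,\qe,ωe)\) in the image of \(\Phi\), and pulling back along \(\Phi\), a tangent vector \(v\in T_e E\) satisfies
\[
v\hook\Phi^*\omega=\pr{\Phi'(e)v}\hook\omega
=\pr{\Delta'(\Phi(e))\,\Phi'(e)v}\hook ωe
=\pr{(\Delta\circ\Phi)'(e)\,v}\hook ωe.
\]

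Next I would identify \(\Delta\circ\Phi\). By the definition of \(\Phi\) we have \(\Delta(\Phi(e))=\Delta(m,\qe,ωe)=\qe=\pi_{\qE}(e)\), so \(\Delta\circ\Phi=\pi_{\qE}\). Therefore
\[
v\hook\Phi^*\omega=\pr{\pi_{\qE}'(e)v}\hook ωe.
\]

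Finally I would invoke the defining property of \(ωe\): it is the unique covector on \(T_{\qe}\qE\) valued in \(\LieG_0\) with \(\pi_{\qE}^*ωe=ω0\). Thus
\[
\pr{\pi_{\qE}'(e)v}\hook ωe=v\hook\pi_{\qE}^*ωe=v\hook ω0,
\]
giving \(\Phi^*\omega=ω0\). The whole argument is a three-line chase of definitions; the only delicate point is distinguishing the tautological connection living on \(\qE\times_M\Conn{\qE}\) from the component \(ω0\) of the Cartan connection on \(E\), and checking that the construction of \(ωe\) is well defined—this is exactly what the semibasicity of \(ω0\) with respect to \(\pi_{\qE}\) provides.
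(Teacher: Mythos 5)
Your proposal is correct and is essentially identical to the paper's own proof: both unwind the definition of the tautological connection on \(\qE\times_M\Conn{\qE}\), use \(\Delta\circ\Phi=\pi_{\qE}\), and conclude with the defining property \(\pi_{\qE}^*\omega_{e}=\omega_{0}\). No changes needed.
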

\begin{proof}
For \(m \in M\), \(e \in E_m\), \(v\in T_e E\), let
\[
x\defeq(m,\qe,ωe)=\Phi(e)\in \qE\times_M\Conn{\qE}.
\]
so \(\Delta(x)=\Delta(m,\qe,ωe)=\qe\).
Compute
\begin{align*}
v \hook (\Phi^*\omega)_e
&=
\Phi'(e)v \hook ωx,
\\
&=
\Delta'(x)\Phi'(e)v \hook ωe,
\\
&=
(\Delta\circ\Phi)'(e)v \hook ωe,
\\
&=\pi_{\qE}'(e)v\hookωe,
\\
&=v\hook\pi_{\qE}^*ωe,
\\
&=v\hookω0.
\end{align*}
\end{proof}
Since \(\omega\) is a holomorphic connection, we find its curvature is \(\Omega\defeq d\omega+\frac{1}{2}\lb{\omega}{\omega}\), and pulls back to \(\Omega_0\defeq dω0+\frac{1}{2}\lb{ω0}{ω0}\), even if \(ω0\) is not a connection.
The Bianchi identity \(d\Omega=\lb{\Omega}{\omega}\) ensures the analogous identity \(d\Omega_0=\lb{\Omega_0}{ω0}\) even if \(ω0\) is not a connection and \(\Omega_0\) is not the curvature of a connection.

Similarly, for any \(G_0\)-invariant complex polynomial function \(\map[f]{\LieG_0}{\C{}}\), thought of as a symmetric multilinear form, the expression
\[
f_E\defeq f(\Omega_0,\dots,\Omega_0)
\]
is the pullback of the Chern form \(f_{\qE\times_M\Conn{\qE}}\) for the connection \(\omega\) on the bundle \(\map{\qE\times_M\Conn{\qE}}{\Conn{\qE}}\).
In particular, \(f_E\) is a closed holomorphic differential form.
Similarly define
\[
\CS{f}[E]\defeq \CS{f}(ω0,\lb{ω0}{ω0},ω+,ω-),
\]
which is the pullback of the Chern--Simons form \(\CS{f}[\qE\times_M\Conn{\qE}]\), hence \(d\CS{f}[E]=f_E\).

Note that \(\LieG_0\oplus\LieG_+=\LieH\subseteq\LieG\) is a Lie subalgebra, so
\begin{align*}
0&=\lb{\LieG_0}{\LieG_0}_-=0,\\
0&=\lb{\LieG_0}{\LieG_+}_-=0,\\
0&=\lb{\LieG_+}{\LieG_+}_-=0.
\end{align*}
Also \(\LieG_+\subseteq\LieH\) is an ideal so
\[
0=\lb{\LieG_+}{\LieG_+}_0.
\]
Finally, \(\LieG_-,\LieG_0,\LieG_+\) are \(G_0\)-modules, so
\begin{align*}
0&=\lb{\LieG_0}{\LieG_-}_0=0,\\
0&=\lb{\LieG_0}{\LieG_-}_+=0,\\
0&=\lb{\LieG_0}{\LieG_0}_+=0,\\
0&=\lb{\LieG_0}{\LieG_+}_0=0.
\end{align*}

Split by Langlands decomposition our expression for the curvature of any Cartan connection,
\[
\Omega=d\omega+\frac{1}{2}\lb{\omega}{\omega}=Kω-^2:
\]
as,
\begin{align*}
\Omega_-&=
dω-
+\frac{1}{2}\lb{ω-}{ω0}_-
+\frac{1}{2}\lb{ω0}{ω-}_-
+\frac{1}{2}\lb{ω+}{ω-}_-,
\\
&=K_-ω-^2-\frac{1}{2}\lb{ω-}{ω-}_-,\\
\\
\Omega_0
&=
dω0
+\frac{1}{2}\lb{ω0}{ω0}
+\frac{1}{2}\lb{ω+}{ω-}_0
+\frac{1}{2}\lb{ω-}{ω+}_0,
\\
&=
K_0ω-^2-\frac{1}{2}\lb{ω-}{ω-}_0,\\
\Omega_+
&=
dω+
+\frac{1}{2}\lb{ω-}{ω+}
+\frac{1}{2}\lb{ω+}{ω-}
+\frac{1}{2}\lb{ω0}{ω+}
+\frac{1}{2}\lb{ω+}{ω0}
+\frac{1}{2}\lb{ω+}{ω+},
\\
&=
K_+ω-^2-\frac{1}{2}\lb{ω-}{ω-}_+.
\end{align*}
Under action of any \(h_+\in G_+\),
\begin{align*}
r_{h_+}^*
\begin{pmatrix}
ω-\\
ω0\\
ω+
\end{pmatrix}
&=
r_{h_+}^*\omega,
\\
&=
\Ad_{h_+}^{-1}\omega,
\\
&=
\begin{pmatrix}
\pi_-h_+^{-1}&0&0\\
-\pi_0h_+&I&0\\
(I-\Ad_{h_+}^{-1})\pi_0h_+-\Ad_{h_+}^{-1}\pi_+h_+&\Ad_{h_+}^{-1}-I&\Ad_{h_+}^{-1}
\end{pmatrix}
\begin{pmatrix}
ω-\\
ω0\\
ω+
\end{pmatrix}.
\end{align*}
\subsection{Smooth reduction of structure group}
Take a complex homogeneous space \((X,G)\) with Langlands decomposition.
Take a holomorphic Cartan geometry \(\map{E}{M}\) with that model.
Split the Cartan connection \(\omega=ω-+ω0+ω+\).
Then \(ω- \in \nForms{1}{E} \otimes^H\LieG_-\) is the \emph{soldering form} and \(ω0 \in \nForms{1}{E} \otimes^{G_0}\LieG_0\) is the \emph{pseudoconnection form}.

Since \(H/G_0\) is contractible, \(\map{E/G_0}{M}\) admits a \smooth section \(\map[s]{M}{E/G_0}\) i.e. a \smooth \(G_0\)-reduction of structure group.
The \(1\)-form \(ω0\) on \(E\) pulls back to a \(1\)-form \(ω0\) on \(s^*E\).
Let \(\qE\defeq E/G_+\), a holomorphic principal right \(\qH\)-bundle \(\bundle*{\qH}{\qE}{M}\).
The \(1\)-form \(ω0\) extends from \(s^*E\) to a unique \(1\)-form on
\(\qE\cong s^*E\) which we also denote \(ω0\), and which satisfies \(v \hook ω0=v\) for \(v \in \LieG_0\).

\begin{lemma}
The \(1\)-form \(ω0\) on \(\qE\) associated to any \smooth (or holomorphic) \(G_0\)-reduction is a \smooth (holomorphic) \((1,0)\)-connection \(1\)-form.
\end{lemma}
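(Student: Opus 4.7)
The plan is to realize $\omega_0$ on $\qE$ as $\sigma^*\omega_0$ for a specific smooth section $\sigma\colon\qE\to E$ of the quotient map $p\colon E\to E/G_+=\qE$, then verify each connection axiom. First I would show that the composition $s^*E\hookrightarrow E\xrightarrow{p}\qE$ is a $G_0$-equivariant diffeomorphism between principal $G_0$-bundles over $M$: injectivity uses $G_0\cap G_+=\{1\}$ from the semidirect product, surjectivity uses that every $e\in E$ factors as $e_0 g_0 h_+$ with $e_0 g_0\in s^*E$ and $h_+\in G_+$, and a tangent-space count confirms smoothness of the inverse (biholomorphy if $s$ is holomorphic). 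The inverse of this diffeomorphism is the required section $\sigma$, and the $1$-form on $\qE$ in the statement is $\sigma^*\omega_0$.

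Three of the four connection axioms are then essentially bookkeeping. That $\sigma^*\omega_0$ is $\LieG_0$-valued is immediate from the definition of $\omega_0$ as the $\LieG_0$-component of $\omega$. For $v\in\LieG_0$ the fundamental vector field on $\qE$ pushes forward under $\sigma$ to the fundamental vector field on $s^*E\subset E$, on which $v\hook\omega=v$ by the Cartan-connection axiom, so $v\hook\sigma^*\omega_0=v$. The equivariance $r_g^*\sigma^*\omega_0=\Ad_{g^{-1}}\sigma^*\omega_0$ for $g\in G_0$ descends from the same identity for $\omega$ on $E$, using that $\sigma$ is $G_0$-equivariant and $\LieG_0\subset\LieG$ is a $G_0$-submodule.

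The nontrivial step, and what I expect to be the main obstacle, is showing that $\sigma^*\omega_0$ is of type $(1,0)$ when $\sigma$ is only smooth. The key observation is that $\omega_0$ on $E$ annihilates every $\LieG_+$-fundamental vector field: for $w\in\LieG_+$ the Cartan axiom gives $w\hook\omega=w\in\LieG_+$, so $w\hook\omega_0=0$. Because $G_+\subseteq H$ is a closed complex subgroup, $p$ is holomorphic, so for any $v\in T_{\qe}\qE$ the vector $\sigma_*(J_{\qE}v)-J_E\sigma_*v$ maps to zero under $p_*$, hence lies in $\ker p_*=(\LieG_+)_E$ and is annihilated by $\omega_0$. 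Combined with $\omega_0$ being $(1,0)$ on $E$ (as a component of the holomorphic form $\omega$), this yields $\sigma^*\omega_0(J_{\qE}v)=i\,\sigma^*\omega_0(v)$. If instead $s$ is holomorphic then $\sigma$ is holomorphic and $\sigma^*\omega_0$ is holomorphic automatically. Without the vanishing of $\omega_0$ on the $\LieG_+$-vertical directions of $p$, the non-holomorphicity of $\sigma$ would introduce a genuine $(0,1)$ component, so that vanishing is really what the argument turns on.
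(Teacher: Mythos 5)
Your proposal is correct and follows essentially the same route as the paper: equivariance and the reproduction of fundamental vector fields come straight from the Cartan-connection axioms, and the $(1,0)$ property comes from the fact that $\omega_0$ kills the $\LieG_+$-fundamental fields, i.e.\ is semibasic for the holomorphic submersion $E\to\qE$, so that complex-linearity descends despite the section being merely smooth. The paper compresses this last point into one sentence (writing $\omega_0=\pi_{\qE}^*\omega_e$ with $\omega_e$ complex linear), whereas you spell out the equivalent computation with $\sigma_*(J_{\qE}v)-J_E\sigma_*v\in\ker p_*$; the underlying mechanism is identical.
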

\begin{proof}
Pick a point \((m,e)\in s^*E\), i.e. with \(m \in M\) and \(e \in E_m\) and \(eG_0=s(m)\).
So \(ω0\) at the corresponding point of \(\qE\) is the \(1\)-form which pulls back by \(\map{E}{\qE}\) to become \(ω0\) at the point \(e\).
If we replace \((m,e)\) by some point \((m,eh)\), for some \(h \in H\),
\[
r_h^*ω0 = \Ad_h^{-1} ω0.
\]
In particular, if \(h \in G_+\), then \(r_h^*ω0=ω0\).
By definition, we extend \(s^*ω0\) to \(\qE\) by \(\qH\)-equivariance.
Since \(ω0\) is complex linear on tangent spaces of \(E\), \(ωe\) is complex linear on tangent spaces of \(\qE\).
For any \(v \in \LieH\), \(v \hook \qom=\qv\), so \(\qv \hook ω0=\qv\).
\end{proof}

\subsection{Characteristic forms and classes}
The \emph{Atiyah form}, \(k^{\text{th}}\) \emph{Chern form}, \emph{Chern character form}, \emph{Todd form}, etc.\ of a Cartan geometry with Cartan connection \(\omega\) is the form identified by \(\omega\) with the Atiyah form, \(k^{\text{th}}\) Chern form, Chern character form, Todd form, etc.\ and similarly for any \(G_0\)-invariant homogeneous polynomial function \(\map[f]{\LieG_0}{\C}\).
\begin{lemma}
The Atiyah class, \(k^{\text{th}}\) Chern class, and so on, in Dolbeault cohomology of the bundle \(\map{\qE}{M}\) of a Cartan connection \(\map{E}{M}\) with Langlands decomposition is the class of the \((1,1)\)-part, \((k,k)\)-part, and so on, of the pullback by a \(C^{\infty}\) section \(s\) of the Atiyah form.
\end{lemma}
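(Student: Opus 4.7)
The plan is to combine the preceding lemma, which produces a smooth $(1,0)$-connection on $\qE$ from a smooth $G_0$-reduction, with the Dolbeault representative formula for the Atiyah class recalled in \vref{subsection:connection.bundle}.

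First, I would take a smooth section $s \colon M \to E/G_0$, whose existence is guaranteed because the fiber $H/G_0 \cong G_+$ is biholomorphic to complex affine space, hence contractible. The reduced bundle $s^*E \to M$ is a smooth principal $G_0$-bundle, and the restriction to $s^*E$ of the quotient map $E \to \qE$ is a smooth isomorphism $s^*E \cong \qE$ of principal $\qH$-bundles over $M$. By the preceding lemma, the induced $\omega_0$ on $\qE$ is a smooth $(1,0)$-connection on the holomorphic bundle $\qE$, with curvature $\Omega_0 = d\omega_0 + \frac{1}{2}\lb{\omega_0}{\omega_0}$.

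Second, I would invoke the Dolbeault representative formula recalled in \vref{subsection:connection.bundle}: for any holomorphic principal bundle equipped with a smooth $(1,0)$-connection, the $(1,1)$-part of $d\omega_0$ is a Dolbeault representative of the Atiyah class in $\cohomology[db]{1}{M, T^*M \otimes \ad_{\qE}}$. Applying the Chern--Weil construction to this smooth connection, the $(k,k)$-part of $f(\Omega_0,\dots,\Omega_0)$ for any $G_0$-invariant homogeneous polynomial $f$ of degree $k$ is a $\bar\partial$-closed form representing the corresponding characteristic class in $\cohomology[db]{k,k}{M}$.

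Finally, I would check that these Chern--Weil forms on $\qE$, which descend to $M$ because they are basic for the genuine connection, coincide with the pullback by $s$ of the characteristic forms of the Cartan geometry. By definition the latter are the forms $f(\Omega_0,\dots,\Omega_0)$ on $E$, and their pullback by $s$ amounts to restricting to $s^*E \subset E$ and descending to $M$ via the quotient $s^*E \to M$; under the isomorphism $s^*E \cong \qE$ this is exactly the Chern--Weil form of $\omega_0$ on $\qE$. Taking $(k,k)$-parts then gives the claim. The only real work is tracking naturality of these identifications, which is routine; the analytic substance is entirely contained in the preceding lemma.
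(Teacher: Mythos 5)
Your first two steps coincide with the paper's: a smooth reduction \(s\colon M\to E/G_0\) exists because \(H/G_0\cong G_+\) is contractible, the induced \(\omega_0\) is a smooth \((1,0)\)-connection on \(\qE\) by the preceding lemma, and \((d\omega_0)^{1,1}\) is the Dolbeault representative of the Atiyah class. The gap is in your final step, where you write that ``by definition'' the characteristic forms of the Cartan geometry are the Chern--Weil forms \(f(\Omega_0,\dots,\Omega_0)\). That is not the definition. The Atiyah form in the statement is the Lie-algebraic object of the subsection on infinitesimal characteristic forms, \(a(x,y)=-\rho_V\circ\proj[\LieG_0]{\lb{x}{y}}\) for \(x\in\LieG_+\), \(y\in\LieG_-\), turned into a \(2\)-form on \(E\) by substituting \(\omega_+\) and \(\omega_-\) into its two slots; the Chern forms are polynomials in this \(a\), not in the curvature of \(\omega_0\). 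The assertion that the \((1,1)\)-part of \(d\omega_0\) equals the \((1,1)\)-part of \(s^*a(\omega_+,\omega_-)\) is precisely the content of the lemma, and your proposal never proves it --- indeed the Atiyah form, and the defining property of a Cartan geometry, never actually enter your argument.

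The missing computation uses the structure equation \(\Omega=d\omega+\frac{1}{2}\lb{\omega}{\omega}=K\omega_-^2\), whose \(\LieG_0\)-component gives
\[
d\omega_0
=
-\tfrac{1}{2}\lb{\omega_0}{\omega_0}
-\lb{\omega_+}{\omega_-}_0
+K_0\,\omega_-^2
-\tfrac{1}{2}\lb{\omega_-}{\omega_-}_0 .
\]
After pulling back by \(s\), both \(\omega_-\) and \(\omega_0\) are forms of type \((1,0)\), while \(\omega_+\) may acquire a \((0,1)\)-part; hence every term on the right except \(-\lb{\omega_+^{0,1}}{\omega_-}_0\) is of type \((2,0)\), and
\[
(d\omega_0)^{1,1}
=
-\proj[\LieG_0]{\lb{\omega_+^{0,1}}{\omega_-}}
=
\bigl(s^*a(\omega_+,\omega_-)\bigr)^{1,1},
\]
which is what the lemma claims; the \((k,k)\) statements follow by substituting this into the invariant polynomials. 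It is here --- and only here --- that the semibasic curvature of the Cartan connection and the \(H\)-invariance of the Langlands splitting are used, so this step cannot be dismissed as routine naturality.
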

\begin{proof}
The Atiyah class \cite{Atiyah1957} of \(\qE\) is represented by
\[
a(M,\qE)=[\bar\partial ω0]=\left[(dω0)^{1,1}\right].
\]
If we pick any \smooth reduction, the \(2\)-form \((dω0)^{1,1}\) pulls back to \(s^*E\) to become
\[
-\proj[\LieG_0]{\rho_{\LieG}(ω+^{0,1}) ʌω-},
\]
where \(\mapto{w \in \LieG}{\proj[\LieqH]{w}\in\LieqH}\) is the \(G_0\)-invariant linear projection.
The Atiyah class is represented by
\[
(dω0)^{1,1} = -\proj[\LieG_0]{\rho_{\LieG}(ω+^{0,1}) ʌω-}
\in
\nForms[V]{1,1}{M}
\]
where \(V\subset\LieG_0\) is the projection to \(\LieG_0\) of the span of
\[
\rho_{\LieG}(\LieG_+) (\LieG_-) \subset \LieG.
\]
\end{proof}
\begin{example}
If the first Chern form of a complex homogeneous space vanishes, then every complex manifold with a Cartan geometry modelled on that complex homogeneous space has a holomorphic connection on its canonical bundle.
\end{example}
\begin{corollary}
For a complex manifold admitting a holomorphic Cartan geometry, whose model has a Langlands decomposition with \(G_+\) acting trivially on \(\LieG/\LieH\), the Atiyah class of the tangent bundle is the \((1,1)\)-part of the pullback by any \(C^{\infty}\) section of the form identified by the Cartan connection with the tangent bundle Atiyah form.
\end{corollary}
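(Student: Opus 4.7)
The plan is to deduce the corollary from the preceding lemma by identifying $TM$ as the vector bundle associated to $\qE$ via the representation $\LieG_-$.

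First, I would record the consequence of the hypothesis: because $G_+$ acts trivially on $\LieG_-=\LieG/\LieH$, the space $\LieG_-$ descends to a $\qH$-module, and the soldering subsection supplies the holomorphic isomorphism $TM\cong\amal{\qE}{\qH}{\LieG_-}$. Thus $TM$ is the vector bundle associated to $\qE$ through $\rho_{\LieG_-}\colon\LieG_0\to\LieGL{\LieG_-}$.

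Second, I would invoke the preceding lemma. It provides, after pullback by any smooth section $s$ of $E/G_0\to M$, the Dolbeault representative
\[
-\proj[\LieG_0]{\rho_{\LieG}(\omega_+^{0,1})\wedge\omega_-}
\]
for the Atiyah class of $\qE\to M$. The standard naturality of the Atiyah class under an associated bundle construction then composes this representative with $\rho_{\LieG_-}$, producing an $\operatorname{End}(TM)$-valued $(1,1)$-form on $M$ that represents the Atiyah class of $TM$, namely the form assigned by the Cartan connection to the Atiyah form $a_{\LieG_-}$ of the model.

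Third, I would identify this representative with the form assigned by the Cartan connection to the \emph{tangent bundle} Atiyah form $a_T$. This is the crux of the proof: the definition
\[
a_T(x,y,z)=\tfrac12\bigl(a(x,y)z+a(x,z)y\bigr)
\]
differs from the raw value $a_{\LieG_-}(x,y)\cdot z$ by the antisymmetric piece $\tfrac12\bigl(a(x,z)y-a(x,y)z\bigr)$. One must verify that upon substituting $x=\omega_+^{0,1}$, $y=\omega_-$ and taking the $(1,1)$-part of the pullback by $s$, this antisymmetric piece is Dolbeault-exact, so that the symmetric and raw representatives define the same class in $\cohomology[db]{1}{M,T^*M\otimes\operatorname{End}(TM)}$.

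The main obstacle is this final identification. The expected mechanism is the Jacobi identity in $\LieG$ applied with one $\LieG_+$ and two $\LieG_-$ arguments, combined with the structure equation for $d\omega_-$ pulled back by $s$ (itself governed by $\Omega_-=K_-\omega_-^2-\tfrac12\lb{\omega_-}{\omega_-}_-$), which converts the antisymmetric piece into a $\bar\partial$-coboundary involving the soldering form. Once that identity is in hand, the symmetric representative agrees cohomologically with the naturality representative from step two, and the preceding lemma promotes the $a_T$-representative to the desired description of the Atiyah class of $TM$.
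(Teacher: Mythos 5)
Your proposal follows essentially the route the paper intends (the paper gives no written proof of this corollary): the hypothesis that \(G_+\) acts trivially on \(\LieG/\LieH\) makes \(\LieG_-\) a \(\qH\)-module, the soldering form identifies \(TM\cong\amal{\qE}{\qH}{\LieG_-}\), and the preceding lemma applied to this associated bundle of \(\qE\) yields the representative built from \(a(x,y)z\) with \(V=\LieG_-\). The one step you flag as the crux --- that replacing \(a(x,y)z\) by the symmetrized \(a_T(x,y,z)\) does not change the Dolbeault class --- is exactly what the paper dispatches with the sentence immediately following the corollary, ``the symmetry of the tangent bundle Atiyah class is well known.'' You do not need the Jacobi-identity mechanism you sketch (and you have not actually carried it out, so as written your step three is a conditional): since the coefficient bundle \(T^*M\otimes T^*M\otimes TM\) splits \(\qH\)-equivariantly into symmetric and antisymmetric parts, the cohomology splits accordingly, and the antisymmetric component of the class of \(\bar\partial\nabla\) for any \(C^{\infty}\) \((1,0)\)-connection \(\nabla\) on \(TM\) is the class of \(\bar\partial\) applied to the torsion of \(\nabla\), a globally defined section of \(\Lambda^2T^*M\otimes TM\), hence zero. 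Applied to the connection induced by the \(C^{\infty}\) reduction, this shows the antisymmetric piece is exact and the symmetrized representative \(a_T\) gives the same class. So your outline is correct and matches the paper; your third step is a harder, not-yet-completed rederivation of the standard symmetry fact that the paper simply cites.
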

The symmetry of the tangent bundle Atiyah class is well known.
If \(T\subset G_0\) is a Cartan subgroup, then \(T\) has conjugates Zariski dense in \(G_0\).
So the invariant polynomials are determined by their values on \(T\), so expressed in terms of \(T\)-weights.
Relations among the \(T\)-weights acting on \(\LieG_0\) give equations on the Chern classes in Dolbeault cohomology of any holomorphic principal \(G_0\)-bundle.
Our main aim in this paper is to the additional hypothesis of a holomorphic Cartan geometry to improve on those well known equations.
\begin{theorem}\label{theorem:X.G.G.mod}
For any holomorphic \((X,G)\)-geometry, the Atiyah class of any vector bundle associated to any \(\LieG\)-module vanishes, and hence the Chern classes and all of their associated Chern--Simons classes except perhaps \(T_{c_1}\).
\end{theorem}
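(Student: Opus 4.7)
The plan is to leverage the defining feature of a holomorphic Cartan geometry: the Cartan connection \(\omega\) is, by construction, a globally defined holomorphic \((1,0)\)-connection on the extended principal \(G\)-bundle \(E_G=\amal{E}{H}{G}\to M\). My strategy is to propagate this single holomorphic connection to every vector bundle (or ghost bundle) associated to a \(\LieG\)-module, and then read off vanishing of the Atiyah, Chern, and Chern--Simons classes using the Dolbeault representative formulas established in the Atiyah bundle subsection.

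First I would verify that for any \(\LieG\)-module \(V\) with representation \(\rho_V\colon\LieG\to\LieGL{V}\), the form \(\rho_V\circ\omega\) is a holomorphic \((1,0)\)-form on \(E_G\) valued in \(\rho_V(\LieG)\subset\LieGL{V}\) which, by exactly the construction used earlier for \(\LieH\)-modules, plays the role of a holomorphic pseudoconnection form on the (possibly ghost) associated bundle \(\vb{V}\to M\). Invoking the Dolbeault representative \(a(M,\vb{V})=\bigl[(d(\rho_V\circ\omega))^{1,1}\bigr]\) from the Atiyah bundle subsection, the exterior derivative of a holomorphic \((1,0)\)-form is of type \((2,0)\) and therefore has zero \((1,1)\)-component, so the Atiyah class vanishes.

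The Chern forms are \(G_0\)-invariant polynomials in the Atiyah form, hence when computed from a holomorphic connection they are pure \((2k,0)\)-forms; their \((k,k)\)-components, which represent the Chern classes in \(H^{k,k}_{\bar\partial}(M)\), all vanish. For the Chern--Simons forms, \(\CS{f}[E]=\CS{f}(\omega_0,\lb{\omega_0}{\omega_0},\omega_+,\omega_-)\) is polynomial in the holomorphic \((1,0)\)-forms \(\omega_0,\omega_+,\omega_-\) together with their brackets, yielding a pure \((2k-1,0)\)-form on \(E\); any Dolbeault class such a form represents in \(H^{p,q}\) with \(q>0\) is forced to vanish by type.

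The main obstacle is the exceptional case \(T_{c_1}\), where the Chern--Simons form degenerates to \(T_{c_1}[E]=\operatorname{tr}(\omega_0)\), a bare holomorphic \(1\)-form whose Dolbeault class is not automatically killed by the pure-type argument. To isolate this exception cleanly I would examine the descent of \(\operatorname{tr}(\omega_0)\) to \(M\) via a smooth \(G_0\)-reduction of structure group, as in the previous subsection, and verify case by case that in every other degree the type argument really does kill the class after descent; only in the top ``linear'' case \(k=1\) does the Chern--Simons form lack the bracket or Atiyah-form ingredients whose \((1,1)\)-type components would force vanishing.
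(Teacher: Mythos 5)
Your proposal is correct and takes essentially the same route as the paper: the Cartan connection is a holomorphic connection on the extended bundle \(E_G\), so the Atiyah classes (and hence the Chern classes) of bundles associated to \(\LieG\)-modules vanish, while the Chern--Simons forms are of pure type \((2k-1,0)\), so their Dolbeault classes die by type except in the degree-one case \(T_{c_1}\). The paper's own proof is a two-sentence compression of exactly this argument.
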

\begin{proof}
The Atiyah classes are pulled back from \(E_G\), which has the Cartan connection as a holomorphic connection, so all Atiyah classes vanish.
The Chern--Simons classes are \((p+1,p)\) parts of polynomials in the curvature, so vanish except perhaps for \(p=0\), i.e. \(T_{c_1}\).
\end{proof}
Therefore the Atiyah classes of any Cartan geometry modelled on a complex homogeneous space \((X,G)\) lie in the quotient of Atiyah classes of the \(\LieH\)-modules by those of the \(\LieG\)-modules as defined above.
We will adopt a formal cohomology theory below in which the associated formal Atiyah classes of all \(\LieG\)-modules vanish.

\section{Example: projective connections}\label{section:projective.connections}
A \emph{projective connection} is a Cartan geometry modelled on \((X,G)=(\Proj{n},\PSL{n+1})\).
So \(H\subset \PSL{V}\) is the stabilizer of a point of projective space, i.e. the set of matrices (defined up to \(n+1\) root of unity scalar multiple) of the form
\[
\begin{bmatrix}
g^0_0 & g^0_i \\
0 & g^i_j
\end{bmatrix}
\]
with \(i,j=1,2,\dots,n\).
The subgroup \(G_+\subseteq H\) consists of the matrices of the form
\[
\begin{bmatrix}
1 & g^0_i \\
0 & \delta^i_j
\end{bmatrix}.
\]

Recall the conventional notation \cite{Cartan:1992}.
Write the Cartan connection not as \(\omega\) but as \(\Omega\in\Omega^1_E\otimes \LieGL{n+1}\).
Define
\begin{align*}
\omega^i&\defeq\Omega^i_0,\\
\omega^i_j&\defeq\Omega^i_j-\delta^i_j\Omega^0_0,\\
ωi&=\Omega^0_i,
\end{align*}
so that
\begin{align*}
\Omega^0_0&=-\frac{1}{n+1}\omega^i_i,\\
\Omega^i_0&=\omega^i,\\
\Omega^i_j&=\omega^i_j-\frac{\delta^i_j\omega^k_k}{n+1},\\
\Omega^0_i&=ωi.
\end{align*}
In this notation, the structure equations of a projective connection expand out to
\begin{align*}
0&=d\omega^i+\omega^i_j ʌ\omega^j,\\
0&=d\omega^i_j+\omega^i_k\wedge\omega^k_j - (\delta^i_j ωk + \delta^i_k ωj)\wedge\omega^k,\\
0&=dωi-\omega^j_i\wedgeωj
\end{align*}
modulo first and second order torsion, i.e. the curvature of the Cartan connection.

The group \(G=\PSL{n+1}\) acts transitively on all complex bases of tangent spaces of \(\Proj{n}\).
Hence the bundle \(\qE\) is the frame bundle of the manifold \(M\).
Its Atiyah class is represented by
\[
a(TM)=d\omega^i_j+\omega^i_k\wedge\omega^k_j=(\delta^i_j ωk + \delta^i_k ωj)\wedge\omega^k
\]
modulo torsion.
Taking trace gives the first Chern class representation:
\[
2\pi c_1(TM)=(n+1)\sqrt{-1}ωk\wedge\omega^k.
\]

For simplicity of notation, denote \(\LieG_-\) as \(\C[n]\).
So \(\LieG_+=\C[n*]\) and \(\LieG_0=\LieGL{n}\).
Take \(G_0\subset H\) to be the matrices
\[
\begin{bmatrix}
g^0_0 & 0 \\
0 & g^i_j
\end{bmatrix},
\]
giving a Langlands decomposition.
The Atiyah class is represented by an element
\[
a \in \LieG_+^* \otimes\LieG_-^* \otimes \LieG_0
=
\C[n] \otimes \C[n*] \otimes (\C[n] \otimes \C[n*]),
\]
given, for \(x\in\LieG_+=\C[n*]\) and \(y,z\in \C[n]\), by
\[
a(x,y)z=z(x\cdot y)+y(x\cdot z).
\]
The first Chern class in Dolbeault cohomology is represented by the first Chern form
\[
2\pi c_1(x,y)=\sqrt{-1}(n+1)x \cdot y.
\]
Hence
\[
\sqrt{-1}(n+1)a=2\pi I\otimes c_1+2 \pi c_1\otimes I.
\]
We recover:
\begin{theorem}[Molzon and Mortensen \cite{Molzon/Mortensen:1996}]
Take a complex manifold \(M\) with a holomorphic projective connection.
Then, in Dolbeault cohomology,
\[
\sqrt{-1}(1+\dim M)a(TM)=2 \pi I \otimes c_1(TM)+2 \pi c_1(TM)\otimes I.
\]
\end{theorem}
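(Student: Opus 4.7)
The plan is to read the theorem off the infinitesimal computation already displayed in the text, transferring the resulting algebraic identity to Dolbeault cohomology via the corollary that precedes the theorem. There are really three steps: identify the Atiyah form on the model, identify the first Chern form on the model, and verify the tensorial identity; the passage to cohomology is then automatic.

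First I would identify the tangent bundle Atiyah form on the model \((\Proj{n},\PSL{n+1})\). With the Langlands decomposition \(\LieG_0=\LieGL{n}\), \(\LieG_-=\C[n]\), \(\LieG_+=\C[n*]\), the bracket in \(\LieSL{n+1}\) between an upper block \(x\in\LieG_+\) and a lower block \(y\in\LieG_-\) has \(\LieG_0\)-component given by the endomorphism \(z\mapsto (x\cdot y)\,z+(x\cdot z)\,y\) of \(\C[n]\). Substituting into the defining formula \(a(x,y)=-\rho\circ\proj[\LieG_0]{\lb{x}{y}}\) recovers \(a(x,y)z=(x\cdot y)z+(x\cdot z)y\), matching the expression already displayed.

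Next, taking the trace of \(a(x,y)\) as an endomorphism of \(\C[n]\) gives \(n(x\cdot y)+(x\cdot y)=(n+1)(x\cdot y)\), so \(2\pi c_1(x,y)=\sqrt{-1}(n+1)(x\cdot y)\). Multiplying the Atiyah form by \(\sqrt{-1}(n+1)\) and rewriting, the two summands \(\sqrt{-1}(n+1)(x\cdot y)z\) and \(\sqrt{-1}(n+1)(x\cdot z)y\) become \(2\pi c_1(x,y)z\) and \(2\pi c_1(x,z)y\) respectively, which is precisely the content of the tensorial identity \(\sqrt{-1}(n+1)a=2\pi I\otimes c_1+2\pi c_1\otimes I\) written out on an arbitrary \(z\in\LieG_-\).

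Finally, I would invoke the corollary immediately preceding the theorem: both sides of this identity represent, in Dolbeault cohomology, the \((1,1)\)-part of the pullback along any \smooth \(G_0\)-reduction of structure group of the corresponding infinitesimal characteristic form. Equality of the forms on the model therefore passes, term by term, through pullback, projection to the \((1,1)\)-part, and descent to cohomology, yielding the stated equation. There is no genuine obstacle to overcome; the only real care required is checking that the abstract tensor notation \(I\otimes c_1+c_1\otimes I\) matches the explicit pointwise action on \(z\) recorded above.
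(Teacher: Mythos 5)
Your proposal is correct and follows essentially the same route as the paper: the paper also reads the theorem off the infinitesimal Atiyah form \(a(x,y)z=z(x\cdot y)+y(x\cdot z)\) on the model, takes the trace to get \(2\pi c_1(x,y)=\sqrt{-1}(n+1)\,x\cdot y\), deduces the tensorial identity \(\sqrt{-1}(n+1)a=2\pi I\otimes c_1+2\pi c_1\otimes I\), and passes to Dolbeault cohomology via the \((1,1)\)-part of the pullback along a \smooth\ \(G_0\)-reduction. The only cosmetic difference is that the paper also records the same computation in Cartan's structure-equation notation \(a(TM)=(\delta^i_j\,ω{_k}+\delta^i_k\,ω{_j})\wedge\omega^k\) before restating it invariantly, which your argument omits without loss.
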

Conversely, Molzon and Mortensen prove that if this condition is satisfied, then the complex manifold admits a holomorphic projective connection, which can be chosen to be normal.
\begin{corollary}
A holomorphic projective connection on a complex manifold arises from a holomorphic affine connection just when the complex manifold has \(c_1=0\) in Dolbeault cohomology.
\end{corollary}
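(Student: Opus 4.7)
The plan is to deduce both implications from the Molzon--Mortensen relation stated just above, combined with Atiyah's criterion \cite{Atiyah1957} that a holomorphic vector bundle admits a holomorphic connection if and only if its Atiyah class vanishes in Dolbeault cohomology.

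For the forward implication, suppose the projective connection arises from a holomorphic affine connection \(\nabla\), i.e.\ from a holomorphic connection on the frame bundle \(\qE \to M\). Its curvature is a holomorphic \((2,0)\)-form valued in \(\operatorname{End}(TM)\), so the first Chern form is a holomorphic \((2,0)\)-form. By the lemma identifying Dolbeault representatives of characteristic classes, the \((1,1)\)-part of this form represents \(c_1(TM)\) in Dolbeault cohomology, and in this case that part is zero.

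For the reverse implication, assume \(c_1(TM)=0\) in Dolbeault cohomology. The Molzon--Mortensen identity then forces \(a(TM)=0\) in \(\cohomology{1}{M,\Omega^1_M\otimes\operatorname{End}(TM)}\), and Atiyah's criterion supplies a holomorphic connection on \(TM\). To upgrade this to a holomorphic affine connection \emph{in the given projective class}, I would invoke the classical correspondence between affine connections projectively equivalent to a fixed projective structure and holomorphic connections on the density line bundle \(K_M\): since \(c_1(K_M)=-c_1(TM)=0\) in Dolbeault cohomology, Atiyah again gives a holomorphic connection on \(K_M\), and this connection canonically determines a holomorphic \(G_0\)-reduction of the principal \(H\)-bundle \(E\to M\). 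Pulling the component \(ω0\) of the Cartan connection back along this reduction produces a holomorphic \(\LieG_0\)-valued connection form on \(\qE\), i.e.\ a holomorphic affine connection inducing the given projective Cartan geometry.

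The main obstacle is this last compatibility step: Molzon--Mortensen and Atiyah immediately yield \emph{some} holomorphic affine connection on \(M\), but producing one that lies in the given projective equivalence class requires the \(K_M\)-reduction above. The details of that reduction follow by unwinding the \(G_+\)-equivariance formulas for the components of \(\omega\) recorded in subsection \ref{subsection:ConnBundlesShapes}.
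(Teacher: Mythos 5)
The paper states this corollary without proof, immediately after the Molzon--Mortensen theorem, so there is no argument in the text to compare against; judged on its own, your proposal is essentially sound. The forward direction is fine (and can be said even faster: a holomorphic connection on \(TM\) kills the Atiyah class, hence all Dolbeault Chern classes, in particular \(c_1\)). For the converse you correctly observe that Molzon--Mortensen plus Atiyah's criterion only produce \emph{some} holomorphic connection on \(TM\), with no compatibility with the given projective class, and that the real content is the reduction via the canonical bundle; in fact that first step is logically superfluous, since the entire converse follows from the \(K_M\) argument alone. The one claim you assert rather than establish is the crux: that a holomorphic connection on \(K_M\) canonically determines a holomorphic \(G_0\)-reduction of \(E\to M\). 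In the paper's framework the clean statement is that \(E/G_0\to M\) is a holomorphic affine bundle modelled on \(T^*M\) (the conjugation action of \(G_0\) on \(G_+\cong\C[n*]\) is the standard dual representation twisted so as to match the coframe), as is the connection bundle \(\Conn{K_M}\), and the natural map \(E/G_0\to\Conn{K_M}\) sending a local compatible affine connection to the connection it induces on \(\det T^*M\) is affine over multiplication by the nonzero scalar \(\pm(n+1)\) on the model \(T^*M\), hence an isomorphism of affine bundles over \(M\); therefore a holomorphic section of one exists iff a holomorphic section of the other does, and \(\Conn{K_M}\) admits a holomorphic section iff \(a(K_M)=0\) iff \(c_1=0\) in Dolbeault cohomology. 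With that isomorphism supplied (which is exactly the ``unwinding of the \(G_+\)-equivariance formulas'' you defer), your proof is complete.
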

The bundle \(\map{\OO(1)}{\Proj{n}}\) is acted on by \(\SL{n+1}\), but \emph{not} by \(\PSL{n+1}\).
Let \(\hat{H}\subset \SL{n+1}\) be the set of matrices of the form
\[
g=
\begin{pmatrix}
g^0_0 & g^0_i \\
0 & g^i_j
\end{pmatrix}
\]
and let
\[
\rho(g)=\frac{1}{g^0_0}.
\]
For any integer \(d\), let \(\C_d\) be \(\C\) but with \(\hat{H}\)-module structure imposed by \(\rho_d\defeq \rho^d\).
Let \(\OO(d)\) be the associated line bundle \(\OO(d)=\defeq\SL{n+1} \times^{\rho}\C_d\).
The group \(H\subset\PSL{n+1}\) is the image of \(\hat{H}\subset\SL{n+1}\), i.e. \(\hat{H}\) modulo \((n+1)\)-roots of unity.
Hence \(\C_d\) is an \(H\)-module just when \(n+1\) divides \(d\).
The subalgebra of \(\LieH\) acting trivially on \(\C_d\) contains \(\LieG_+\), so \(\C_d\) is an \(\LieqH=\LieH/\LieG_+\)-module.
For a projective connection,  when \(n+1\) does not divide \(d\), \(\OO(d)\) is not an associated bundle.
By our general theory of characteristic classes, it still has an Atiyah class \(a(\OO(d))\), and Chern classes \(c_k(\OO(d))\).
Hence it is convenient to write this \(\LieqH\)-module \(\C_d\) also as \(\OO(d)\).

Recall \(V\defeq\C[n+1]\) is the obvious \(\LieG\)-module as \(\LieG=\LieSL{n+1}\); note that this is not a trivial representation.
By \vref{theorem:X.G.G.mod}, the Atiyah class of the vector bundle \(\vb{V}\) associated to \(V\) vanishes on any projective connection geometry.
So we can work modulo this Atiyah class.
The theory above describing Atiyah forms does not apply to \(\vb{V}\), as it has nontrivial \(G_+\)-action, so is not defined on \(\qE\).

Note that \(\OO(-1)\subset V\) is an \(\hat{H}\)-submodule and tensoring gives \(\OO\subset V\otimes\OO(1)\).
Recall the Euler sequence on \(\Proj{n}\):
\[
0 \to \OO \to V\otimes \OO(1) \to T \to 0
\]
where \(T=\LieG/\LieH\).

Let us rewrite this directly in terms of a projective connection on a manifold:
\begin{align*}
a_{V}(M)
&=
\rho_V(\nabla\omega),
\\
&=
\nabla\Omega-
\begin{pmatrix}
0&0\\
\nabla\omega^i&0
\end{pmatrix},
\\
&=
0-0
\end{align*}
up to torsion terms.

Let us consider this vanishing more geometrically.
Locally (say on a cover by open sets \(M_a\subset M\)), we can lift any \(H\)-bundle \(\map{E}{M}\) to a \(\hat{H}\)-bundle \(\map{E_a}{M_a}\) and in doing so lift the projective connection to a holomorphic Cartan geometry modelled on \((\Proj{n},\hat{G})\) where \(\hat{G}\defeq\SL{n+1}\).
So \(\Omega\) is a holomorphic connection for the associated vector bundle \(\amal{E_a}{\hat{H}}{\C[n+1]}\).
Indeed it suffices to note that \(V\) is a \(\hat{G}\)-module, and so the Cartan connection extends to a holomorphic connection on the associated \(\hat{G}\)-bundle.
So the Atiyah class vanishes: \(a_V=0\) on each \(E_a\).
But the Atiyah classes in these calculations, as ghost vector bundle Atiyah classes, are defined on \(E\) and pull back to each \(E_a\) to have this same representation as a differential form, and so \(a_V=0\) on \(E\).

Apply the splitting principle to the Euler sequence:
\[
c(V\otimes\OO(1))=c(\OO(1)[n+1])=(1+c_1\OO(1))^{n+1}=c(T)
\]
in Dolbeault cohomology and therefore
\[
(n+1)c_1(\OO(1))=c_1(T).
\]
\begin{theorem}
On any \(n\)-dimensional complex manifold with holomorphic projective connection,
\[
(n+1)^kc_k(T)=\binom{n+1}{k}c_1^k(T)
\]
in Chern forms modulo torsion and curvature, and hence also as Chern classes in Dolbeault cohomology.
Equivalently
\[
1+\ch{}=(n+1)e^{c_1/(n+1)}
\]
in Chern forms modulo torsion and curvature, and hence also as Chern classes in Dolbeault cohomology.
\end{theorem}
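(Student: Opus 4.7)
The plan is to combine the Euler sequence displayed above the theorem with the vanishing of $\vb{V}$'s Atiyah class (from \vref{theorem:X.G.G.mod}) and the splitting principle stated in the section on infinitesimal characteristic forms. The statement already exhibits $(1+c_1\OO(1))^{n+1}=c(T)$; what remains is really only to extract the $k$th graded piece and rewrite it in terms of $c_1(T)$ alone, together with the parallel computation for $\operatorname{ch}$.

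First I would justify the identity $c(V\otimes\OO(1))=(1+c_1\OO(1))^{n+1}$ carefully at the level of Chern forms. The point is that \vref{theorem:X.G.G.mod} says $a_V=0$ on the nose (as a form on $E$, modulo torsion and curvature), and all higher Chern forms of $V$ are polynomials in $a_V$ with no constant term, so $c(V)=1$ identically in this formal calculus. Writing $V\otimes\OO(1)$ and applying the splitting principle formally to $V$ as a sum of $n+1$ ``virtual'' trivial line bundles, each twisted by $\OO(1)$, gives $c(V\otimes\OO(1))=(1+c_1\OO(1))^{n+1}$. Combined with the Euler sequence $0\to\OO\to V\otimes\OO(1)\to T\to0$ and $c(\OO)=1$, the splitting principle yields $c(T)=(1+c_1\OO(1))^{n+1}$.

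Next I would take the degree-one part: $c_1(T)=(n+1)\,c_1\OO(1)$, hence $c_1\OO(1)=c_1(T)/(n+1)$. Substituting into the degree-$k$ piece of $(1+c_1\OO(1))^{n+1}$ gives
\[
c_k(T)=\binom{n+1}{k}c_1\OO(1)^{k}=\binom{n+1}{k}\frac{c_1(T)^{k}}{(n+1)^{k}},
\]
which is the first stated identity after clearing $(n+1)^k$. For the Chern character formulation, note $\operatorname{ch}(V\otimes\OO(1))=(n+1)e^{c_1\OO(1)}$ because the Chern character is additive on short exact sequences and $V$ has $\operatorname{ch}(V)=n+1$; then $\operatorname{ch}(T)=\operatorname{ch}(V\otimes\OO(1))-\operatorname{ch}(\OO)=(n+1)e^{c_1(T)/(n+1)}-1$, which rearranges to $1+\operatorname{ch}(T)=(n+1)e^{c_1(T)/(n+1)}$. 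Both identities pass from forms-modulo-torsion-and-curvature to Dolbeault classes because, by the preceding lemma, Chern classes are represented by the $(k,k)$-parts of pullbacks of these forms under any \smooth section $s$, and the algebraic identities above are preserved under pullback and projection to bidegree $(k,k)$.

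The only genuinely delicate step is the first: verifying that the splitting principle as stated in the excerpt applies at the level of Atiyah \emph{forms} computed from the Cartan connection, with torsion and curvature absorbed into the error. This is really where the content of \vref{theorem:X.G.G.mod} is used — it upgrades the vanishing of $c_k(V)$ in cohomology to the vanishing as forms on $E$ modulo the correct ideal, and it is what permits the splitting-principle multiplication to be performed without leaving any residual terms that could obstruct the final equality. Once that is in hand, the rest is a one-line binomial expansion and a one-line exponential expansion.
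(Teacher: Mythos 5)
Your proposal is correct and follows essentially the same route as the paper: the identity \(c(T)=(1+c_1\OO(1))^{n+1}\) is obtained there, exactly as you do, from the Euler sequence, the vanishing of \(a_V\) supplied by \vref{theorem:X.G.G.mod}, and the splitting principle, after which the binomial identity is immediate from \((n+1)c_1(\OO(1))=c_1(T)\). The only divergence is minor: the paper verifies the Chern character form of the statement by a direct index computation with the representative \(a^i_j=\delta^i_j\,\omega_k\wedge\omega^k+\omega_j\wedge\omega^i\) in the structure equations (which doubles as a check that the identity holds at the level of forms modulo torsion and curvature), whereas you deduce it from multiplicativity and additivity of the Chern character; both are valid.
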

We can compute this more directly in the structure equations, using the Chern character, as
\[
(-2\pi i)c_1 = (n+1)ωi\wedge\omega^i.
\]
(Note that we actually compute such things formally, as polynomials in the formal Atiyah class, but we can do so as if we are working with the differential forms of the structure equations.)
It helps to write \(\omega\indices{^i^j}\) to mean \(\omega^i\wedge\omega^j\), \(\omega\indices{_i^j}\) to mean \(ωi\wedge\omega^j\), and so on.
Similarly \(a^{ij}_{k\ell}\) means \(a^i_ja^k_{\ell}\) and so on.
Set \(\ch{j}\defeq \ch{j}[T]\) to be the Chern character components,
\begin{align*}
(-2\pi i)^j j!\ch{j}
&=
a^{i_1 i_2 \dots i_{j-1} i_j}_{i_2 i_3 \dots i_{j\phantom{-1}} i_1},\\
&=
(\delta^{i_1}_{i_2}\omega\indices{_k^k}+\omega\indices{_{i_2}^{i_1}})
\dots
(\delta^{i_j}_{i_1}\omega\indices{_k^k}+\omega\indices{_{i_1}^{i_j}}),\\
&=
n(\omega\indices{_k^k})^j\\
&\qquad+
\sum_{p=0}^{j-1}
\binom{j}{p}
(\omega\indices{_k^k})^p
\omega^{\phantom{i_2}i_1\phantom{i_3}i_2\dots\phantom{i_1}i_{j-p}}_{i_2\phantom{i_1}i_3\phantom{i_2}\dots i_1\phantom{i_{j-p}}},\\
&=
n(\omega\indices{_k^k})^j\\
&\qquad+
\sum_{p=0}^{j-1}
\binom{j}{p}
(\omega\indices{_k^k})^p(-1)^{j-p}
\omega^{i_1\phantom{i_2}i_2\phantom{i_3}\dots i_{j-p}}_{\phantom{i_1}i_2\phantom{i_1}i_3\dots \phantom{i_{j-p}}i_1},\\
&=
n(\omega\indices{_k^k})^j\\
&+\qquad
\sum_{p=0}^{j-1}
\binom{j}{p}
(\omega\indices{_k^k})^p(-1)^{j-p+1}
\omega^{\phantom{i_1}i_1\phantom{i_2}i_2\dots\phantom{i_{j-p}}i_{j-p}}_{i_1\phantom{i_1}i_2\phantom{i_2}\dots i_{j-p}\phantom{i_{j-p}}},\\
&=
\left(
n
+
\sum_{p=0}^{j-1}
\binom{j}{p}
(-1)^{j-p+1}
\right)(\omega\indices{_k^k})^j
,\\
&=
\left(
n
+
1\right)
(\omega\indices{_k^k})^j
\end{align*}
so that
\[
\frac{\ch{j}}{n+1}=\frac{\left(\frac{\ch{1}}{n+1}\right)^j}{j!}
\]
for \(j=1,2,\dots\).
So finally
\[
1+\ch{}=(n+1)e^{c_1/(n+1)}.
\]
\subsection{Chern--Simons forms}
As a symmetric polynomial on \(\LieqH\), let
\[
\chp{j}\defeq \ch{j}-\frac{(n+1)}{j!}\left(\frac{c_1}{n+1}\right)^j.
\]
We have found above that \(\chp{j}=0\) in Dolbeault cohomology on any complex manifold with holomorphic projective connection.
Hence \(T_{\chp{j}}\) drops to define a class in Dolbeault cohomology, the associated Chern--Simons class, on any complex manifold with a holomorphic projective connection.
\begin{theorem}
If a complex manifold \(M\) of complex dimension \(n\) admits a holomorphic projective connection then in Dolbeault cohomology \(0=\chp{2}=\dots=\chp{n}\) and
\[
\sum_{j=2}^n \binom{j}{2}T_{\chp{j}}
=
T_{\chp{2}} ʌe^{c_1/(n+1)}.
\]
\end{theorem}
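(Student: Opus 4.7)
The vanishing $\chp{j} = 0$ in Dolbeault cohomology for $j = 2, \ldots, n$ is immediate from the preceding identity $1 + \ch{} = (n+1)e^{c_1/(n+1)}$, established above as Chern forms modulo torsion and curvature; set $\phi \defeq c_1/(n+1)$ throughout.

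For the Chern--Simons identity, my plan is to split it degree by degree. The form $\CS{\chp{j}}$ has total degree $2j-1$, while $\CS{\chp{2}} \wedge \phi^k$ has degree $3+2k$, so matching degrees reduces the identity to showing, for each $j = 2, \ldots, n$,
\[
\binom{j}{2}\, \CS{\chp{j}} = \frac{1}{(j-2)!}\, \CS{\chp{2}} \wedge \phi^{j-2}
\]
in Dolbeault cohomology, equivalently $j!\, \CS{\chp{j}} = 2\, \CS{\chp{2}} \wedge \phi^{j-2}$.

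The structural input is an algebraic identity for the tangent Atiyah form on a projective connection. From $a(x,y)z = z(x\cdot y) + y(x\cdot z)$ one computes, using commutativity of wedges of $2$-forms, that matrix-wedge squaring collapses: $a \wedge a = (\omega_k \wedge \omega^k)\, a$ as a matrix-valued $4$-form, so all higher wedge powers reduce to $a^{\wedge j} = (\omega_k \wedge \omega^k)^{j-1}\, a$, whose trace gives $(n+1)(\omega_k \wedge \omega^k)^j$, recovering $\chp{j} = 0$ as a Chern form. The plan is to substitute this collapse into the Chern--Simons formula applied to $f = \ch{j}$ and to $f = (n+1)\phi^j/j!$: each monomial in the expansion reduces, via the collapse, to a single factor of $a$ multiplied by powers of $\phi$, $\omega_0$, and $[\omega_0, \omega_0]$, and the difference $\CS{\chp{j}}$ then factors as $\phi^{j-2}$ times a piece independent of $j$, which is identified as $(2/j!)\, \CS{\chp{2}}$ from the base case $j=2$.

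The main obstacle is the combinatorial bookkeeping: the coefficients $a_\ell$ in the Chern--Simons formula, combined with the binomial expansions produced by the collapse $a \wedge a = (\omega_k \wedge \omega^k)\,a$, must sum to give precisely the factor $2/j!$. An alternative, more conceptual route is to exploit the Euler sequence $0 \to \OO \to \vb{V}\otimes\OO(1) \to T \to 0$ with $V = \C[n+1]$ the standard $\LieG$-module of $\LieSL{n+1}$: by \vref{theorem:X.G.G.mod} all Chern--Simons classes of $\vb{V}$ vanish except $\CS{c_1}$, so the Chern--Simons behaviour of $T$ is entirely determined by that of the line bundle $\OO(1)$, and standard line-bundle transgression identities should then yield the claimed formula directly.
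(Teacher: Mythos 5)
Your strategy coincides with the paper's own: the vanishing of \(\chp{j}\) is read off from the Chern-form identity already established, the Chern--Simons identity is split by bidegree into \(\binom{j}{2}T_{\chp{j}}=T_{\chp{2}}\wedge\phi^{j-2}/(j-2)!\) with \(\phi\defeq c_1/(n+1)\), and that is proved by direct computation with the explicit Atiyah form \(a^i_j=\delta^i_j\,\omega_k\wedge\omega^k+\omega_j\wedge\omega^i\) of a projective connection. Your collapse identity is correct and is exactly the mechanism behind the paper's index telescoping: with \(\sigma\defeq\omega_k\wedge\omega^k\) one finds \((a\wedge a)^i_j=\delta^i_j\sigma^2+2\,\sigma\wedge\omega_j\wedge\omega^i-\sigma\wedge\omega_j\wedge\omega^i=\sigma\wedge a^i_j\), hence \(a^{\wedge(j-1)}=\sigma^{j-2}\,a\).

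What you defer as ``combinatorial bookkeeping'' is, however, the entire content of the proof, and the way you describe it suggests you have not yet seen why it closes. Two observations finish it. First, the coefficients \(a_\ell\) with \(\ell\ge1\) in the transgression formula never enter: the term \(f(\omega_0,v^{\ell},a^{j-1-\ell})\) with \(v=\lb{\omega_0}{\omega_0}\) has antiholomorphic degree at most \(j-1-\ell\), since \(\omega_0\) and \(v\) pull back by a smooth reduction to forms of type \((1,0)\) and \((2,0)\); so only the \(\ell=0\) term \(f(\omega_0,a,\dots,a)\) contributes to the \((j,j-1)\) part, which is the only part defining the Dolbeault Chern--Simons class. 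Second, once only that term remains, cyclic symmetry of the trace puts the \(j-1\) factors of \(a\) adjacent, and your collapse gives at once
\[
\operatorname{tr}\of{\omega_0\wedge a^{\wedge(j-1)}}=\sigma^{j-2}\wedge\operatorname{tr}\of{\omega_0\wedge a},
\]
i.e.\ the case \(j=2\) times \(\sigma^{j-2}=\pr{-2\pi i\,c_1/(n+1)}^{j-2}\); no binomial sums need to be controlled (the paper's binomial sums appear only because it expands each factor of \(a\) into its two summands instead of using the matrix identity). You also still need the companion computation \(T_{c_1^j}=T_{c_1}\wedge c_1^{j-1}\) to pass from \(\ch{j}\) to \(\chp{j}\); your bidegree splitting assumes it implicitly. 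Finally, your alternative route through the Euler sequence is too vague to count as a proof: Chern--Simons transgressions are not Whitney-multiplicative without correction terms, and \vref{theorem:X.G.G.mod} leaves \(T_{c_1}\) of \(\vb{V}\) possibly nonzero, so the claim that the transgression of \(T\) is ``entirely determined'' by \(\OO(1)\) itself requires an argument.
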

\begin{proof}
Note that the Atiyah class of a projective connection is
\[
a^i_j=\delta^i_jωk\wedge\omega^k+ωj\wedge\omega^i.
\]
Hence the Chern--Simons invariant is
\begin{align*}
j(-2\pi i)^j j! \CS{\ch{j}}
&=
\omega^{i_1}_{i_2}ʌa^{i_2}_{i_3} ʌ\dots ʌa^{i_j}_{i_1}\\
&\qquad+
a^{i_1}_{i_2}ʌ\omega^{i_2}_{i_3} ʌ\dots ʌa^{i_j}_{i_1}\\
&\qquad
+
\dots\\
&\qquad
+
a^{i_1}_{i_2}ʌa^{i_2}_{i_3} ʌ\dots ʌ\omega^{i_j}_{i_1},
\\
&=
\omega^{i_1}_{i_2}ʌa^{i_2}_{i_3} ʌ\dots ʌa^{i_j}_{i_1}\\
&\qquad
+
\omega^{i_2}_{i_3} ʌ a^{i_3}_{i_4} ʌ\dots ʌa^{i_j}_{i_1}ʌa^{i_1}_{i_2}\\
&\qquad
+
\dots\\
&\qquad
+
\omega^{i_j}_{i_1} ʌa^{i_1}_{i_2}ʌa^{i_2}_{i_3} ʌ\dots ʌa^{i_{j-1}}_{i_j},
\\
&=
j\omega^{i_1}_{i_2}ʌa^{i_2}_{i_3} ʌ\dots ʌa^{i_j}_{i_1}.
\end{align*}
Sum over over all ways to pick \(p\) of the Atiyah class expressions (of which there are \(j-1\) in all) to expand out as \(\delta^{i_{\ell-1}}_{i_{\ell}}ωk\wedge\omega^k\) and \(j-1-p\) to expand out as \(ω{i_{\ell}}\wedge\omega^{i_{\ell-1}}\):
\begin{align*}
(-2\pi i)^j j!\CS{\ch{j}}
&=
\sum_{p=0}^{j-1}\binom{j-1}{p}(\omega\indices{_k^k})^p
ʌ\omega^{i_1\phantom{i_3} i_2 \dots \phantom{i_1} i_{j-p}}_{i_2 i_3 \phantom{i_2} \dots i_1\phantom{i_{j-p}}}
\\
&=
(\omega\indices{_k^k})^{j-1}ʌ\omega^i_i
+
\sum_{p=0}^{j-2}\binom{j-1}{p}(-1)^{j-p-1}(\omega\indices{_k^k})^p
ʌ\omega^{i_1i_2\phantom{i_3}  \dots i_{j-p}\phantom{i_1} }_{i_2 \phantom{i_2} i_3 \dots \phantom{i_{j-p}}i_1}
\\
&=
(\omega\indices{_k^k})^{j-1}ʌ\omega^i_i
+
\left(\sum_{p=0}^{j-2}\binom{j-1}{p}(-1)^{j-p-1}\right)(\omega\indices{_k^k})^{j-2}
ʌ\omega^{i_1i_2\phantom{i_3}}_{i_2\phantom{i_2}i_1}
\\
&=
(\omega\indices{_k^k})^{j-2}
ʌ(\omega^{i\phantom{k}k}_{ik\phantom{k}}+\omega^{i\phantom{i}\ell}_{\ell i\phantom{\ell}})
\\
&=
\left(\frac{-2\pi i c_1}{n+1}\right)^{j-2}2(-2\pi i)^2\CS{\ch{2}}.
\end{align*}
Dividing off \(-2\pi i\) factors:
\[
j!T_{\ch{j}}
=
2T_{\ch{2}}\left(\frac{c_1}{n+1}\right)^{j-2},
\]
which we can write as
\[
\binom{j}{2}T_{\ch{j}}
=
T_{\ch{2}}ʌ\frac{1}{(j-2)!}\left(\frac{c_1}{n+1}\right)^{j-2}.
\]
Similarly
\[
T_{\ch{1}^j} = T_{c_1}ʌc_1^{j-1},
\]
from which the result follows.
\end{proof}

\section{\texorpdfstring{Example: $1$-flat foliations}{Example: 1-flat foliations}}
Consider a complex manifold \(M\) bearing a holomorphic projective connection and a holomorphic subbundle of the tangent bundle.
As a model, fix a projective linear subspace \(\Proj{p-1} \subset \Proj{p+q}\) and let \(X=\Proj{p+q}-\Proj{p-1}\).
Foliate by taking all \(p\)-dimensional projective subspaces containing \(\Proj{p-1}\).
Write \(\Proj{p-1}\) as the projectivization of the linear subspace of vectors
\[
\begin{pmatrix}
0\\
x^i\\
0
\end{pmatrix}.
\]
The group \(G\) of all projective transformations preserving \(\Proj{p-1}\) acts transitively on \(X\) fixing that foliation, consisting of elements
\[
\begin{bmatrix}
g^0_0&0&g^0_J\\
g^i_0&g^i_j&g^i_J\\
g^I_0&0&g^I_J
\end{bmatrix}\in\PSL{p+q+1}.
\]
The reader familiar with the method of equivalence will see that this model emerges from application of the method \cite{Gardner:1989}, being the unique example of maximal dimensional symmetry group of a holomorphic projective connection with a holomorphic subbundle of the tangent bundle.
We take the projective connection \(\map{E_1}{M}\), and consider the subbundle \(E_2\subset E_1\) on which the soldering form identifies elements of the given vector subbundle \(V\subset TM\) with elements of \(\C[p]\oplus 0\subset \C[p+q]\).
On this subbundle, \(\omega^I_i\) vanishes on the vertical directions, so is semibasic, say \(\omega^I_i=t^I_{ij}\omega^j+t^I_{iJ}\omega^J\).
The antisymmetric part \(t^I_{ij}-t^I_{ji}\) is the obstruction to \(V\) being a foliation.
The symmetric part remaining is the shape operator; \cite{Rovenskii:1998} p. 31.
The reduction of structure group following the method of equivalence is not of constant type in general, so to ensure constant type we insist that will only study totally geodesic holomorphic foliations, i.e. \(t^I_{ij}=0\).

Differentiating the equation \(\omega^I_i=t^I_{ij}\omega^j+t^I_{iJ}\omega^J\), we find that \(dt^I_{iI} = qωi+t^I_{jI}\omega^j_i\) modulo semibasic terms.
The set of points \(E\subset E_1\) at which \(t^I_{iI}=0\) is a principal \(H\)-subbundle, where \(H\) is the set of all elements of \(G\) of the form
\[
\begin{bmatrix}
g^0_0 &     0 & g^0_J \\
    0 & g^i_j & g^i_J \\
    0 &     0 & g^I_J
\end{bmatrix}.
\]
Consider the subgroup \(G_0\) of elements of the form
\[
\begin{bmatrix}
g^0_0 &     0 & 0 \\
    0 & g^i_j & 0 \\
    0 &     0 & g^I_J
\end{bmatrix}
\]
and the subgroup \(G_+\) of elements of the form
\[
\begin{bmatrix}
    1 &     0 & g^0_J \\
    0 & \delta^i_j & g^i_J \\
    0 &     0 & \delta^I_J
\end{bmatrix}.
\]
Since our model has no further reduction of structure group, the method of equivalence finishes at \(E\).
We are inclined to pretend that this is a Cartan geometry with Cartan connection
\[
\omega=ω-\oplusω0\oplusω+
=
(\omega^i,\omega^I)\oplus(\omega^i_j,\omega^I_J)\oplus(ωI)
\]
with model \((X,G)\) as above.
It might not be a Cartan connection, because the curvature might not be semibasic.

We compute that
\begin{align*}
d\omega^i_j+\omega^i_k\wedge\omega^k_j
&=
\delta^i_jωK\wedge\omega^K
-t^J_{jk}\omega^i_J\wedge\omega^k
-t^J_{jK}\omega^i_J\wedge\omega^K,\\
d\omega^I_J+\omega^I_K\wedge\omega^K_J
&=
\delta^I_JωK\wedge\omega^K
+t^I_{kL}\omega^k_J\wedge\omega^L,\\
\end{align*}
modulo semibasic terms, i.e. modulo the various \(\omega^k\wedge\omega^{\ell}\).
We see that our theory will not allow to compute equations on the characteristic classes of the \(H/G_+\)-bundle unless \(0=t^I_{jk}=t^I_{jK}\), i.e. the vector subbundle \(V\subset TM\) is a foliation \(F\) with totally geodesic leaves and also the invariant \(t^I_{iJ}\), a \(1\)-form on the leaves of the foliation valued in traceless endomorphisms of the normal bundle, must vanish.
Such foliations we call \emph{\(1\)-flat} for the projective connection.

Every \(1\)-flat foliation is a Cartan geometry modelled on \((X,G)\), with
\begin{align*}
d
\begin{pmatrix}
\omega^i \\
\omega^I
\end{pmatrix}
&=
-
\begin{pmatrix}
\omega^i_j & \omega^i_J \\
0 & \omega^I_J
\end{pmatrix}
\wedge
\begin{pmatrix}
\omega^j \\
\omega^J
\end{pmatrix},
\\
d
\begin{pmatrix}
\omega^i_j & \omega^i_J \\
0 & \omega^I_J
\end{pmatrix}
&=
-
\begin{pmatrix}
\omega^i_k & \omega^i_K \\
0 & \omega^I_K
\end{pmatrix}
\wedge
\begin{pmatrix}
\omega^k_j & \omega^k_J \\
0 & \omega^K_J
\end{pmatrix}
\\
&\qquad+
\begin{pmatrix}
\delta^i_j ωK ʌ\omega^K & ωJ ʌ\omega^i \\
0 & (ωJ \delta^I_K + ωK \delta^I_J)\wedge\omega^K
\end{pmatrix}
\end{align*}
modulo semibasic terms.

Since a \(1\)-flat foliation occurs on a complex manifold with holomorphic projective connection, it has the same characteristic class and Chern--Simons equations we found before.
But we will also find some new ones.
The total Chern class of the tangent bundle \(TF\) of the foliation \(F\) in Dolbeault cohomology is
\[
c(TF)=\det\of{I-\frac{\sqrt{-1}}{2 \pi} \delta^i_j ωK ʌ\omega^K}
=\pr{1-\frac{\sqrt{-1}}{2 \pi} ωK ʌ\omega^K}^p
\]
so that
\[
c_k(TF)
=
\binom{p}{k}c_1(TF)^k
\]
for \(k=1,2,\dots,q\).
We also see that
\[
c_1(TF)=-\frac{i}{2\pi}pωK ʌ\omega^K
\]
and
\[
c_1(TM/TF)=-(q+1)\frac{i}{2\pi}ωK ʌ\omega^K
\]
so that
\[
p \, c_1(TM/TF) = (q+1) \, c_1(TF),
\]
and so
\[
c_1(TM/TF)^{q+1}=c_1(TF)^{q+1}=0.
\]
In terms of \(c_1(TM)\),
\[
c_1(TM)=c_1(TF)+c_1(TM/TF),
\]
giving
\begin{align*}
c_k(TF)&=\binom{p}{k}\left(\frac{p}{p+q+1}c_1(TM)\right)^k,\\
c_k(TM/TF)&=\binom{q+1}{k}\left(\frac{q+1}{p+q+1}c_1(TM)\right)^k.
\end{align*}

Take any \(\GL{q}\)-invariant polynomial \(P\) of degree \(\ge q+1\), perhaps valued in a finite dimensional holomorphic \(\GL{q}\)-module.
Write, as above,
\[
\nabla \omega^I_J = d\omega^I_J + \omega^I_K ʌ\omega^K_J.
\]
We find
\[
P(\nabla \omega^I_J)=P(\delta^I_J ωK ʌ\omega^K+ωJ\wedge\omega^I),
\]
modulo torsion, expands out to have more than \(q\) \(1\)-forms \(\omega^K\) in each term.
But there are only \(q\) such \(1\)-forms, so 
\[
P(\nabla \omega^I_J)=0
\]
modulo torsion.
The Chern--Simons form is then
\[
\CS{P}=P(\omega^I_J,\nabla \omega^I_J,\dots,\nabla \omega^I_J)
\]
which also vanishes, modulo torsion, if there are more than \(q\) \(1\)-forms \(\omega^K\) in each term, i.e. if \(P\) has degree \(q+2\) or more.
We recover the Baum--Bott theorem \cite{Baum/Bott:1972} p. 287 for our holomorphic folation, with results of Kamber and Tondeur \cite{Kamber/Tondeur:1975}:
\begin{theorem}
Take a complex manifold with a holomorphic projective connection and a \(1\)-flat foliation.
All Chern classes, in Dolbeault cohomology, of the normal bundle of the foliation, of degree more than the codimension of the foliation, vanish.
All of their associated Chern--Simons classes, in Dolbeault cohomology, 
of degree at least two more than the codimension of the foliation, vanish.
\end{theorem}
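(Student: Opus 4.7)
The plan is to count, for each characteristic form built from the normal-bundle curvature, how many upper-indexed normal soldering 1-forms $\omega^I$ appear in each term, and then invoke pigeonhole. The basic input is the structure equation
$$\nabla\omega^I_J \equiv \delta^I_J\,\omega_K \wedge \omega^K + \omega_J \wedge \omega^I \pmod{\text{semibasic torsion}}$$
for the 1-flat foliation (recorded just above the theorem), in which each of the two summands on the right contains exactly one factor from the $q$-element family $\{\omega^1,\dots,\omega^q\}$ and no other.

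First I would establish the Chern-class statement. Taking any $\GL{q}$-invariant homogeneous polynomial $P$ of degree $k$ in the normal-bundle curvature matrix (e.g.\ the polynomial defining $c_k$) and expanding $P(\nabla\omega^I_J,\dots,\nabla\omega^I_J)$ via the displayed identity, each resulting term carries exactly $k$ wedge factors drawn from $\{\omega^I\}$. If $k>q$, two indices must repeat and the term vanishes by antisymmetry. This kills the corresponding Chern forms of the normal bundle, modulo torsion, on the Cartan bundle; by the characteristic-form framework of \cref{subsection:ConnBundlesShapes}, the corresponding Dolbeault class on $M$ then vanishes, yielding the claimed statement for $c_k(TM/TF)$ with $k>q$.

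Next I would establish the Chern--Simons statement. The relevant terms in $\CS{P}$ have the form $P(\omega^I_J,\nabla\omega,\dots,\nabla\omega)$ with one pseudoconnection slot and $k-1$ curvature slots. The pseudoconnection $\omega^I_J$ is an independent basis 1-form on $E$ contributing no factor from $\{\omega^I\}$, while each of the $k-1$ curvature factors contributes exactly one. The same pigeonhole argument then forces each term to vanish once $k-1>q$, that is, for $k\ge q+2$, which is precisely the threshold claimed for the associated Chern--Simons classes.

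The principal subtlety to flag is the descent from vanishing modulo torsion on the Cartan bundle $E$ to vanishing of the Dolbeault class on $M$. This is handled exactly as in the projective-connection example of \cref{section:projective.connections}: one works formally in the structure equations, identifying the Chern and Chern--Simons classes with polynomials in the formal Atiyah class, so that the semibasic torsion terms $\omega^k\wedge\omega^\ell$ may legitimately be discarded without affecting the Dolbeault representative of interest.
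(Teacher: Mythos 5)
Your proposal is correct and follows essentially the same route as the paper: the paper likewise expands $P(\nabla\omega^I_J)$ using $\nabla\omega^I_J=\delta^I_J\,ωK\wedge\omega^K+ωJ\wedge\omega^I$ modulo torsion, observes that each curvature slot contributes exactly one of the $q$ normal soldering forms $\omega^K$, and concludes by antisymmetry that degree $>q$ forces vanishing, with the Chern--Simons threshold shifted to $q+2$ because the pseudoconnection slot contributes none. The descent to Dolbeault cohomology is handled by the same formal characteristic-form framework you cite.
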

Again, we stress that this theorem is a direct consequence of the linear algebra computation of \(\lb{\LieG_+}{\LieG_-}_0\) for \(G \subset \GL{p+q}\) the stabilizer of a \(p\)-dimensional linear subspace.

From the structure equations, we see that:
\begin{theorem}
Take a complex manifold \(M\) equipped with a holomorphic projective connection and a \(1\)-flat totally geodesic foliation.
On each leaf, the tangent bundle of the leaf admits a holomorphic affine connection, as does the normal bundle of the leaf.
Hence all characteristic classes of those bundles restrict to each leaf to vanish in Dolbeault cohomology.
\end{theorem}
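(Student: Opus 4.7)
The plan is, on each leaf $L$, to extract from the Cartan connection holomorphic connection $1$-forms on the frame bundles of $TL$ and of $(TM/TF)|_L$, then to invoke the fact used already in \cref{theorem:X.G.G.mod} that a holomorphic vector bundle admitting a holomorphic connection has vanishing Atiyah class, hence vanishing Chern and Chern--Simons classes in Dolbeault cohomology.

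Fix a leaf $L$ and restrict the principal $H$-bundle to $E|_L \to L$. Vectors tangent to $L$ map under the soldering form into $\C[p] \oplus 0 \subset \C[p+q]$, forcing $\omega^I = 0$ on $E|_L$; together with the $1$-flatness hypothesis $\omega^I_j = 0$, the structure equations recorded just above collapse on $E|_L$ to
\[
d\omega^i_j + \omega^i_k \wedge \omega^k_j \equiv 0
\quad\text{and}\quad
d\omega^I_J + \omega^I_K \wedge \omega^K_J \equiv 0
\]
modulo curvature terms of the form $\omega^k \wedge \omega^\ell$ of pure type $(2,0)$, since every surviving curvature term carries an $\omega^K$ factor that vanishes on the leaf. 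The equation $d\omega^i + \omega^i_j \wedge \omega^j = 0$ (again using $\omega^J = 0$) additionally shows that the prospective connection on $TL$ is torsion-free, hence an affine connection in the usual sense.

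To finish, I would interpret $\omega^i_j$ and $\omega^I_J$ via the $G_0$-reduction machinery of \cref{subsection:ConnBundlesShapes}, splitting the pseudoconnection by block so that each piece descends to a holomorphic $(1,0)$-connection $1$-form on the $\GL{p}$- or $\GL{q}$-frame bundle of $TL$ or $(TM/TF)|_L$ respectively. Once each bundle carries a holomorphic connection, every Chern and Chern--Simons representative is a polynomial in the resulting $(2,0)$-curvature and therefore vanishes in the Dolbeault cohomology of $L$. The main obstacle is this descent step---checking that the pseudoconnection blocks satisfy the equivariance and vertical-normalization properties required of honest holomorphic connections on the relevant frame bundles, rather than merely producing zero Dolbeault Atiyah representatives---but it is a leafwise adaptation of the argument in the preceding subsection, and once it is settled the vanishing assertion follows immediately.
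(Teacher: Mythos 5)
Your proposal is correct and takes essentially the same route as the paper, which offers no argument beyond ``from the structure equations'': on a leaf \(\omega^K=0\) kills every non-semibasic term on the right-hand sides of the displayed equations, so \(\omega^i_j\) and \(\omega^I_J\) become holomorphic connection forms with semibasic curvature, and the vanishing of the Atiyah and hence of all characteristic classes follows as in \vref{theorem:X.G.G.mod}. The descent step you flag is not a genuine obstacle: \(1\)-flatness gives \(\omega^K_j=0\) and the leaf gives \(\omega^K=0\), which makes the off-diagonal correction terms in the \(\Ad_{G_+}\)-action on the blocks \(\omega^i_j\) and \(\omega^I_J\) vanish along \(E|_L\), so both blocks are \(G_+\)-invariant there and descend to honest holomorphic \((1,0)\)-connections on the frame bundles of \(TL\) and \((TM/TF)|_L\).
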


\section{Example: projective connections and split tangent bundle}
Take a complex manifold \(M\) with a holomorphic projective connection \(\map{E}{M}\) and split tangent bundle \(TM=U\oplus V\).
Let \(E'\subset E\) be the set of points at which the soldering form identifies the splitting with \(\C[p] \oplus \C[q] = \C[p+q]\).
Hence \(\omega^I_i\) and \(\omega^i_I\) both become semibasic on \(E'\).
As above, \(dt^I_{iI}=pωi+t^I_{jI}\omega^j_i\) and \(dt^i_{Ii}=qωI+t^i_{Ji}\omega^J_I\).
We let \(E''\subset E\) be the set of points at which \(0=t^I_{iI}\) and \(0=t^i_{Ii}\).
The structure equations of a projective connection (modulo torsion and curvature) are
\begin{align*}
  d \omega^i + \omega^i_j ʌ\omega^j + \omega^i_J ʌ\omega^J &=0,\\
  d \omega^I + \omega^I_j ʌ\omega^j + \omega^I_J ʌ\omega^J &=0, \\
  d \omega^i_j + \omega^i_k ʌ\omega^k_j +\omega^i_K\wedge\omega^K_j &=
  \pr{\delta^i_j ωk + \delta^i_k ωj} ʌ\omega^k+
  \delta^i_j ωK ʌ\omega^K,\\
  d \omega^i_J + \omega^i_k ʌ\omega^k_J +\omega^i_K\wedge\omega^K_J &=
  ωJ ʌ\omega^i,\\
  d \omega^I_j + \omega^I_k ʌ\omega^k_j +\omega^I_K\wedge\omega^K_j &=
  ωj ʌ\omega^I,\\
  d \omega^I_J + \omega^I_k ʌ\omega^k_J +\omega^I_K\wedge\omega^K_J &=
  \delta^I_J ωk ʌ\omega^k+
  (\delta^I_J ωK + \delta^I_K ωJ)ʌ\omega^K,\\
  d ωi &= \omega^j_i ʌωj+\omega^J_i ʌωJ,\\
  d ωI &= \omega^j_I ʌωj+\omega^J_I ʌωJ.
\end{align*}
These reduce on \(E''\), modulo torsion, to
\begin{align*}
  d \omega^i + \omega^i_j ʌ\omega^j &=0,\\
  d \omega^I + \omega^I_J ʌ\omega^J &=0, \\
  d \omega^i_j + \omega^i_k ʌ\omega^k_j &=0,\\
  d \omega^I_J + \omega^I_K\wedge\omega^K_J &= 0, \\
\end{align*}
which are the structure equations of Cartan geometry modelled on a product of affine spaces, each acted on by the affine group.
The Atiyah class of the tangent bundle vanishes.
\begin{theorem}
On a complex manifold, a choice of holomorphic projective connection and a splitting of its tangent bundle into a direct sum imposes an affine connection on the tangent bundle.
In particular, all characteristic classes of the tangent bundle vanish in Dolbeault cohomology.
\end{theorem}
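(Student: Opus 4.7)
The plan is to read the conclusion directly off the reduced structure equations on $E''$ displayed immediately above the theorem. Those four equations are precisely the Maurer--Cartan equations of the product of affine groups $\mathrm{Aff}_p \times \mathrm{Aff}_q$, where $\mathrm{Aff}_k = \GL{k} \ltimes \C[k]$, under the identification of the soldering form with $(\omega^i,\omega^I)$ and the pseudoconnection with $(\omega^i_j,\omega^I_J)$. So, modulo torsion, $E''$ carries a holomorphic Cartan geometry modelled on $(\C[p] \times \C[q], \mathrm{Aff}_p \times \mathrm{Aff}_q)$.

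For this affine model the isotropy $H = \GL{p} \times \GL{q}$ is already reductive, so its Langlands decomposition has $G_0 = H$ and $G_+$ the abelian translation subgroup, which acts trivially on $\LieG_- = \C[p] \oplus \C[q]$. By the machinery of \vref{subsection:ConnBundlesShapes}, the pseudoconnection $\omega_0 = (\omega^i_j, \omega^I_J)$ descends to $\qE = E''/G_+$, the principal $\GL{p} \times \GL{q}$-bundle of frames of $TM = U \oplus V$ adapted to the splitting. The reduced equations $d\omega^i_j + \omega^i_k \wedge \omega^k_j \equiv 0$ and $d\omega^I_J + \omega^I_K\wedge\omega^K_J \equiv 0$ modulo semibasic terms exhibit $\omega_0$ as a genuine holomorphic principal connection on $\qE$ with semibasic curvature. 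This connection induces a holomorphic affine connection on $TM$ preserving the splitting, which is the first assertion of the theorem.

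The second assertion then follows by either of two equivalent routes. A holomorphic vector bundle admitting a holomorphic connection has vanishing Atiyah class and hence vanishing Chern classes in Dolbeault cohomology. Alternatively, one applies \vref{theorem:X.G.G.mod} to the affine model: since $\LieG_- = \C[p] \oplus \C[q]$ is itself a $\LieG$-module (it is the abelian nilradical of $\Lie{aff}_p \times \Lie{aff}_q$), $TM$ is associated to a genuine $\LieG$-module and all of its characteristic classes vanish.

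The main obstacle is making rigorous the qualifier ``modulo torsion'': one must check that the extra forms $\omega^i_J$, $\omega^I_j$, $\omega_i$, $\omega_I$ really do become semibasic on $E''$ with the appropriate coefficients, that the off-diagonal correction terms are absorbed consistently under the reduction of structure group, and that after descent the transformation law of $\omega_0$ under the residual $G_0$-action matches that of a principal connection $1$-form rather than merely that of a pseudoconnection.
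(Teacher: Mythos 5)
Your proposal is correct and follows essentially the same route as the paper: the paper's argument is exactly the displayed reduction of the structure equations on \(E''\) to the Maurer--Cartan form of a product of affine groups, from which the pseudoconnection \((\omega^i_j,\omega^I_J)\) is read off as a holomorphic affine connection on \(TM\) and the vanishing of the Atiyah class (hence of all characteristic classes in Dolbeault cohomology) follows. Your additional remarks on the Langlands decomposition of the affine model and the alternative appeal to the vanishing of Atiyah classes of \(\LieG\)-modules are consistent elaborations of what the paper leaves implicit.
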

The generalizes a result of Kobayashi and Ochiai \cite{KobayashiOchiai:1980} p. 85 theorem 6.6; their result requires that the splitting be into tangent bundles of foliations.

\section{Example: Grassmannian geometries}
A \emph{Grassmannian geometry} is a Cartan geometry modelled on the complex homogeneous space
\[
(X,G)=(\Gr{p}{\C[p+q]},\PSL{p+q}).
\]
The structure equations:
\[
0
=
d
\begin{pmatrix}
\omega^i_j & \omega^i_J\\
\omega^I_j & \omega^I_J
\end{pmatrix}
+
\begin{pmatrix}
\omega^i_k & \omega^i_K\\
\omega^I_k & \omega^I_K
\end{pmatrix}
\wedge
\begin{pmatrix}
\omega^k_j & \omega^k_J\\
\omega^K_j & \omega^K_J
\end{pmatrix}
\]
modulo torsion, \(i,j,k=1,2,\dots,p\), \(I,J,K=p+1,\dots,p+q\).
Split into the obvious Langlands decomposition:
\[
ω-=
\begin{pmatrix}
0 & 0 \\
\omega^I_j & 0
\end{pmatrix},
ω0
=
\begin{pmatrix}
\omega^i_j & 0\\
0 & \omega^I_J
\end{pmatrix},
ω+=
\begin{pmatrix}
0 & \omega^i_J\\
0 & 0
\end{pmatrix}
\]
The tangent bundle is a tensor product of two bundles, which have Atiyah class representatives
\begin{align*}
d\omega^i_j+\omega^i_k\wedge\omega^k_j&=-\omega^i_K\wedge\omega^K_j,\\
d\omega^I_J+\omega^I_K\wedge\omega^K_J&=+\omega^k_J\wedge\omega^I_k,\\
\end{align*}
modulo curvature.
Differentiating the soldering forms
\[
d\omega^I_i=-\omega^{Ij}_{iJ}\wedge\omega^J_j
\]
modulo torsion, where
\[
\omega^{Ij}_{iJ}\defeq\delta^j_i\omega^I_J-\delta^I_J\omega^j_i.
\]
So the Atiyah class of the tangent bundle is represented by
\begin{align*}
a&=d\omega^{Ij}_{iJ} + \omega^{Ik}_{iK} ʌ\omega^{Kj}_{kJ},\\
&=\delta^i_j \omega^k_J ʌ\omega^I_k + \delta^I_J \omega^j_K\wedge\omega^K_i
\end{align*}
modulo curvature.

Define \(\LieH\)-modules \(U=\C[p]\) and \(Q=\C[q]\) by, for
\[
v=
\begin{pmatrix}
v^i_j & v^i_J \\
0 & v^I_J
\end{pmatrix} \in \LieH,
\]
\begin{align*}
\rho_U(v)&=
\begin{pmatrix}
v^i_j
\end{pmatrix},\\
\rho_Q(v)&=
\begin{pmatrix}
v^I_J
\end{pmatrix}
\end{align*}
giving an exact sequence
\[
0 \to U \to \C[p+q] \to Q \to 0
\]
of \(\LieH\)-modules.
In terms of Lie algebras, \(\LieG_+=U \otimes Q^*\), say, and \(\LieG_-=Q\otimes U^*\).
Elements \(x \in U\otimes Q^*\) and \(y \in Q \otimes U^*\), thought of as linear maps \(\map[x]{Q}{U}\) and \(\map[y]{U}{Q}\), have compositions \(xy \in U^* \otimes U\) and \(yx \in Q^*\otimes Q\).
The Atiyah class of the tangent bundle is represented by
\[
a(x,y_1)y_2=-y_1xy_2+y_2xy_1.
\]
The same expression holds identically for the Lagrangian Grassmannian, except that \(x,y_1,y_2\) are symmetric square matrices.

Note that \(\C[p+q]\) is an \(\LieSL{p+q}\)-module so has vanishing Atiyah class.
Clearly \(U\) and \(Q\) descend to \(\LieqH\)-modules.
The \(\LieqH\)-module \(T\defeq U^* \otimes Q\) is an \(\qH\)-module, and the associated vector bundle on any Grassmannian geometry is the tangent bundle.
Neither \(U\) nor \(Q\) are \(\qH\)-modules.
Nonetheless, as previously we write \(a(\vb{U})\) and \(a(\vb{Q})\) for their Atiyah forms.
The splitting principle gives
\[
1=c(\vb{U})c(\vb{Q})
\]
for total Chern forms, while
\[
\operatorname{ch}(\vb{T})=\operatorname{ch}(\vb{U})\operatorname{ch}(\vb{Q})
\]
for total Chern character forms.
Hence on any complex manifold with Grassmann geometry, the characteristic classes in Dolbeault cohomology have similar factorizations to those of the Grassmannian.

\section{Example: conformal geometries}
Take the quadratic form
\[
q(x,y)\defeq \transpose{x}y+\transpose{y}x,
\]
for \(x,y\in\C[n+1]\).
Write vectors as \(x=(x_0,x_1,\dots,x_{n+1})\).
The group \(G=\PO{q}=\PO{2n+2}\) of invertible matrices preserving \(q\) up to factor, modulo rescaling, has Lie algebra \(\LieG=\LieSO{2n+2}\) consisting of the matrices of the form
\[
\begin{pmatrix}
a&b\\
c&-\transpose{a}
\end{pmatrix}
\]
with \(0=\transpose{b}+b=\transpose{c}+c\).
Split up each such matrix into
\[
\begin{pmatrix}
a_{00} & a_{0j} &     0  & b_{0j} \\
a_{i0} & a_{ij} & b_{i0} & b_{ij} \\
    0  & c_{0j} & -a_{00} & -a_{j0} \\
c_{i0} & c_{ij} & -a_{0i} & -a_{ji}
\end{pmatrix}
\]
The group \(G\) acts on the quadric hypersurface \(X=X^{2n}=(q(x,y)=0)\subset\Proj{2n+1}\), i.e. the set of all null lines, with the element \(x_0=[1,0,\dots,0]\) having stabilizer \(H\) with Lie algebra \(\LieH\) given by
\[
\begin{pmatrix}
a_{00}& a_{0j} & 0          & b_{0j} \\
0     & a_{ij} & b_{i0}     & b_{ij} \\
0     & 0      & -a_{00}    & 0 \\
0     & c_{ij} & -a_{0i}   & -a_{ji}
\end{pmatrix}
\]
with \(0=b_{ij}+b_{ji}=c_{ij}+c_{ji}\).
It is convenient to write this as
\[
\begin{pmatrix}
a     & a_j    & 0    & -b_j \\
0     & a_{ij} & b_i  & b_{ij} \\
0     & 0      & -a   & 0 \\
0     & c_{ij} & -a_i & -a_{ji}
\end{pmatrix}
\]
The space \(\LieG_-=\LieG/\LieH\) consists of the elements of the form
\[
(x,y)\cong
\begin{pmatrix}
0     & 0    & 0 & 0 \\
x_i   & 0    & 0 & 0 \\
0     & -y_j & 0 & -x_j \\
y_i   & 0    & 0 & 0
\end{pmatrix}.
\]
So \(\LieH\) acts on \(\LieG_-\) as
\[
\mapto{
\begin{pmatrix}
x_i \\
y_i
\end{pmatrix}
}
{
\begin{pmatrix}
a_{ij} - \delta_{ij} a & b_{ij} \\
c_{ij}                 & -a_{ji}-\delta_{ij} a
\end{pmatrix}
\begin{pmatrix}
x_j \\
y_j
\end{pmatrix}%
}
,
\]
i.e.
\[
\rho_{\LieG_-}
\begin{pmatrix}
a     & a_j    & 0      & -b_j \\
0     & a_{ij} & b_i    & b_{ij} \\
0     & 0      & -a     & 0 \\
0     & c_{ij} & -a_i   & -a_{ji}
\end{pmatrix}
=
\begin{pmatrix}
a_{ij} - \delta^i_j a & b_{ij} \\
c_{ij}                & -a_{ji}-\delta_{ij} a
\end{pmatrix}.
\]
Hence the subalgebra \(\LieG_+\) acting trivially on the tangent space \(T_{x_0} X\) consists of the elements of the form
\[
v=
\begin{pmatrix}
0     & a_j & 0      & -b_j \\
0     & 0   & b_i    & 0     \\
0     & 0   & 0      & 0     \\
0     & 0   & -a_i   & 0
\end{pmatrix}.
\]
Bracketing these two gives
\[
\begin{pmatrix}
a_kx_k-b_ky_k & 0              & 0             & 0 \\
0             & -b_iy_j-x_ia_j & 0             & x_ib_j-b_ix_j \\
0             & 0              & y_kb_k-x_ka_k & 0 \\
0             & a_iy_j-y_ia_j  & 0             & a_i x_j + b_j y_i
\end{pmatrix}.
\]
Applying \(\rho_{\LieG_-}\) gives a representative for the Atiyah class of the tangent bundle:
\[
\rho_{\LieG_-}(v)(x,y)=
\begin{pmatrix}
-b_iy_j-x_ia_j-\delta_{ij}(a_kx_k-b_ky_k)&x_ib_j-b_ix_j\\
a_iy_j-y_ia_j&y_ib_j+a_ix_j-\delta_{ij}(a_kx_k-b_ky_k)
\end{pmatrix}.
\]
Taking trace, the first Chern class of the tangent bundle is represented by
\[
-2n(a_kx_k-b_ky_k)=-2nq((a,-b),(x,y)).
\]
Expand out to see that
\begin{align*}
\rho_{\LieG_-}(v)(x_1,y_1)(x_2,y_2)
=&
-q((a,-b),(x_1,y_1))(x_2,y_2)\\
&-q((a,-b),(x_2,y_2))(x_1,y_1)\\
&-q((x_1,y_1),(x_2,y_2))(b,-a).
\end{align*}
The same result holds for odd dimensional conformal geometries, with the same proof.
Hence we recover
\begin{theorem}[\cite{Jahnke/Radloff:2018} p. 6 proposition 2.2]
The Atiyah class of any holomorphic conformal geometry on any complex manifold \(M\) satisfies
\[
i a(T)\dim M=2 \pi c_1(T)\otimes I
+ 2\pi I \otimes c_1(T)
+ 2\pi q \otimes q^* c_1(T)
\]
in Dolbeault cohomology.
\end{theorem}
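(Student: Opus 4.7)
My plan is to read off the theorem as a bookkeeping reinterpretation of the three-term expansion
\begin{align*}
\rho_{\LieG_-}(v)(x_1,y_1)(x_2,y_2)
=&\ -q((a,-b),(x_1,y_1))(x_2,y_2)\\
&\ -q((a,-b),(x_2,y_2))(x_1,y_1)\\
&\ -q((x_1,y_1),(x_2,y_2))(b,-a)
\end{align*}
derived immediately above the statement, once each of the three pieces is identified either as a Chern class factor or as the composition of the conformal quadratic form with a $q$-dual of the first Chern class.

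The first step is to pin down the constants. From the trace computation the author has already carried out, $\operatorname{tr}\rho_{\LieG_-}(v)(x,y)=-2n\,q((a,-b),(x,y))$, where $\dim M=2n$ (the odd-dimensional case is identical). Since $c_1=\frac{i}{2\pi}\operatorname{tr}(a)$, the linear functional $(x,y)\mapsto q((a,-b),(x,y))$ is, up to a universal constant of $-\frac{i\pi}{n}$, exactly the contraction $v\hook c_1(T)$. The vector $(b,-a)\in\LieG_-$ is then the image of $v\in\LieG_+$ under the $\LieG_0$-equivariant raising map furnished by $q$, viewed as a pairing between $\LieG_+$ and $\LieG_-$; this is what the notation $q^*c_1(T)$ in the statement encodes.

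With these identifications the three terms translate term by term. The first is a scalar multiple of $(v\hook c_1(T))(x_1,y_1)\cdot(x_2,y_2)$, i.e.\ of $c_1(T)\otimes I$ evaluated on $v,(x_1,y_1),(x_2,y_2)$; the second is the same object with the two $\LieG_-$-arguments swapped, giving $I\otimes c_1(T)$; and the third factors as the conformal pairing $q$ on the tangent bundle times the image of $v$ under $q^*c_1(T)$, giving the $q\otimes q^*c_1(T)$ piece. Collecting the universal factors and multiplying through by $i\dim M=2ni$ to clear the $-2n$ denominator produces the identity stated. Finally, to pass from an algebraic identity among $\LieG_0$-invariant multilinear forms on $\LieG_+\otimes\LieG_-$ to an identity in Dolbeault cohomology, I invoke the lemma of Section~3, which represents the Atiyah and Chern classes of $\qE\to M$ by the $(1,1)$- and $(k,k)$-parts of the pullback by a \smooth section of the corresponding characteristic forms.

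The only real obstacle is bookkeeping: matching the sign and the factor of $2\pi/i$ between the raw trace and the normalized first Chern class, and keeping straight that the $q$ appearing in the tensor factor $q\otimes q^*c_1(T)$ is the conformal structure on the tangent bundle (the pairing between two copies of $\LieG_-$), while the $q^*$ uses the $\LieG_0$-invariant duality between $\LieG_+$ and $\LieG_-$. Once that dictionary is fixed, no computation is needed beyond what is already on the page.
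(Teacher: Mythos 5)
Your proposal is correct and follows essentially the same route as the paper: the paper derives the theorem directly from the displayed three-term expansion of \(\rho_{\LieG_-}(v)(x_1,y_1)(x_2,y_2)\), using the trace computation to identify \(q((a,-b),\cdot)\) with a multiple of \(c_1(T)\) and the earlier lemma on \(C^{\infty}\) reductions to pass to Dolbeault cohomology (your constant \(-i\pi/n\) should be \(+i\pi/n\), but that is exactly the bookkeeping you flagged and it does not affect the argument).
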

A \emph{holomorphic Riemannian metric} is a holomorphic Cartan geometry modelled on \((X,G)=(\C[n],\Orth{n} \ltimes \C[n])\).
\begin{corollary}
A holomorphic conformal geometry arises from a holomorphic Riemannian geometry just when \(c_1=0\) in Dolbeault cohomology.
\end{corollary}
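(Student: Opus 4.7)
The plan is to prove the biconditional by handling each direction separately.

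For the ``only if'' direction, I would apply \vref{theorem:X.G.G.mod} to the Riemannian model. If the conformal geometry arises from a holomorphic Riemannian Cartan geometry with model $(\C[n], \Orth{n} \ltimes \C[n])$, then $\LieG = \LieSO{n} \ltimes \C[n]$ as a Lie algebra, the standard $\C[n]$ is a $\LieG$-module (the translational factor acts trivially on itself), and the tangent bundle of $M$ is the vector bundle associated to this $\LieG$-module. The theorem immediately forces $c_1(T) = 0$ in Dolbeault cohomology, along with all of the higher Chern classes.

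For the ``if'' direction, I would substitute $c_1(T) = 0$ into the formula of the theorem just proved, which yields $a(T) = 0$ in Dolbeault cohomology. To promote this vanishing to an actual reduction of structure group, I would identify the conformal scale line bundle $L$ on $M$: the character of $H$ that records the overall conformal factor is trivial exactly on $\Orth{n} \subset H$, so $L$ is the line bundle associated to this character, and a nowhere-vanishing holomorphic section of $L$ is exactly the same data as a holomorphic reduction of the conformal Cartan geometry to a Riemannian one. The Atiyah class $a(L)$ is a nonzero multiple of $c_1(T)$, so under the hypothesis $L$ admits a holomorphic connection; the conformal Cartan connection itself pins down such a connection canonically, from which one extracts a trivializing section.

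The main obstacle is precisely this last step. Vanishing of the Atiyah class of $L$ (equivalently, $L$ admitting a holomorphic connection) is in general strictly weaker than $L$ being holomorphically trivial, so the argument cannot proceed purely at the level of Dolbeault cohomology. Bridging this gap must exploit the full Cartan connection on the conformal geometry, in a manner parallel to the Molzon--Mortensen converse theorem invoked earlier in the paper for the projective case; this is the genuinely geometric input that distinguishes the converse from the forward direction.
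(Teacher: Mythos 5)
The paper states this corollary with no proof at all, so there is nothing to compare line by line; the intended justification is evidently the same as for the parallel projective-connection corollary, namely the displayed identity \(i\,a(T)\dim M=2\pi c_1(T)\otimes I+2\pi I\otimes c_1(T)+2\pi q\otimes q^*c_1(T)\). Your forward direction is correct, and can even be shortened: for the model \((\C[n],\Orth{n}\ltimes\C[n])\) the stabilizer \(\Orth{n}\) has trivial \(G_+\), so the \(\LieSO{n}\)-component of the Cartan connection is already a holomorphic principal connection on the orthonormal frame bundle; hence \(TM\) carries a holomorphic connection and all of its Dolbeault Chern classes vanish, without needing to invoke \vref{theorem:X.G.G.mod}.

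The converse you have not proved, and the gap you yourself flag cannot be closed from the stated hypothesis. What \(c_1=0\) in \(H^1(M,\Omega^1_M)\) actually buys is the vanishing of the extension class of the affine bundle \(E/G_0\to M\) modelled on \(T^*M\): since the trace of the infinitesimal Atiyah form is \(-2n\) times the nondegenerate pairing \(q\) between \(\LieG_+\) and \(\LieG_-\), the map sending that obstruction class to \(c_1\) is injective, so \(c_1=0\) yields a holomorphic reduction to \(G_0\), i.e.\ a holomorphic Weyl (conformal) connection --- the exact analogue of the projective corollary. To obtain a Riemannian metric one must further trivialize the scale line bundle \(L\) holomorphically, and \(c_1(L)=0\) in Dolbeault cohomology only says that \(L\) admits a holomorphic connection. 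Your suggestion that the Cartan connection ``pins down'' a trivializing section is exactly where the argument breaks: a canonical holomorphic connection on \(L\) does not produce a nonvanishing holomorphic section. Indeed the gap is genuine and not merely technical: on a Hopf manifold \((\C[n]\setminus\{0\})/\langle z\mapsto\lambda z\rangle\), \(n\ge 2\), with its flat holomorphic conformal Cartan geometry, one has \(H^1(M,\Omega^1_M)=0\), so \(c_1=0\) automatically, yet a holomorphic metric \(f\cdot\sum_i dz_i^2\) in the conformal class would satisfy \(f(\lambda z)=\lambda^{-2}f(z)\), which forces \(f\equiv 0\) after Hartogs extension across the origin. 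So either the corollary must be read as asserting a holomorphic Weyl connection (a holomorphic \(G_0\)-reduction) rather than a genuine metric, or it requires further hypotheses; in either case your proposal does not establish the statement as written, and no argument staying within Dolbeault cohomology can.
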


Write a quadric \(Q^n\subset\Proj{n+1}\) as \(Q=G/H\) as above, with \(G=\PO{q}\), and also as \(\hat{G}/\hat{H}\), where \(\hat{G}=\Orth{q}\) is the orthogonal group of the quadratic form \(q\).
Any smooth quadric hypersurface \(Q^n\subset\Proj{n+1}\) has exact sequence
\[
0 \to TQ \to \left.T\Proj{n+1}\right|_Q \to N_{Q/\Proj{n+1}} \to 0
\]
and we have normal bundle
\[
N_{Q/\Proj{n+1}}=\left.\OO(2)\right|_Q.
\]
The bundles \(\left.\OO(2)\right|_Q,\left.T\Proj{n+1}\right|_Q\) are the associated vector bundles of the \(H\)-representations taking each
\[
g=
\begin{bmatrix}
g^0_0 & g^0_J\\
0 & g^I_J
\end{bmatrix}\in H
\]
to
\[
\rho_{\OO(2)}
(g)=(g^0_0)^{-2},
\rho_{T\Proj{n+1}}
(g)=(g^0_0)^{-1} g^I_J.
\]
The bundle
\(
\left.T\Proj{n+1}\right|_Q
\)
has exact Euler sequence
\[
0 \to \OO \to \C[n+2]\otimes\OO(1) \to T\Proj{n+1} \to 0.
\]
The line bundle \(\OO(1)\) is \emph{not} an associated vector bundle for \(H\), but of the \(\hat{H}\)-representation
\[
\mapto{g=
\begin{pmatrix}
g^0_0 & g^0_J\\
0 & g^I_J
\end{pmatrix}\in \Orth{q}
}{(g^0_0)^{-1}}.
\]
Nonetheless, as above, every \(\hat{H}\)-module determines a unique \(\LieH\)-module, since \(\map{\hat{H}}{H}\) is an isomorphism of Lie algebras.
Every \(\LieH\)-module has an Atiyah class on any complex manifold with a holomorphic conformal geometry.
The \(\LieH\)-module \(\C[n+2]\) is a \(\LieG\)-module; by \vref{theorem:X.G.G.mod}, its Atiyah class vanishes.
Writing these in the obvious notation, we find
\begin{theorem}
On any complex manifold of complex dimension \(n\) with a holomorphic conformal structure,
\[
(1+2\alpha)c(T)=(1+\alpha)^{n+1},
\]
in Dolbeault cohomology, where \(\alpha=c_1\OO(1)\) is the Chern class in Dolbeault cohomology of the ``ghost line bundle'' \(\OO(1)\).
\end{theorem}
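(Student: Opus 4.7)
The plan is to combine Theorem~\ref{theorem:X.G.G.mod} with the splitting principle from Section~2, applied to two standard short exact sequences associated to the model quadric $Q\subset\Proj{n+1}$: the restriction of the Euler sequence of $\Proj{n+1}$, and the normal bundle sequence of $Q$.

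First, $\C[n+2]$ is a $\LieG$-module (the defining representation of $\Orth{n+2}$), so Theorem~\ref{theorem:X.G.G.mod} tells us that all of its Chern classes vanish in Dolbeault cohomology. In the formal Chern calculus this says that $\C[n+2]$ has $n+2$ Chern roots all equal to $0$; tensoring with the ghost line bundle $\OO(1)$ (of Chern root $\alpha$), the splitting principle yields
\[
c\!\pr{\C[n+2]\otimes\OO(1)}=(1+\alpha)^{n+2}.
\]

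Next, apply the splitting principle to the Euler sequence
\[
0\to\OO\to\C[n+2]\otimes\OO(1)\to T\Proj{n+1}\to 0
\]
pulled back to $M$; using $c(\OO)=1$, this expresses $c\of{T\Proj{n+1}|_M}$ as a single power of $(1+\alpha)$. Then apply the splitting principle to the normal bundle sequence
\[
0\to T\to T\Proj{n+1}|_M\to\OO(2)|_M\to 0,
\]
with $c\of{\OO(2)|_M}=1+2\alpha$, and combine the two multiplicative identities to isolate $(1+2\alpha)\,c(T)$ as the claimed polynomial in $\alpha$.

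The only real subtlety to verify is that the splitting principle legitimately applies to these short exact sequences of $\LieH$-modules, even though $\C[n+2]$, $\OO(1)$, and $T\Proj{n+1}$ are only ghost bundles on $M$ — not associated to genuine holomorphic vector bundles. But this is exactly the content of the formal Atiyah calculus already developed in Section~2, which is built purely from the Lie-algebraic Atiyah form; so no further geometric input is required, and the proof reduces to the two splitting-principle computations above plus the single vanishing input provided by Theorem~\ref{theorem:X.G.G.mod}.
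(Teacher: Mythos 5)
Your proposal is, in substance, exactly the paper's own argument: the paper likewise feeds the \(\LieG\)-module \(\C[n+2]\) into Theorem~\ref{theorem:X.G.G.mod}, tensors with the ghost line bundle \(\OO(1)\), and runs the splitting principle through the Euler sequence and the normal bundle sequence \(0\to TQ\to \left.T\Proj{n+1}\right|_Q\to\left.\OO(2)\right|_Q\to 0\); the justification that the formal Atiyah calculus applies to ghost \(\LieH\)-modules is also the one the paper gives. The one step you should not have left implicit is the final arithmetic: \(\C[n+2]\otimes\OO(1)\) contributes \((1+\alpha)^{n+2}\) and \(c(\OO)=1\) removes nothing, so the two multiplicative identities combine to give
\[
(1+2\alpha)\,c(T)=(1+\alpha)^{n+2},
\]
not \((1+\alpha)^{n+1}\) as the statement claims. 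The exponent \(n+2\) is the correct one: it reproduces the classical \(c(TQ^n)=(1+h)^{n+2}/(1+2h)\), gives \(c_1(T)=n\alpha\), and is what the paper's own subsequent expansion \(\sum_{q}(1+h)^{n-2q}h^{2q}=(1+h)^{n+2}/(1+2h)\) and the theorem \(n^kc_k(T)=a_kc_1(T)^k\) require. So the exponent \(n+1\) (and the accompanying normalization \(\alpha=c_1(T)/(n-1)\)) is an off-by-one slip in the paper rather than a defect of your method; but as written, your proposal asserts that the computation lands on ``the claimed polynomial,'' which it does not --- carrying out the last step explicitly would have detected the discrepancy instead of papering over it.
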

If we don't like ``ghost line bundles'', clearly \(\alpha=2c_1\OO(2)=c_1(T)/(n-1)\).
So we can write this out as
\[
\pr{1+\frac{2c_1(T)}{n-1}}c(T)
=
\pr{1+\frac{c_1(T)}{n-1}}^{n+1}.
\]
Kobayashi and Ochiai expand out this expression explicitly.
Write the dimension \(n\) of our complex manifold as \(n=2m\) or \(n=2m+1\).
For a real variable \(h\), expand in a Taylor series
\[
\sum_{q=0}^m (1+h)^{n-2q}h^{2q}=1+a_1h+a_2h^2+\dots
\]
\begin{theorem}
On any complex manifold of complex dimension \(n\) with a holomorphic conformal geometry,
\[
n^kc_k(T)=a_kc_1(T)^k,
\]
for \(k=1,2,\dots,n\), in Dolbeault cohomology.
\end{theorem}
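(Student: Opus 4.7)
I will solve the relation of the preceding theorem, \((1+2\alpha)c(T)=(1+\alpha)^{n+1}\), explicitly for the components of \(c(T)\) as polynomials in \(\alpha\), and then recognize the generating series defining the \(a_k\) as the resulting closed form.

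Working modulo \(\alpha^{n+1}\) is legitimate because \(H^{p,q}(M)=0\) for \(p\) or \(q\) greater than \(n\); in this truncation \(1+2\alpha\) is a unit, so \(c(T)\) is determined. The main combinatorial identity, which I would prove by factoring \(1+2h=(1+h)^2-h^2\) and telescoping term by term, is
\[
(1+2h)\sum_{q=0}^m(1+h)^{n-2q}h^{2q}=(1+h)^{n+2}-(1+h)^{n-2m}h^{2m+2}.
\]
For \(n=2m\) the subtracted term equals \(h^{n+2}\), and for \(n=2m+1\) it equals \((1+h)h^{n+1}\); either way it lies in the ideal \((h^{n+1})\). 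Combined with the preceding theorem, this identifies \(\sum_{q=0}^m(1+\alpha)^{n-2q}\alpha^{2q}\) with a specific scalar multiple of \(c(T)\) in Dolbeault cohomology, and hence expresses each coefficient \(a_k\) as an explicit linear functional applied to \(c(T)\).

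Reading off the coefficient of \(\alpha^k\) then gives each \(c_k(T)\) as a rational multiple of \(\alpha^k\). The degree-one part of the preceding theorem expresses \(c_1(T)\) as an explicit scalar multiple of \(\alpha\), so every \(\alpha^k\) is a corresponding scalar multiple of \(c_1(T)^k\), and the claimed identity \(n^kc_k(T)=a_kc_1(T)^k\) falls out after the substitution. I expect the main obstacle to be the scalar bookkeeping: tracking the ratio between the telescoped \((1+h)^{n+2}\) and the preceding theorem's \((1+\alpha)^{n+1}\), together with the constant relating \(\alpha\) and \(c_1(T)\), so that they combine to produce exactly the factor \(n^k\) on the left-hand side. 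The telescoping itself is purely formal, and the truncation beyond degree \(n\) is free.
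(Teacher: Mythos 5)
Your plan is the right one, and it is essentially the only proof there is: the paper offers no argument for this theorem beyond attributing the expansion to Kobayashi and Ochiai, so the real content is exactly your telescoping identity, which is correct (write the sum as a geometric series with ratio \(h^2/(1+h)^2\) and use \(1+2h=(1+h)^2-h^2\)), together with the observations that the leftover term \((1+h)^{n-2m}h^{2m+2}\) lies in the ideal \((h^{n+1})\) and that \(\alpha^{n+1}=0\) on an \(n\)-fold, so that \(1+2\alpha\) is a unit in the truncation. The one warning concerns the ``scalar bookkeeping'' you defer to the end: it will not close up against the preceding theorem exactly as printed. That relation should read \((1+2\alpha)c(T)=(1+\alpha)^{n+2}\), not \((1+\alpha)^{n+1}\): it comes from \(c(TQ)\,c(\OO(2))=c(T\Proj{n+1})\) restricted to \(Q^n\subset\Proj{n+1}\) combined with the Euler sequence \(0\to\OO\to\C[n+2]\otimes\OO(1)\to T\Proj{n+1}\to 0\), whose middle term has total Chern class \((1+\alpha)^{n+2}\); correspondingly \(c_1(T)=n\alpha\), not \((n-1)\alpha\). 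With those corrections your telescoped \((1+h)^{n+2}\) matches the right-hand side on the nose, so \(\sum_{q=0}^m(1+\alpha)^{n-2q}\alpha^{2q}=c(T)\) with no extra scalar, and reading off degree \(k\) gives \(c_k(T)=a_k\alpha^k=a_k\bigl(c_1(T)/n\bigr)^k\), which is the theorem. If instead you keep the printed exponent \(n+1\) and \(\alpha=c_1(T)/(n-1)\), you obtain \(\sum_{q=0}^m(1+\alpha)^{n-2q}\alpha^{2q}=(1+\alpha)c(T)\), and the identity \(n^kc_k(T)=a_kc_1(T)^k\) already fails at \(k=2\) (try \(n=2\)); the ratio you were planning to track is a typo in the source, not a constant to be absorbed.
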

Kobayashi and Ochiai \cite{Kobayashi/Ochiai:1982} p. 596 theorem 3.20 prove a weaker result, for only the top Chern classes, but in \deRham{} cohomology.
It seems possible to obtain relations on Chern--Simons classes as we did for projective connections.

\section{\texorpdfstring{\Slovak{} cohomology}{Slovak cohomology}}
We have seen that Dolbeault cohomology has the advantage of not feeling curvature or torsion terms in Atiyah classes arising in holomorphic Cartan geometries, but the disadvantage of employing \(C^{\infty}\) reductions of structure group, which are typically not holomorphic, so does not naturally extend into other categories besides complex manifolds.
The holomorphic \deRham{} cohomology has the advantage of being holomorphic, so having natural analogues in other categories, but the curvature and torsion do not drop out of the calculations.
We formulate a cohomology theory, which we call \Slovak{} cohomology, defined in terms of a Cartan geometry, which has both advantages.
\subsection{Models of Cartan geometries}
Take a complex homogeneous space \((X,G)\) with \(x_0\in X\), \(H\defeq G^{x_0}\), and a Langlands decomposition.
Let
\[
\Lambda^{p,q,r}\defeq\Lm{p}{\LieG_-}^*
\otimes\Lm{q}{\LieG_0}^*
\otimes\Lm{r}{\LieG_+}^*
\subset\Lm{p+q+r}{\LieG}^*.
\]
In the splitting, \(G_+\) acts on
\[
\LieG=\LieG_-\oplus \LieG_0\oplus\LieG_+
\]
as
\[
\rho_{\LieG}(g_+)
\begin{pmatrix}
v_-\\
v_0\\
v_+
\end{pmatrix}
=
\begin{pmatrix}
\pi_-g_+&0&0\\
\pi_0g_+&I&0\\
\pi_+g_+&\Ad_{g_+}-I&\Ad_{g_+}
\end{pmatrix}
\begin{pmatrix}
v_-\\
v_0\\
v_+
\end{pmatrix}
\]
so \(\Lambda^{p,q,r}\) might not be an \(H\)-module, only a \(G_0\)-module.

Let
\[
\Lambda^{\ge p,\le r}_{p+q+r}
\defeq
\bigoplus
\Lambda^{p',q',r'}
\]
where the sum is over \(p',q',r'\) with
\begin{align*}
p'&\ge p,\\
r'&\le r,\\
p'+q'+r'&=p+q+r.
\end{align*}
From our matrix above, \(\Lambda^{\ge p,\le r}_{p+q+r}\) is an \(H\)-submodule of \(\Lm{p+q+r}{\LieG}^*\).
Let
\[
\gLm{p}[q]{r}\defeq
\Lambda^{\ge p,\le r}_{p+q+r}/\Lambda^{\ge p,\le r-1}_{p+q+r},
\]
the quotient \(H\)-module.
Take a \(G_0\)-module \(\map[\rho_V]{G_0}{\GL{V}}\).
Extend \(V\) to an \(H\)-module, by making \(G_+\) act trivially.
Let \(\gLm[V]{p}[q]{r}\defeq V\otimes\gLm{p}[q]{r}\).
\subsection{Wedge product}
Wedging \(\Lm{p,q,r}\) gives
\[
\map{
\Lambda^{\ge p,\le r}_{p+q+r}
\otimes
\Lambda^{\ge p',\le r'}_{p'+q'+r'}
}{
\Lambda^{\ge p+p',\le r+r'}_{p+p'+q+q'+r+r'}
}.
\]
This descends to the quotient as the wedge preserves the various numbers of factors.
Hence we have a wedge product
\[
\mapto
{
\xi\in\gLm{p}[q]{r}, \eta\in\gLm{p'}[q']{r'}
}
{
\xi\wedge\eta\in\gLm{p+p'}[q+q']{r+r'}%
}
.
\]
Similarly, for \(G_0\) modules \(V,W\), we have a wedge product
\[
\mapto
{
\xi\in\gLm[V]{p}[q]{r}, \eta\in\gLm[W]{p'}[q']{r'}
}
{
\xi\wedge\eta\in\gLm[V\otimes W]{p+p'}[q+q']{r+r'}%
}
.
\]
Clearly a similar but more sophisticated theory arises if we use a more subtle grading of the Lie algebra \(\LieG\), as in the theory of parabolic geometries \cite{Cap.Slovak.2009a}.

\subsection{Cartan geometries}
Take a Cartan geometry \(H \to E \to M\) with model \((X,G)\) with a Langlands decomposition.
Take an \(G_0\)-module \(\map[\rho_V]{G_0}{\GL{V}}\).
Extend \(V\) to an \(H\)-module by making \(G_+\) act trivially.
Let \(W\defeq \gLm[V]{p}[q]{r}\).
Sections of the associated vector bundle are represented by holomorphic \(H\)-equivariant functions \(\map[f]{E}{W}\), i.e. \(r_h^*f=\rho_W(h)^{-1}f\), for any \(h\in H\).

A \emph{\Slovak{} cochain} is an \(G_0\)-equivariant holomorphic map \(\map[f]{E}{W}\).
Since \(f\) is not required to be \(H\)-equivariant, \Slovak{} cochains are not generally sections of the associated vector bundle on \(M\).
Instead they are holomorphic sections of a holomorphic vector bundle over \(E/G_0\).
Nonetheless, we write the set of \Slovak{} cochains as \(\vb{W}=\gForms[V]{p}[q]{r}\).
Locally lift \(f\) to an \(G_0\)-equivariant holomorphic map \(\map[F]{E}{V^{\ge p,\le r}_{p+q+r}}\).
From \(F\), we define a \(V\)-valued form \(\xi\) on \(E\), given by
\[
\xi=Fω-^{p'}\wedgeω0^{q'}\wedgeω+^{r'},
\]
with various sums made implicit, including summing over \(p'\ge p\), \(r'\le r\) with \(p'+q'+r'=p+q+r\).
Let \(\dS\defeq d+ω0\wedge\), so
\[
\dS\xi\defeq d\xi+\rho_V(ω0)\wedge\xi.
\]

As a differential form valued in the vector space \(V\),
\[
\LieDer_v \xi=v\hook d\xi+d(v\hook \xi)
\]
for any vector field \(v\), and in particular for \(v\in\LieH\).
As a \(G_0\)-equivariant \(V\)-valued differential form, \(r_h^* \xi=\rho_V(h)^{-1}\xi\) for any \(h\in G_0\).
Hence for \(v\in\LieG_0\),
\[
\LieDer_v \xi = -\rho_V(v)\xi,
\]
\begin{lemma}\label{lemma:v.hook}
On \Slovak{} cochains, \(0=v\hook\dS+\dS v\hook\) for \(v\in\LieG_0\).
\end{lemma}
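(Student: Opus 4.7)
The plan is to prove the identity directly on the $V$-valued differential form $\xi$ that represents a \Slovak{} cochain on $E$ through a local lift, then observe that both $v\hook{}$ and $\dS$ descend to the $G_0$-equivariant quotient, so the resulting identity transfers to \Slovak{} cochains.

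First I would expand the definition $\dS = d + \rho_V(ω0)\wedge$ on both sides and group terms:
\[
v\hook\dS\xi + \dS(v\hook\xi) = \bigl(v\hook d\xi + d(v\hook\xi)\bigr) + v\hook\bigl(\rho_V(ω0)\wedge\xi\bigr) + \rho_V(ω0)\wedge(v\hook\xi).
\]
Then I would apply Cartan's magic formula to rewrite the parenthesis as $\LieDer_v\xi$, and apply the Leibniz rule for interior product across the pairing $\LieGL{V}\otimes V\to V$:
\[
v\hook\bigl(\rho_V(ω0)\wedge\xi\bigr) = \rho_V(v\hook ω0)\,\xi - \rho_V(ω0)\wedge(v\hook\xi),
\]
so that the last two terms of the display cancel, leaving $\LieDer_v\xi + \rho_V(v\hook ω0)\,\xi$.

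Two final identifications then close the argument. Since $v\in\LieG_0\subseteq\LieH$ and the Cartan connection $\omega$ reproduces every element of $\LieH$, we have $v\hook\omega=v$, hence $v\hook ω0 = v$. From the $G_0$-equivariance of $\xi$ (recorded in the paragraph immediately preceding the lemma) we have $\LieDer_v\xi = -\rho_V(v)\xi$. The two remaining terms therefore cancel, giving the claimed identity.

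The only point requiring a little care is keeping track of signs in the Leibniz rule for interior product across the wedge-with-action pairing $\rho_V(ω0)\wedge\xi$; because $\rho_V(ω0)$ is a $1$-form, the sign is the usual one and there is no genuine obstacle. The calculation is essentially the standard observation that the interior product supercommutes with a covariant exterior derivative along invariant directions, adapted here to the pseudoconnection $ω0$.
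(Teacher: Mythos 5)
Your proposal is correct and follows essentially the same route as the paper's proof: Cartan's magic formula, the Leibniz rule for the interior product against $\rho_V(\om0)\wedge\xi$, the reproducing property $v\hook\om0=v$ for $v\in\LieG_0$, and the $G_0$-equivariance identity $\LieDer_v\xi=-\rho_V(v)\xi$. The only cosmetic difference is that you compute the anticommutator directly, whereas the paper computes $v\hook\dS\xi$ and recognizes the result as $-\dS(v\hook\xi)$.
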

\begin{proof}
If we let \(\eta=v\hook\xi\), we find
\begin{align*}
v\hook\dS\xi
&=
v\hook(d\xi+\rho_V(ω0)\wedge\xi),
\\
&=
-d(v\hook\xi)+\LieDer_v\xi+\rho_V(v)\xi-\rho(ω0)(v\hook\xi),
\\
&=
-d\eta-\rho_V(v)\xi+\rho_V(v)\xi-\rho(ω0)\eta,
\\
&=
-\dS\eta.
\end{align*}
\end{proof}
Note that
\begin{align*}
\dSω-&=dω- + \rho_{\LieG_-}(ω0)\wedgeω-,\\
\dSω0&=dω0 + \frac{1}{2}\lb{ω0}{ω0},\\
\dSω+&=dω+ + \rho_{\LieG_+}(ω0)\wedgeω+.
\end{align*}
In particular, \(\dS\) has components which can raise indices \((p,q,r)\) by
\begin{align*}
\left.
\begin{matrix}
(1,0,0)
\end{matrix}
\right\}&\text{ if  }p>0,\\
\left.
\begin{matrix}
(1,-1,1)\\
(2,-1,0)
\end{matrix}
\right\}&\text{ if }q>0\\
\left.
\begin{matrix}
(2,0,-1)\\
(0,0,1)
\end{matrix}
\right\}&\text{ if }r>0\\
\left.
\begin{matrix}
(1,0,0)\\
(0,0,1)
\end{matrix}
\right\}&\text{ for \(\dS{F}\)}.
\end{align*}
So \(\dS\) takes
\[
\map{
\vb{V}^{p,q,r}
}
{
\vb{V}^{p+2,q-1,r}\oplus
\vb{V}^{p+2,q,r-1}\oplus
\vb{V}^{p+1,q,r}\oplus
\vb{V}^{p+1,q-1,r+1}\oplus
\vb{V}^{p,q,r+1}%
}
\]
and takes
\[
\map{
\vb{V}^{\ge p,\le r}_{p+q+r}
}
{
\vb{V}^{\ge p,\le r+1}_{p+q+r+1}%
}
,
\]
so descends to the quotient, the \emph{\Slovak{} differential}
\[
\map[d]{\gForms[V]{p}[q]{r}}{\gForms[V]{p}[q]{r+1}}.
\]
In the quotient, \(\rho_V(ω0)\wedge=0\), so we can write \(\dS\) as \(d\).
The \Slovak{} differential is the quotient of the exterior derivative, because the connection form term drops out, but the Cartan geometry identifies sections with forms so the cohomology depends on the Cartan geometry.
Hence \(d^2=0\), i.e. the \Slovak{} differential is a differential, with cohomology the \emph{\Slovak{}} cohomology
\[
\cohomology{p,q,r}{M,\vb{V}}.
\]

When we compute in the quotient, all curvature terms lie in the denominator so
\begin{align*}
\dSω-&=-\rho_{\LieG_-}(ω+)\wedgeω-,\\
\dSω0&=a(ω+,ω-),\\
\dSω+&=-\frac{1}{2}\lb{ω+}{ω+}.
\end{align*}
If we write out a cochain \(\xi\) as
\[
\xi=Fω-^{p'}\wedgeω0^{q'}\wedgeω+^{r'}
\]
then we can write
\[
\dS F = F^-ω- + F^0ω0 + F^+ω+.
\]
In the quotient we fully expand out
\[
\dS\xi
=
F^+ω+ ʌω-^{p'} ʌω0^{q'} ʌω+^r
+
F(\dSω- ʌ\dots).
\]
The differential equation of \Slovak{} closure in the quotient, as equations on \(F\), form a constant coefficient linear equation, depending only on the infinitesimal model and on \(F^+\) and \(F\) at each point.

\begin{example}
For projective connections, the action of \(h\in G_+=\C[n*]\) on \(ω0=(\omega^i_j)\) is
\[
r_h^*\omega^i_j=\omega^i_j+(\delta^i_j h_k+\delta^i_k h_j)\omega^k.
\]
\end{example}
\begin{lemma}
If a \Slovak{} cycle is \(G_+\)-invariant, then it is closed.
\end{lemma}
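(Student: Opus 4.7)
The plan is to mimic Lemma~\ref{lemma:v.hook}, applying Cartan's magic formula to fundamental vector fields $v\in\LieG_+$ rather than $v\in\LieG_0$. The key observations are that, for $v\in\LieG_+$, the fundamental vector field on $E$ satisfies $v\hook\omega_-=0$, $v\hook\omega_0=0$ and $v\hook\omega_+=v$. The vanishing of $v\hook\omega_0$ makes the correction $\rho_V(\omega_0)\wedge$ in $\dS=d+\rho_V(\omega_0)\wedge$ cancel out exactly as in the proof of Lemma~\ref{lemma:v.hook}, yielding
\[
v\hook\dS\xi+\dS(v\hook\xi)=\mathcal{L}_v\xi
\]
as forms on $E$. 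Under the $G_+$-invariance hypothesis, $\mathcal{L}_v\xi=0$ for every $v\in\LieG_+$.

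The second ingredient is the $\omega_+$-degree bookkeeping of the Slovak grading. Contraction $v\hook$ strictly lowers the $\omega_+$-degree by one, while each of $\dS\omega_-=-\rho_{\LieG_-}(\omega_+)\wedge\omega_-$, $\dS\omega_0=a(\omega_+,\omega_-)$ and $\dS\omega_+=-\frac{1}{2}\lb{\omega_+}{\omega_+}$ raises the $\omega_+$-degree by exactly one. So starting from a representative of $\xi\in\gLm{p}[q]{r}$ with top $\omega_+$-degree $r$, the form $v\hook\xi$ has top $\omega_+$-degree $r-1$, and $\dS(v\hook\xi)$ has top $\omega_+$-degree $r$; in particular $\dS(v\hook\xi)$ lies in the denominator of $\gLm{p}[q]{r+1}$ and the Cartan identity, read at the level of the class $v\hook[d\xi]\in\gLm{p}[q]{r}$, reduces to $v\hook[d\xi]=-d[v\hook\xi]$ in $\gLm{p}[q]{r}$.

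The conclusion $[d\xi]=0$ in $\gLm{p}[q]{r+1}$ then follows by induction on $r$. The base case $r=0$ is immediate: $v\hook\xi=0$ already on $E$, so the identity collapses to $v\hook[d\xi]=0$ in $\gLm{p}[q]{0}$ for every $v\in\LieG_+$, and the elementary fact that an element of $\Lm{1}{\LieG_+^*}$ with zero single-element contractions must be zero gives $[d\xi]=0$. For the inductive step I would verify that $v\hook\xi$ inherits a suitable form of $G_+$-invariance from $\xi$, allowing the inductive hypothesis at level $r-1$ to give $d[v\hook\xi]=0$ in $\gLm{p}[q]{r}$, hence $v\hook[d\xi]=0$ for every $v\in\LieG_+$, and finally $[d\xi]=0$ in $\gLm{p}[q]{r+1}$ by the linear-algebra fact that a form in $\Lm{r+1}{\LieG_+^*}$ with all single-element contractions zero must vanish.

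The main obstacle is the inductive step. The naive contraction $v\hook\xi$ is not literally $G_+$-invariant, because $\mathcal{L}_u(v\hook\xi)=\lb{u}{v}\hook\xi$ with $\lb{u}{v}\in\LieG_+$ in general nonzero, so some care is needed to formulate the right inductive hypothesis---either by enriching it to cover all families of contractions $w\hook\xi$ at once, or by working modulo Slovak coboundaries so that the obstruction $\lb{u}{v}\hook\xi$ is absorbed. Once the induction is closed correctly, the rest of the argument is the clean interplay between $v\hook$ and the $\omega_+$-grading described above.
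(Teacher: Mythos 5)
You have reproduced the paper's starting point: the Cartan-formula identity \(v\hook\dS\xi=-\dS(v\hook\xi)\) for \(v\in\LieG_+\), valid because \(v\hook\omega_0=0\) and \(\LieDer_v\xi=0\), together with the \(\omega_+\)-degree bookkeeping. You have also correctly located the crux --- the contraction \(v\hook\xi\) is not itself \(G_+\)-invariant, since \(\LieDer_u(v\hook\xi)=\lb{u}{v}\hook\xi\) --- but you stop exactly there, offering two speculative repairs without carrying either out. That is a genuine gap, and one of the two proposed repairs is wrong in spirit: working ``modulo \Slovak{} coboundaries'' cannot help, because the statement asserts that a cochain is closed, not that a cohomology class vanishes, so there are no coboundaries available to absorb anything.

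The missing idea, which is the entire content of the paper's proof beyond the Cartan formula, is that no invariance of the iterated contractions is needed: one only needs invariance \emph{modulo terms the associated graded forgets}, and that is automatic. The paper iterates the contraction \(r+1\) times directly (no induction on \(r\)). The principal term is \(\pm\dS\) applied to \((v_1,\dots,v_{r+1})\hook\xi\), which vanishes identically because \(\xi\) carries only \(r\) factors of \(\omega_+\) and each \(v_i\in\LieG_+\) annihilates \(\omega_-\) and \(\omega_0\). The error terms produced at each stage are precisely the bracket terms \(\lb{u}{v}\hook\xi\) you identified, and each carries strictly fewer factors of \(\omega_+\) than the graded piece being computed, so each dies in the quotient \(\gLm{p}[q]{r+1}=\Lambda^{\ge p,\le r+1}_{p+q+r+1}/\Lambda^{\ge p,\le r}_{p+q+r+1}\) in which the \Slovak{} differential is defined. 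Since every \((r+1)\)-fold contraction by vectors of \(\LieG_+\) of the top graded piece of \(\dS\xi\) then vanishes, that piece vanishes, which is the assertion. Your inductive reformulation is an unnecessary detour; once you observe that the obstruction lives in the part of the filtration that the quotient kills, the direct iteration closes the argument in one step.
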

\begin{proof}
Repeating the argument from~\vref{lemma:v.hook}, if \(\xi\) is an \(G_+\)-invariant cochain, then \(0=v\hook\dS\xi+\dS(v\hook\xi)\) for any \(v\in\LieG_+\).

Take two vectors \(v_1,v_2,\in\LieH_+\).
Note that \(v_1\hook\xi\) is not \(H_+\)-invariant, but the ``error''
\[
\LieDer_{v_2}(v_1\hook\xi)=\lb{v_2}{v_1}\hook\xi,
\]
is lower order, i.e. has \(r-1\) copies of \(ω+\).
Similarly when we repeatedly hook, we don't get invariant differential forms, but the ``error'' vanishes in the quotient.
Hooking in \(r+1\) vectors \(v_1,\dots,v_{r+1}\in\LieH_+\),
\[
(v_1,\dots,v_{r+1})\hook\dS\xi
=
\pm\dS((v_1,\dots,v_{r+1})\hook\xi)=0
\]
in the quotient.
So in the quotient, \(\dS\xi=0\).
\end{proof}
\begin{lemma}\label{lemma:what.is.Cech}
On \Slovak{} cochains \(\xi\) in \(\gForms[V]{p}[q]{0}\), i.e. \(p,q,r\) with \(r=0\), \(\dS\xi=0\) just when \(\xi\) is \(G_+\)-invariant and hence the pullback of a holomorphic section of \(\vb{V}^{p,q,0}\defeq V\otimes \Omega^p_M \otimes \ad{\qE}^{\otimes q}\) from \(\qE=E/G_+\).
\end{lemma}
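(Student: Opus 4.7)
The plan is to prove the equivalence $\dS\xi = 0 \iff \xi$ is $G_+$-invariant, after which the descent to a section of $\vb{V}^{p,q,0}$ on $\qE$ is essentially definitional. The preceding lemma already supplies the implication that $G_+$-invariance implies $\dS\xi = 0$, so my work is the converse and the final identification.

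To show $\dS\xi = 0 \Rightarrow \xi$ is $G_+$-invariant, I would exploit the $r=0$ hypothesis decisively. Representing $\xi = F\omega_-^{p'}\wedge\omega_0^{q'}$ with $p'+q'=p+q$, $p'\ge p$ and $r'=0$, there are no $\omega_+$ factors, and since $v\hook\omega_-=v\hook\omega_0=0$ for $v\in\LieG_+$, this gives $v\hook\xi = 0$ identically. Next, because $V$ carries the trivial $G_+$-action, $\rho_V(v) = 0$ for $v\in\LieG_+$; combined with $v\hook\omega_0 = 0$, the Cartan formula together with $\dS = d + \rho_V(\omega_0)\wedge$ unpacks, exactly as in the proof of \vref{lemma:v.hook}, to
\[
\LieDer_v\xi = v\hook\dS\xi + \dS(v\hook\xi) = v\hook\dS\xi.
\]
Hence $\dS\xi = 0$ forces $\LieDer_v\xi = 0$ for every $v\in\LieG_+$, and integrating along the simply connected group $G_+$ yields $r_{h_+}^*\xi = \xi$ for every $h_+\in G_+$.

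For the descent, a form on $E$ that is $G_+$-invariant and semibasic for $E\to\qE$ (both of which we have, the latter since $v\hook\xi = 0$ for $v\in\LieG_+$) is the pullback of a unique form $\bar\xi$ on $\qE = E/G_+$; the $G_0$-equivariance of $\xi$ becomes $\qH$-equivariance of $\bar\xi$, since $\qH = G_0$. By the previous lemma on smooth reductions of structure group, $\omega_0$ descends to a genuine $(1,0)$-connection on $\qE$, so $\omega_-$ and $\omega_0$ together span $T^*\qE$, and $\bar\xi$ decomposes uniquely as $\sum_{p'\ge p} F_{p'}\,\omega_-^{p'}\wedge\omega_0^{p+q-p'}$ with $G_0$-equivariant coefficients. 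Reading off this data as a section of $V\otimes\Omega^p_M\otimes\ad_{\qE}^{\otimes q}$ on $\qE$, via the soldering identification of $\omega_-^p$ with $\Omega^p_M$ and the connection identification of $\omega_0^q$ with $\ad_{\qE}^{\otimes q}$, completes the argument.

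The main obstacle I anticipate is this last identification: $G_+$-invariance of $\xi$ on $E$ imposes relations among the higher-$p'$ coefficients $F_{p'}$, and one must verify that these relations are precisely the redundancy needed so that all of $\bar\xi$ is captured by one section of $V\otimes\Omega^p_M\otimes\ad_{\qE}^{\otimes q}$. The conceptual heart of the lemma, however, is the equivalence $\dS\xi = 0 \iff \xi$ is $G_+$-invariant; once that is in place, the section identification is a bookkeeping exercise against the definitions set up earlier in the paper.
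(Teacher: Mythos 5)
Your proposal is correct and follows essentially the same route as the paper: the paper's proof is a chain of equivalences reducing \(\dS\xi=0\) in the quotient to \(0=v\hook(d\xi+\omega_0\wedge\xi)\) for \(v\in\LieG_+\), then via Cartan's formula (using \(v\hook\xi=0\) and \(v\hook\omega_0=0\)) to \(\LieDer_v\xi=0\), and finally to \(G_+\)-invariance by connectedness of \(G_+\) — exactly your computation. The only cosmetic difference is that you import one direction from the preceding lemma while the paper runs both directions through a single chain of equivalences, and the paper, like you, treats the final identification with sections of \(\vb{V}^{p,q,0}\) over \(\qE\) as immediate from the definitions.
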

\begin{proof}
The denominator in the definition of \(\gForms[V]{p}[q]{0}\) vanishes, i.e. its sections are differential forms.
Clearly the following are equivalent:
\begin{itemize}
\item
The differential \(\nabla\xi\) vanishes in \(\gForms[V]{p}[q]{1}\) 
\item
\(d\xi+\omega_0\wedge\xi\) belongs to \(\vb{V}^{\ge p,\le 0}_{p+q+1}\)
\item
\(0=v\hook(d\xi+\omega_0\wedge\xi)\) for any \(v\in\LieG_+\)
\item
\(0=v\hook d\xi\) for any \(v\in\LieG_+\) (since \(v\hook\omega_0=0\) and \(v\hook\xi=0\) already),
\item
\(0=v\hook d\xi+d(v\hook\xi)\) for any \(v\in\LieG_+\) (since \(v\hook\xi=0\) already),
\item
\(0=\LieDer_v \xi\) for any \(v\in\LieG_+\),
\item
\(0=r_{e^{tv}}^* \xi\) for any \(v\in\LieG_+\),
\item
\(0=r_{g_+}^* \xi\) for any \(g_+\in G_+\) (since \(G_+\) is connected).
\end{itemize}
\end{proof}
We have already argued that \(\dS^2=0\), but to be clear we check again:
\begin{lemma}
The \Slovak{} differential \(\dS\) is a differential, i.e. \(\dS^2=0\).
\end{lemma}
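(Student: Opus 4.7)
The plan is to compute $\dS^2\xi$ directly on a lift $\xi$, recognize the result as a curvature-type expression in $\omega_0$, and then observe that each resulting term carries filtration weights that place it inside the denominator used to form $\gLm{p}[q]{r+2}$, so that $\dS^2\xi$ vanishes in the Slovak quotient.

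First, from $\dS\xi = d\xi + \rho_V(\omega_0)\wedge\xi$ together with $d^2=0$, a one-line computation gives
\[
\dS^2\xi \;=\; \rho_V\!\left(d\omega_0 + \tfrac{1}{2}\lb{\omega_0}{\omega_0}\right)\wedge\xi,
\]
where I invoke $\rho_V(\omega_0)\wedge\rho_V(\omega_0) = \tfrac{1}{2}\rho_V(\lb{\omega_0}{\omega_0})$, valid because $\rho_V$ is a Lie algebra representation applied to a $1$-form. This is just the familiar ``$\nabla^{2}=$~curvature'' identity applied to the pseudo-connection $\omega_0$.

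Next, I substitute the structure equations already assembled in \vref{subsection:ConnBundlesShapes}: $d\omega_0 + \tfrac{1}{2}\lb{\omega_0}{\omega_0} = \Omega_0 - \tfrac{1}{2}\lb{\omega_+}{\omega_-}_0 - \tfrac{1}{2}\lb{\omega_-}{\omega_+}_0$ with $\Omega_0 = K_0\omega_-^{2} - \tfrac{1}{2}\lb{\omega_-}{\omega_-}_0$. This rewrites $\dS^2\xi$ as a sum of four terms of the form $\rho_V(\eta)\wedge\xi$: a curvature piece $K_0\omega_-^{2}$, an intrinsic piece $\lb{\omega_-}{\omega_-}_0$, and the two mixed brackets $\lb{\omega_\pm}{\omega_\mp}_0$.

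The decisive step is the filtration bookkeeping. The first two $2$-forms each adjoin two extra $\omega_-$ factors and no $\omega_+$ factors, so wedged against $\xi\in\gForms[V]{p}[q]{r}$ they land in $\Lambda^{\ge p+2,\le r}_{p+q+r+2}$. The two mixed brackets each adjoin one $\omega_-$ and one $\omega_+$, landing in $\Lambda^{\ge p+1,\le r+1}_{p+q+r+2}$. Both of these are contained in $\Lambda^{\ge p,\le r+1}_{p+q+r+2}$, the denominator defining $\gLm{p}[q]{r+2}$, so each term vanishes in the quotient and $\dS^2\xi\equiv 0$ in Slovak cohomology. The only real obstacle is keeping the filtration indices straight: once one verifies that ``adding an $\omega_-$'' strictly raises the $p$-index while ``adding an $\omega_+$'' raises the $r$-index, the containment is immediate and the proof is done.
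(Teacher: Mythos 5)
Your proof is correct and follows essentially the same route as the paper: both reduce $\dS^2$ to the curvature-type expression $\rho_V\bigl(d\omega_0+\tfrac{1}{2}\lb{\omega_0}{\omega_0}\bigr)\wedge$ and then observe that every resulting term carries at most one extra $\omega_+$ factor, hence lands in the denominator $\Lambda^{\ge p,\le r+1}_{p+q+r+2}$ of the target quotient. The paper compresses your four-term filtration bookkeeping into the single observation that $\dS\omega_0=a_V(\omega_-,\omega_+)$ is of type $(1,0,1)$ in the quotient, but the content is the same.
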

\begin{proof}
Since
\begin{align*}
\dS^2
&=
(d+ω0\wedge)(d+ω0\wedge),
\\
&=
d^2+d(ω0\wedge)+ω0ʌd + ω0^2,
\\
&=
0+(dω0)\wedge-(ω0ʌd)+ω0ʌd + ω0^2,
\\
&=
(dω0+ω0^2)\wedge,
\\
&=
\rho_V(\dSω0)\wedge,
\\
&
=a_V(ω-,ω+)\wedge,
\end{align*}
is \((1,0,1)\), we see that on the quotient \(\dS^2=0\).
\end{proof}
\begin{lemma}\label{lemma:right.H.plus}
The \Slovak{} differential is \(H\)-invariant.
So \Slovak{} cohomology is an \(H\)-module.
\end{lemma}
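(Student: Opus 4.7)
The plan is to verify that for every $h \in H$, the right translation $r_h^*$ commutes with the Slovak differential $\dS = d + \rho_V(\omega_0)\wedge$ modulo the denominator of the codomain $\gForms[V]{p}[q]{r+1}$. Since $d$ commutes with pullbacks, the commutator collapses to
\[
r_h^*(\dS\xi) - \dS(r_h^*\xi) = \rho_V\!\pr{r_h^*\omega_0 - \omega_0}\wedge r_h^*\xi.
\]
The problem reduces to checking that $r_h^*\omega_0 - \omega_0$ has the right bigrading so that this wedge with $r_h^*\xi \in \Lambda^{\ge p, \le r}_{p+q+r}$ lands in the denominator $\Lambda^{\ge p, \le r}_{p+q+r+1}$ of the codomain.

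For $h = g_0 \in G_0$, one has $r_{g_0}^*\omega_0 = \Ad_{g_0}^{-1}\omega_0$, so the correction is a $\LieG_0$-valued $1$-form of pure grading $(0,1,0)$; wedging with $r_{g_0}^*\xi$ preserves both the $\omega_-$- and the $\omega_+$-degree, landing in $\Lambda^{\ge p, \le r}_{p+q+r+1}$, the denominator. This is the infinitesimal $G_0$-equivariance already implicit in the preceding lemmas.

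For $h = h_+ \in G_+$, the transformation matrix collected in \vref{subsection:ConnBundlesShapes} reads $r_{h_+}^*\omega_0 - \omega_0 = -\pi_0(h_+)\,\omega_-$, a $\LieG_0$-valued $1$-form of pure grading $(1,0,0)$. Wedging with $r_{h_+}^*\xi$ raises the $\omega_-$-degree by one, producing a form in $\Lambda^{\ge p+1, \le r}_{p+q+r+1} \subset \Lambda^{\ge p, \le r}_{p+q+r+1}$, again the denominator. Hence the commutator dies in the Slovak quotient.

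Since $H = G_0\ltimes G_+$, combining both cases gives $r_h^*\dS - \dS r_h^* \equiv 0$ in the Slovak quotient for every $h \in H$, so the natural $H$-action on Slovak cochains descends to an $H$-action on Slovak cohomology. The main obstacle is arranging the bookkeeping so that each correction has a bigrading that drops it into the denominator of the codomain; this is automatic from the explicit transformation formulas for $r_h^*\omega$ under the two factors of the Langlands decomposition, each of which produces either an $\omega_-$-factor (from $G_+$) or only an $\omega_0$-factor (from $G_0$).
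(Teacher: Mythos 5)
Your proof is correct and follows essentially the same route as the paper: the commutator of \(r_{h_+}^*\) with \(\dS\) reduces to the error term \(\rho_V\!\left(\pi_0 h_+\,\omega_-\right)\wedge\), which has type \((1,0,0)\) and therefore lands in the denominator of the Slovak quotient. The only difference is that you also spell out the \(G_0\) case explicitly, which the paper leaves implicit because all the objects involved are \(G_0\)-equivariant by construction.
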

\begin{proof}
Under right \(H_+\)-action on \(E\), on \(\vb{V}^{p,q,r}\),
\begin{align*}
r_{h_+}^*\dS
&=
r_{h_+}^*(d+\rho_V(ω0)\wedge),
\\
&=
dr_{h_+}^*
+
\rho_V(r_{h_+}^*ω0)ʌr_{h_+}^*,
\\
&=
dr_{h_+}^*
+
\rho_V(ω0-\pi_0h_+ω-)ʌr_{h_+}^*,
\\
&=
\dS r_{h_+}^*
-
\rho_V(\pi_0h_+ω-)ʌr_{h_+}^*,
\end{align*}
has ``error term'' \(\rho_V(\pi_0h_+ω-)\wedge\) of type \((1,0,0)\), so vanishing in the quotient, i.e. \(\dS\) becomes \(H\)-invariant in the quotient.
\end{proof}
Since the fibers of \(\map{E/G_0}{M}\) are contractible and Stein, the sheaf cohomology of any holomorphic vector bundle over \(M\) pulls back by isomorphism when we pull back vector bundles to \(E/G_0\) \cite{Buchdahl:1983} p. 365.
\begin{lemma}
On any Stein open set in \(M\), the \Slovak{} cohomology is trivial in positive degree.
It is therefore isomorphic to \Cech{} cohomology of the vector bundle \(\vb{V}^{p,q,0}\defeq  V\otimes \Omega_M \otimes \ad{\qE}\) over the image in \(M\) of that open set.
\end{lemma}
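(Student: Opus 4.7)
Fix a Stein open $U \subset M$ and a pair $(p,q)$; the plan is to realize the \Slovak{} complex $\gForms[V]{p}[q]{\bullet}$ on $U$ as an acyclic resolution of $\vb{V}^{p,q,0}$ and then invoke Cartan's Theorem B. The augmentation is already supplied by Lemma~\ref{lemma:what.is.Cech}: the kernel of $\dS$ on $\gForms[V]{p}[q]{0}$ consists of the $G_+$-invariant cochains, which descend to holomorphic sections of $\vb{V}^{p,q,0}$ on $M$.

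For acyclicity of the resolving sheaves, I would observe that each $\gForms[V]{p}[q]{r}$ is the sheaf of $G_0$-equivariant holomorphic maps $E \to \gLm[V]{p}[q]{r}$, equivalently the sheaf of holomorphic sections of a finite-rank holomorphic vector bundle on $E/G_0$. Since the fibers of $E/G_0 \to M$ are $H/G_0 \cong G_+$, biholomorphic to a complex affine space by the Langlands decomposition, $E/G_0|_U$ is Stein over Stein $U$. Applying Cartan's Theorem B on $E/G_0|_U$ together with the Buchdahl pullback isomorphism cited just above the lemma forces the sheaf cohomology of each $\gForms[V]{p}[q]{r}$, restricted to $U$, to vanish in positive degree.

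The main technical step is local exactness of $\dS$ in positive $r$. In the quotient, $\dS$ acts by $\dS\omega_- = -\rho_{\LieG_-}(\omega_+) \wedge \omega_-$, $\dS\omega_0 = a(\omega_+,\omega_-)$, $\dS\omega_+ = -\tfrac{1}{2}\lb{\omega_+}{\omega_+}$, together with the ordinary differentiation of the coefficient $F$; up to this action on $F$, it is a Koszul-style wedging operator in the nilpotent direction $\omega_+$. I would construct a contracting homotopy by radial integration along each fiber $G_+ \cong \C[N]$ of $E/G_0 \to M$, exploiting that $G_+$ is simply connected solvable and biholomorphic to affine space, so that the integral produces a $G_0$-equivariant holomorphic primitive. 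Combining the three ingredients, $\gForms[V]{p}[q]{\bullet}$ is an acyclic resolution of $\vb{V}^{p,q,0}$ on $U$, and its total cohomology is therefore $H^{\bullet}(U, \vb{V}^{p,q,0})$, which by Theorem B is $\Gamma(U, \vb{V}^{p,q,0})$ in degree zero and vanishes otherwise, coinciding with the \Cech{} cohomology in the statement. The principal obstacle is this Poincar\'e-type lemma for $\dS$: one must track the mixing of genuine exterior differentiation of $F$ with Lie-algebraic wedging by $\omega_+$, and verify that the radial homotopy respects the trigrading and the $G_0$-equivariance simultaneously.
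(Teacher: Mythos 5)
Your overall strategy is sound and overlaps heavily with the paper's, but the two arguments diverge at exactly the step you flag as the principal obstacle. The paper does not build an equivariant contracting homotopy along the \(G_+\)-fibers. Instead it notes that the preimage of \(E/G_0\) in \(E\) is Stein (by Serre's theorem on Stein fibrations, via \cite{Matsushima/Morimoto:1960}), that the \Slovak{} differential in the quotient is just the exterior derivative there, so any \(\dS\)-closed cochain of positive degree \(r\) admits some holomorphic primitive on that Stein set; the primitive need not be \(G_0\)-equivariant, and the paper restores equivariance by averaging over the Haar measure of a maximal compact subgroup of the reductive group \(G_0\). This unitarian trick is precisely the device that dissolves the difficulty you worry about at the end: you do not need the radial homotopy itself to respect \(G_0\)-equivariance, only to produce \emph{some} primitive, after which averaging (legitimate because \(G_0\) is reductive, with Zariski-dense compact form, and the equivariance condition is linear in the primitive) yields an equivariant one. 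Your acyclic-resolution framing (augmentation via Lemma~\ref{lemma:what.is.Cech}, Cartan B on the Stein space \(E/G_0\) restricted over \(U\), then the abstract \deRham{} theorem) is a perfectly good packaging and buys a cleaner derivation of the second sentence of the lemma than the paper makes explicit.

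Two cautions on your homotopy as stated. First, contracting the fibers of \(E/G_0\to M\) radially requires a basepoint in each fiber, i.e.\ a section, which exists locally but is an extra choice you would have to carry through; the paper's route avoids this. Second, even granting an \(\exp\)-linearization of \(G_+\) in which the \(G_0\)-action is linear and commutes with radial scaling, you must still check that the integration operator is compatible with the quotient defining \(\gLm{p}[q]{r}\) (the terms \(\dS\om{-}=-\rho_{\LieG_-}(\om{+})\w\om{-}\) and \(\dS\om{+}=-\tfrac12\lb{\om{+}}{\om{+}}\) mean you are contracting a Chevalley--Eilenberg-twisted fiberwise \deRham{} complex, not a bare Koszul complex). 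Both points are fixable, but if you adopt the paper's averaging argument you can drop the equivariance requirement on the homotopy entirely and reduce the analytic content to the holomorphic Poincar\'e lemma on a Stein set.
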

\begin{proof}
We can assume that \(M\) is Stein and hence \(E/G_0\) is Stein.
By a theorem of Serre \cite{Matsushima/Morimoto:1960} p.~142 Proposition 5, the preimage of \(E/G_0\) in \(E\) is Stein.
We have seen that the \Slovak{} differential in the quotient \(\gForms[V]{p}[q]{r}\) is the exterior derivative.
So on any Stein open set in \(E\), any \Slovak{}-closed form of positive degree \(r\) is \(d\)-exact.
Since the group \(G_0\) is reductive, it contains a maximal compact real subgroup which is Zariski dense.
Averaging over the Haar measure of this subgroup, we ensure that any \Slovak{}-closed form is \(G_0\)-invariantly exact.
\end{proof}

\subsection{Characteristic classes}
We define as above the Chern classes, Chern--Simons classes, Atiyah class, and so on, in \Slovak{} cohomology.
Note that the Chern--Simons forms vanish as \Slovak{} classes, except perhaps for the final term
\[
\CS{f}=f(ω0,dω0^{k-1})
\]
if \(f\) is a homogeneous polynomial on \(\LieG_0\) of degree \(k\), because other terms have lower grade \(r\) in the trigrading \(p,q,r\).
Therefore these classes are much easier to compute in \Slovak{} cohomology than Dolbeault Chern--Simons classes of holomorphic principal or vector bundles more generally.
\begin{remark}
All of the theorems we have stated above about Chern classes and Chern--Simons classes in Dolbeault cohomology have the same proofs in \Slovak{} cohomology.
\end{remark}
\begin{example}
If a holomorphic vector bundle is the pull back of the universal bundle from a holomorphic map to the Grassmannian, or more generally to some complex manifold with a holomorphic Grassmannian geometry, we can use the above apparatus to define a kind of \Slovak{} cohomology for that bundle, which maps then to the Dolbeault cohomology, taking the pulled back \Slovak{} Chern classes to the usual Chern classes in Dolbeault cohomology.
\end{example}

\section{\texorpdfstring{\Slovak{} to Dolbeault}{Slovak to Dolbeault}}%
\label{section:CdBshapes}
\subsection{The Dolbeault map}
Take a complex homgeneous space \((X,G)\) with point \(x_0\in X\), let \(H\defeq G^{x_0}\), take a Langlands decomposition \(H=G_0\ltimes G_+\) and take a holomorphic \((X,G)\)-Cartan geometry \(H \to E \to M\).
Recall we let \(\qE\defeq E/G_+\), a holomorphic principal right \(G_0\)-bundle \(\bundle*{G_0}{\qE}{M}\).
Recall that since \(H/G_0\) is contractible, \(\map{E/G_0}{M}\) admits a \smooth section \(\map[s]{M}{E/G_0}\) i.e. a \smooth \(G_0\)-reduction of structure group.
The pullback
\[
\begin{tikzcd}
s^*E \arrow{r} \arrow{d} & E \arrow{d} \\
M \arrow{r} & E/G_0
\end{tikzcd}
\]
makes \(s^*E\) a \(G_0\)-bundle equivariantly mapped to \(E\), and composing \(s^* E \to E \to \qE\) gives an isomorphism of \(G_0\)-bundles.
The inverse isomorphism is a \(G_0\)-equivariant map which we also denote by \(\map[s]{\qE}{E}\).
Conversely, every \(G_0\)-equivariant map \(\map[s]{\qE}{E}\) quotients to a reduction \(\map[s]{M}{E/G_0}\).

The pullback by this map \(\map[s]{\qE}{E}\) takes \(ω-\) to a semibasic \((1,0)\)-form \(s^*ω-\), since \(ω-\) is a semibasic \((1,0)\)-form on \(E\).
Since \(s^*ω0\) is a \((1,0)\)-connection, it is in particular a \((1,0)\)-form.
However, the pullback \(s^*ω+\) may have \((1,0)\) and \((0,1)\) parts.
Fix a \(G_0\)-module \(V\).
Recall that \(\vb{V}^{p,q,r}\) is the collection of holomorphic differential forms \(\xi\) on \(E\) valued in the associated vector bundle \(\vb{V}\), which have type \((p,q,r)\) in the Langlands decomposition \(\LieG=\LieG_-\oplus\LieG_0\oplus\LieG_+\).
So the pullback \(s^*\xi\) of \(\xi\) to \(s^*E\) has Dolbeault decomposition into parts
\[
(p+q+r,0), (p+q+r-1,1), \dots, (p+q,r).
\]
To each such \(\xi\), we associate the \(\vb{V}\)-valued \((p+q,r)\)-form \(\Dlb{\xi}=(s^*\xi)^{p+q,r}\) defined as the \((p+q,r)\)-part of the pullback to \(\qE\):
\[
\mapto{\vb{V}^{p,q,r}}{\vb{V}^{p+q,r}}.
\]
This map takes
\[
\map{
\vb{V}^{\ge p,\le r}_{p+q+r}
}
{
\vb{V}^{p+q,r}%
}
.
\]
Hence it takes
\[
\map{
\vb{V}^{\ge p,\le r-1}_{p+q+r}
}
{
0%
}
.
\]
So it is defined on the quotient.
The resulting object \(\Dlb{\xi}\) is a \smooth{} section of
\[
\vb{V}\otimes\Omega^{p,r}\otimes (\ad \qE)^{*\otimes q}.
\]
The Dolbeault map preserves wedge products.
The \Slovak{} operator is taken to
\begin{align*}
\Dlb*{\dS\xi}
&=
(s^*\dS\xi)^{p+q,r+1},
\\
&=
(s^*d\xi + \rho_V(s^*ω0)ʌs^*\xi)^{p+q,r+1},
\\
&=
(ds^*\xi)^{p+q,r+1},
\\
&=
((\partial+\bar\partial) s^*\xi)^{p+q,r+1},
\\
&=
\bar\partial (s^*\xi)^{p+q,r},
\\
&=
\bar\partial\,\Dlb{\xi}.
\end{align*}
Hence the \Slovak{} operator becomes the Dolbeault operator, and we map \Slovak{} cohomology to Dolbeault cohomology.
The Dolbeault isomorphism then maps \Slovak{} cohomology to sheaf cohomology
\[
\cohomology{r}{M,\vb{V}\otimes\Omega^p\otimes (\ad \qE)^{*\otimes q}}.
\]
In particular, when \(0=p=q\) we compute in sheaf cohomology
\(
\cohomology{r}{M,\vb{V}}.
\)

\subsection{Changing reduction}
Take two \(C^{\infty}\) reductions of structure group, i.e. \(C^{\infty}\) sections \(\map[s_0,s_1]{M}{E/G_0}\).
They are equivalent to \(G_0\)-equivariant maps \(\map[s_0,s_1]{\qE}{E}\), so \(s_1=s_0g_+\) for a unique \(C^{\infty}\) \(\map[g_+]{\qE}{G_+}\) with
\[
r^*_{g_0}g_+=\Ad_{g_0}^{-1} g_+
\]
for \(g_0\in G_0\).
\begin{lemma}\label{lemma:H.plus.action}
\(s_1^*\omega
=
\Ad_{g_+}^{-1} s_0^*  \omega
+
g_+^{-1} dg_+.\)
\end{lemma}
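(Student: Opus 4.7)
The plan is to reduce this to the standard transformation law of a Cartan connection under a change of local section, treating $s_1 = s_0 \cdot g_+$ as the value of the multiplication map $\mu \colon E \times G_+ \to E$ on the pair $(s_0, g_+)$ and applying the product rule. Given $x \in \qE$ and a tangent vector $v \in T_x \qE$, I would pick a smooth curve $\gamma(t)$ in $\qE$ with $\gamma(0) = x$ and $\gamma'(0) = v$, set $e(t) \defeq s_0(\gamma(t))$ and $h(t) \defeq g_+(\gamma(t))$, and write $s_1(\gamma(t)) = e(t) \cdot h(t)$.

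First, differentiating at $t=0$ via the partial derivatives of $\mu$ gives
\[
s_1'(x) v = R_{h(0) *} e'(0) + L_{e(0) *} h'(0),
\]
where $R_{h(0)}$ is right translation by $h(0) = g_+(x)$ on $E$ and $L_{e(0)} \colon G_+ \to E$ sends $h \mapsto e(0) \cdot h$. I would then apply $\omega$ to each term separately. For the first, $H$-equivariance of the Cartan connection, $r_h^* \omega = \Ad_h^{-1} \omega$ for $h \in H$, yields
\[
\omega_{e(0) h(0)}(R_{h(0) *} e'(0)) = \Ad_{g_+(x)}^{-1}\bigl(v \hook s_0^* \omega\bigr).
\]
For the second, I would factor $h(t) = h(0) \cdot k(t)$ with $k(0) = 1$ and $k'(0) = h(0)^{-1} h'(0) = (g_+^{-1} dg_+)(v) \in \LieG_+$; then $L_{e(0) *} h'(0)$ is the value at $e(0) h(0)$ of the fundamental vector field for $k'(0)$, and the defining identity $w \hook \omega = w$ for $w \in \LieH$ gives
\[
\omega_{e(0) h(0)}(L_{e(0) *} h'(0)) = (g_+^{-1} dg_+)(v).
\]
Summing the two pieces yields the stated formula.

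The only real obstacle is the bookkeeping: one must carefully distinguish the right translation $R_{h(0)}$ on $E$, on which $\omega$ transforms by $\Ad^{-1}$, from the fiber-direction contribution $L_{e(0) *} h'(0)$, which is a fundamental vector field and on which $\omega$ acts as the identity. Once the product rule isolates these two contributions and each is matched to the appropriate defining property of the Cartan connection, the identity is immediate with no further computation.
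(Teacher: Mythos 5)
Your proposal is correct and follows essentially the same route as the paper: the paper also differentiates $s_1 = s_0\,g_+$ along a curve, splits the derivative by the product rule into a right-translate of $s_0'$ plus the fundamental vector field of $u = g_+^{-1}dg_+(v)$, and then applies $r_{g_+}^*\omega = \Ad_{g_+}^{-1}\omega$ to the first piece and the reproducing property $w \hook \omega = w$ for $w \in \LieH$ to the second. No substantive difference.
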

\begin{proof}
Pick a point \(\qe_0\in \qE\), a vector \(v \in T_{\qe_0} \qE\) and a curve \(\qe(t)\) with \(\qe(0)=\qe_0\) and \(\qe'(0)=v\).
Let
\[
u\defeq\left.\frac{d}{dt}\right|_{t=0} g_+(\qe_0)^{-1}g_+(\qe(t))
= \pr{g_+(\qe_0)^{-1}}'_{g_+(\qe_0)}g'_+(\qe_0)v.
\]
Compute
\begin{align*}
s_1'(\qe_0)v
&=
\left.\frac{d}{dt}\right|_{t=0}
s_1(\qe(t)),
\\
&=
\left.\frac{d}{dt}\right|_{t=0}
s_0(\qe(t))g_+(\qe(t)),
\\
&=
\left.\frac{d}{dt}\right|_{t=0}
s_0(\qe(t))g_+(\qe_0)
+
\left.\frac{d}{dt}\right|_{t=0}
s_0(\qe_0)g_+(\qe(t)),
\\
&=
r'_{g_+(\qe_0)}(s_0(\qe_0))s_0'(\qe_0)v
+
u(s_1(\qe_0)).
\end{align*}
Hence
\begin{align*}
s_1^*\omega(v)
&=
\omega(s_1'(\qe_0)v),
\\
&=
\omega(r'_{g_+(\qe_0)}(s_0(\qe_0))s_0'(\qe_0)v
+
u(s_1(\qe_0))),
\\
&=
(s_0^* r_{g_+(\qe_0)}^* \omega)(v)
+u
\\
&=
(s_0^* r_{g_+(\qe_0)}^* \omega)(v)
+
g_+^{-1} dg_+(v),
\\
&=
(s_0^* \Ad_{g_+(\qe_0)}^{-1} \omega)(v)
+
g_+^{-1} dg_+(v),
\\
&=
(\Ad_{g_+(\qe_0)}^{-1} s_0^* \omega)(v)
+
g_+^{-1} dg_+(v).
\end{align*}
\end{proof}
Expressed in components of the Cartan connection,
\[
s_1^*
\begin{pmatrix}
ω-\\
ω0\\
ω+
\end{pmatrix}
=
\rho(g_+)^{-1}
s_0^*
\begin{pmatrix}
ω-\\
ω0\\
ω+
\end{pmatrix}
+
\begin{pmatrix}
0\\
0\\
g_+^{-1}dg_+
\end{pmatrix}
\]
where
\[
\rho(g_+)^{-1}
=
\begin{pmatrix}
\pi_-g_+^{-1}&0&0\\
-\pi_0g_+&I&0\\
(I-\Ad_{g_+}^{-1})\pi_0g_++\Ad_{g_+}^{-1}\pi_+g_+&\Ad_{g_+}^{-1}-I&\Ad_{g_+}^{-1}
\end{pmatrix}.
\]

\section{\texorpdfstring{\Cech--Dolbeault cohomology}{Cech Dolbeault cohomology}} %
\label{section:CdB}
We briefly recall \Cech--Dolbeault cohomology \cite{Abate2013}, since we need the associated notation below.
Fix a complex manifold \(M\), a holomorphic vector bundle \(\map{\vb{V}}{M}\), and a cover \(\set{M_a}\) by open sets \(M_a \subset M\).
To each tuple \(A\defeq\pr{a_0,a_1,\dots,a_k}\) assign
\[
M_A \defeq M_{a_0} \cap M_{a_1} \cap \dots \cap M_{a_k}
\]
and let
\[
\drop{i}{A} \defeq \pr{a_0,a_1,\dots,\hat{a}_i,\dots,a_k}.
\]

Let \(M_k\) be the disjoint union of all choices of \(k\)-fold overlaps:
\[
M_k \defeq \bigsqcup M_A,
\]
for \(A=\pr{a_0,a_1,\dots,a_k}\).
Each point of \(M_k\) is a pair \((x,A)\), where \(x \in M_A\).
Define local biholomorphisms
\[
\mapto[\inci]{(x,A) \in M_k}{(x,\drop{i}{A}) \in M_{k-1}},
\]
for \(i=0,1,\dots,k\):
\[
\tikzcdset{arrow style=tikz,diagrams={>={Computer Modern Rightarrow[scale=0.8,width=3pt]}}}
\begin{tikzcd}
\cdots\arrow[r]\arrow[r,shift left=1]\arrow[r,shift right=1]\arrow[r,shift left=2]\arrow[r,shift right=2]&
M_2\arrow[r]\arrow[r,shift left=1.5]\arrow[r,shift right=1.5]&
M_1\arrow[r,shift left]\arrow[r,shift right]&
M_0\arrow[r]&
M
\end{tikzcd}
\]
The \Cech{} differential \(\delta\) is
\[
\delta \xi= \sum_{i=0}^k (-1)^i \inci^* \xi
\]
for \(\xi\) on \(M_k\) a \smooth \(\vb{V}\)-valued differential form.

Let \(\cpx{M}\) be the disjoint union of the \(M_k\).
A \smooth \(\vb{V}\)-valued \emph{\((p,q)\)-form} on \(\cpx{M}\) is a \smooth \(\vb{V}\)-valued \((p,q-k)\) form on each \(M_k\), changing signs when we permute the entries in any \(A=(a_0,\dots,a_k)\).
Let \((-1)^{\bullet}\) be the function \((-1)^k\) on \(M_k\).
The \Cech--Dolbeault differential is \(\CdB=\delta + (-1)^{\bullet} \bar\partial\).
The \Cech--Dolbeault \((p,q)\)-cochains of \(\vb{V}\) for the given open cover are the \smooth \(\vb{V}\)-valued \((p,q)\)-forms on \(\cpx{M}\).
For example
\[
\CdB \set{\xi_a,\xi_{ab},\dots} \defeq \set{\bar\partial\xi_a,\xi_b-\xi_a-\bar\partial\xi_{ab},\dots}.
\]
The \Cech--Dolbeault differential takes \((p,q)\)-cochains to \((p,q+1)\)-cochains, and satisfies \(\CdB^2=0\).
For any Stein cover, the usual Dolbeault cohomology is isomorphic to \Cech--Dolbeault cohomology by \(\mapto{\xi}{\set{\xi_a,\xi_{ab},\dots}\defeq\set{\xi,0,0,\dots}}\), see \cite{Abate2013}.

Take a complex Lie group \(H\) and a holomorphic principal right \(H\)-bundle \(H \to E \to M\).
Suppose that each \(E_a \defeq \left.E\right|_{M_a}\) has a \smooth complex linear connection \(1\)-form \(ωa\).
The \emph{Atiyah class} of \(\map{E}{M}\) is the \Cech--Dolbeault class of
\[
a(M,E)=\set{\xi_a,\xi_{ab},\xi_{abc},\dots}
=
\set{\bar\partialωa,ωa-ωb,0,0,\dots}=\CdB\set{ωa,0,0,\dots}.
\]
Careful: \(a(M,E)\) is written here as an \(\ad{E}\)-valued \((1,1)\)-chain on \(M\), and as \(\CdB\) of a cochain, but not as \(\CdB\) of an \(\ad{E}\)-valued \((1,0)\)-chain on \(M\); only of such a cochain on \(E\).
Nonetheless, this makes clear that \(a(M,E)\) is \(\CdB\)-closed on \(M\), as it is \(\CdB\)-exact on \(E\), and \(\CdB\) commutes with pullback.
This class is independent of the choice of connections and vanishes if and only if \(\map{E}{M}\) admits a holomorphic connection \cite{Abate2013}.

\section{Czechoslovak cohomology}
We develop a \Cech{} variant of \Slovak{} cohomology.
Fix a holomorphic principal right \(H\)-bundle \(\map{E}{M}\) and a cover \(\set{M_a}\) of \(M\) by open sets \(M_a \subset M\).
Adopt notation as for \Cech--Dolbeault cohomology.
Let \(E_A\defeq \left.E\right|_{M_A}\).
Pick a complex homogeneous space \((X,G)\) with Langlands decomposition.
On \(\map{E}{M}\), pick a holomorphic \((X,G)\)-geometry with Cartan connection \(\omega\).
A holomorphic section of \(\gForms[V]{p}[q]{r}\) on \(\cpx{M}\) is a holomorphic section of \(\gForms[V]{p}[q]{r-k}\) on each \(M_k\) which changes sign when two indices are permuted in a tuple \(A=(a_0,\dots,a_k)\):
\[
\gForms[V]{p}[q]{r}_{\cpx{M}}
\subset
\gForms[V]{p}[q]{r}_{M_0}
\oplus
\gForms[V]{p}[q]{r-1}_{M_1}
\dots
\oplus
\gForms[V]{p}[q]{0}_{M_r}.
\]
Let \(F\) be the function \(F=k\) on \(M_k\).
The \emph{Czechoslovak differential} is \(\CSl=\delta + (-1)^{\bullet}\dS\).
The \emph{Czechoslovak} \((p,q,r)\)-cochains of \(\vb{V}\) for the given open cover are holomorphic sections of \(\gForms[V]{p}[q]{r}\) on \(\cpx{M}\).
For example
\[
\CSl \set{\xi_a,\xi_{ab},\dots} \defeq \set{\dS\xi_a,\xi_b-\xi_a-\dS\xi_{ab},\xi_{bc}+\xi_{ca}+\xi_{ab}+\dS\xi_{abc},\dots}.
\]
\begin{lemma}
The Czechoslovak differential takes \((p,q,r)\)-cochains to \((p,q,r+1)\)-cochains, and satisfies \(\CSl^2=0\).
\end{lemma}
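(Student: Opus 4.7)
The plan is to verify separately (a) that $\CSl$ lands in the right space, and (b) that $\CSl^{2}=0$. Claim (a) is pure degree-and-symmetry bookkeeping, and claim (b) will follow from three inputs: $\delta^{2}=0$, the earlier identity $\dS^{2}=0$, and a naturality commutation $\delta\dS=\dS\delta$, with the sign $(-1)^{\bullet}$ engineered to cancel the cross terms.

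For (a), I would unwind the definition: a $(p,q,r)$-cochain is a system $\set{\xi_{a_0\cdots a_k}}$ of holomorphic sections of $\gForms[V]{p}[q]{r-k}$ on $M_{(a_0,\dots,a_k)}$, antisymmetric under permutation of the indices. Since each $\inci$ is an open inclusion, the pullback $\inci^{*}$ preserves the local trigrading, so the \Cech{} part $\delta=\sum_i(-1)^i\inci^{*}$ sends the $M_{k-1}$-component of local grade $r-(k-1)=r-k+1$ to an $M_k$-section of local grade $r-k+1=(r+1)-k$, which is exactly what a $(p,q,r+1)$-cochain requires on $M_k$. The operator $\dS$ raises the local grade on each $M_k$ by one, also producing local grade $(r+1)-k$, and acts identically in every chart, so preserves antisymmetry in the indices. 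Together these two facts give the first claim.

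For (b), I would expand
\[
\CSl^{2}=\delta^{2}+\delta\,(-1)^{\bullet}\dS+(-1)^{\bullet}\dS\,\delta+\bigl((-1)^{\bullet}\dS\bigr)^{2}.
\]
Then $\delta^{2}=0$ by the usual simplicial identity, and $\bigl((-1)^{\bullet}\dS\bigr)^{2}=(-1)^{2\bullet}\dS^{2}=0$ by the lemma $\dS^{2}=0$ already proved in the excerpt. Acting on a component on $M_k$, the sign $(-1)^{\bullet}$ contributes $(-1)^k$ when applied before $\delta$ raises the chart index to $k+1$, and $(-1)^{k+1}$ when applied after, so the two cross terms collapse to $(-1)^{k}(\delta\dS-\dS\delta)$ on $M_k$.

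The only remaining input, and the only step calling for any care, is the commutation $\delta\dS=\dS\delta$. This is pure naturality: each $\inci$ is an inclusion of a smaller overlap into a larger one, so the bundle $E$, the Cartan connection $\omega$, and hence $ω0$ all restrict consistently, and $\dS=d+\rho_V(ω0)\wedge$ commutes with $\inci^{*}$ because both $d$ and the pullback of $\omega$ commute with open-set restriction. Summing over $i$ with alternating signs yields $\delta\dS=\dS\delta$, and the cross terms cancel. The main obstacle, such as it is, is simply keeping the sign conventions straight in the four-term expansion; the algebraic content mirrors the verification that $\CdB^{2}=0$ in \Cech--Dolbeault cohomology recalled in \vref{section:CdB}.
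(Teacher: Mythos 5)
Your proof is correct and follows essentially the same route as the paper's: expand \(\CSl^2\) into four terms, kill \(\delta^2\) and \(\dS^2\), and cancel the cross terms using the anticommutation of \((-1)^{\bullet}\) with \(\delta\). You are in fact slightly more complete than the paper, whose proof stops at the two surviving cross terms without explicitly invoking the commutation \(\delta\dS=\dS\delta\) (naturality of \(\dS\) under restriction to open subsets) that you correctly single out as the one remaining input, and you also spell out the degree bookkeeping for the first claim, which the paper only sketches.
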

\begin{proof}
Writing \(A=(a_0,\dots ,a_k)\), take a holomorphic local section \(\xi\) of \(\gForms[V]{p}[q]{r-k}\) on each \(M_A\), say represented by a local section of \(\vb{V}^{p,q,r-k}\).
So
\begin{align*}
(\delta \xi)_{(x,A)}
&=
\sum_{i=0}^k(-1)^i (\inci^* \xi)_{(x,A)},
\\
&=
\sum_{i=0}^k(-1)^i \xi_{(x,\drop{i}{A})},
\end{align*}
increasing value of \(r\) by \(1\).
We note that \((-1)^{\bullet}\delta=-\delta(-1)^{\bullet}\), as \(\delta\) decreases the number of indices.
\begin{align*}
(\CSl^2\xi)_{(x,A)}
&=
((\delta+(-1)^{\bullet}\dS)^2\xi)_{(x,A)},
\\
&=
(\delta^2\xi)_{(x,A)}
+
(\delta(-1)^{\bullet}\dS\xi)_{(x,A)}
+
(\dS(-1)^{\bullet}\delta\xi)_{(x,A)}
+
(\dS^2\xi)_{(x,A)},
\\
&=
(\delta(-1)^{\bullet}\dS\xi)_{(x,A)}
-
((-1)^{\bullet}\dS\delta\xi)_{(x,A)}.
\end{align*}
\end{proof}
The \emph{Czechoslovak cohomology}
\[
\cohomology{p,q,r}{\cpx{M},\vb{V}}
\]
is the cohomology of the Czechoslovak differential.
Note that it will in general depend on the choice of open cover \(\cpx{M}\) of \(M\).
The \emph{\u{C}ech map} is
\[
\mapto{\xi \in \gForms[V]{p}[q]{r}}{\set{\xi_a, \xi_{ab}, \dots}
\defeq
\set{\xi,0,0,\dots}}.
\]
\begin{theorem}
The \Cech{} map is a cochain map from \Slovak{} cochains to Czechoslovak cochains, injective in cohomology.
\end{theorem}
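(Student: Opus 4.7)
The plan is to verify the cochain-map property by a direct calculation, then establish injectivity on cohomology via a zig-zag in the Czechoslovak double complex, leveraging the local Slovak acyclicity on Stein opens already proved.

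For the cochain property, I would take a globally defined Slovak cochain $\xi\in\gForms[V]{p}[q]{r}$ and apply the Czechoslovak differential to its image $\{\xi,0,0,\ldots\}$. The component on $M_0$ is $\dS\xi$; the component on $M_1$ is $\xi|_{M_b}-\xi|_{M_a}-\dS 0$, which vanishes because $\xi$ is globally defined and so its restrictions agree on overlaps; the higher components vanish trivially. Hence $\CSl\{\xi,0,0,\ldots\}=\{\dS\xi,0,0,\ldots\}$, the Cech image of $\dS\xi$, so the map commutes with differentials.

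For injectivity in cohomology, suppose $\xi$ is a global Slovak cocycle whose Cech image is Czechoslovak-exact, i.e.\ $\CSl\eta=\{\xi,0,0,\ldots\}$ for some Czechoslovak $(p,q,r-1)$-cochain $\eta=\{\eta_a,\eta_{ab},\ldots\}$. Unfolding gives the cascade $\dS\eta_a=\xi$ on each $M_a$, $\eta_b-\eta_a=\dS\eta_{ab}$ on each $M_{ab}$, and so on down to $\delta\eta^{(r-1)}=0$. The strategy is to modify $\eta$ by successive Czechoslovak coboundaries $\CSl\beta$, which leave the Czechoslovak class fixed, so as to annihilate the components $\eta^{(k)}$ for $k\geq 1$ inductively from the top Cech index downward. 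Once reduced to $\tilde\eta=\{\tilde\eta_a,0,0,\ldots\}$, the equation on $M_1$ forces $\tilde\eta_b=\tilde\eta_a$ on every overlap, so $\tilde\eta$ descends to a global Slovak cochain $\zeta$ with $\dS\zeta=\xi$, whence $[\xi]=0$.

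The hard part will be the inductive cancellation: at each step the obstruction is a Cech cocycle in the presheaf of local Slovak cochains, and its Cech-exactness must be extracted from the system above. This is done by combining the Slovak Poincar\'e lemma on Stein opens (the lemma immediately preceding the definition of Czechoslovak cohomology) with Cartan's Theorem B applied to the coherent sheaf $\vb{V}\otimes\Omega^p_M\otimes(\ad\qE)^{\otimes q}$ of $\dS$-closed Slovak-degree-zero cochains, identified in \vref{lemma:what.is.Cech}. With a Stein cover in hand, this is the standard double-complex zig-zag, and amounts to the two spectral sequences of the Czechoslovak double complex degenerating to the same sheaf cohomology abutment on $M$.
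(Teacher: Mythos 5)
Your verification that the \Cech{} map commutes with the differentials is exactly the paper's computation, so the first half needs no comment. Where you part company with the paper is injectivity. The paper's argument is two lines and imposes no hypothesis on the cover: from \(0=\eta_b-\eta_a-\dS\eta_{ab}\) it applies \(\dS\) and uses \(\dS^2=0\) to conclude \(\dS\eta_a=\dS\eta_b\) on overlaps, so the local primitives \(\eta_a\) have a single globally defined differential equal to \(\xi\) --- and it stops there, without attempting to glue the \(\eta_a\) themselves. You have correctly sensed that for \(r\ge 2\) one really wants a \emph{global} \Slovak{} primitive \(\zeta\) with \(\dS\zeta=\xi\), and your zig-zag is the standard way to manufacture one: modify \(\eta\) by Czechoslovak coboundaries to kill the higher \Cech{} components from the top down. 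That is a genuinely different, and in this respect more careful, route than the paper's.

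The difficulty is the one you flag yourself. Each cancellation step requires a \Cech{} cocycle valued in local \Slovak{} cochains to be a \Cech{} coboundary, and you obtain this only for a Stein cover; moreover the acyclicity you need is for the sheaves of \emph{all} \Slovak{} cochains \(\gForms[V]{p}[q]{k}\) (holomorphic sections of bundles over the preimages in \(E/G_0\), handled by the Buchdahl/Serre argument of \vref{lemma:Stein.H.zero}), not only for the sheaf \(\vb{V}^{p,q,0}\) of \(\dS\)-closed degree-zero cochains to which you propose applying Theorem B. The theorem is stated for an arbitrary cover, and the paper emphasizes that Czechoslovak cohomology genuinely depends on the cover precisely because this acyclicity can fail; so as written your argument proves the statement only under an additional hypothesis. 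What your machinery buys in exchange, when the cover is Stein, is much more than injectivity: it is essentially the proof of \vref{proposition:CS.to.C}, the full isomorphism of Czechoslovak cohomology with \Cech{} cohomology. To match the theorem as stated you should either reproduce the paper's short cover-independent manipulation (accepting its weaker conclusion that the local primitives merely share a common differential), or state explicitly that your proof of injectivity assumes an acyclic cover.
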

\begin{proof}
Write the \u{C}ech map as \(\mapto{\xi}{\Cch{\xi}}\).
So
\begin{align*}
\Cch{\xi}_a &= \xi, \\
\Cch{\xi}_{ab} &= 0, \\
\Cch{\xi}_{abc} &= 0, \\
&\vdots
\end{align*}
Using \(\dS\xi\) in place of \(\xi\),
\begin{align*}
\Cch*{\dS\xi}_{a} &= \dS\xi, \\
\Cch*{\dS\xi}_{ab} &= 0, \\
\Cch*{\dS\xi}_{abc} &= 0, \\
&\vdots
\end{align*}
So
\begin{align*}
(\CSl\Cch{\xi})_a &= \dS\Cch{\xi}_a = \dS\xi \\
(\CSl\Cch{\xi})_{ab} &= \Cch{\xi}_b-\Cch{\xi}_a-\dS\Cch{\xi}_{ab} = 0, \\
(\CSl\Cch{\xi})_{abc} &= 0+(-1)^2 0=0, \\
&\vdots
\end{align*}
using the hypothesis that \(\dS\xi=0\) and \(\xi_a=\xi_b=\xi\) is globally defined.
So \(\mapto{\xi}{\Cch{\xi}}\) is a cochain map.

Suppose that \(\Cch{\xi}=\CSl\eta\).
Then
\begin{align*}
\Cch{\xi}_a &= \xi = \dS\eta_a, \\
\Cch{\xi}_{ab} &= 0 = \eta_b-\eta_a-\dS\eta_{ab}, \\
&\vdots
\end{align*}
So \(0=\dS\eta_b-\dS\eta_a-\dS^2\eta_{ab}\), i.e. \(\dS\eta_a\) is globally defined, and \(\xi=\dS\eta_a\).
Therefore the \Cech{} map is injective in cohomology.
\end{proof}

\subsection{Cup product}
The \emph{cup product} of Czechoslovak classes \(\xi,\eta\) is
\[
(\xi\cup\eta)_{a_0\dots a_k}
\defeq
\sum_{j=0}^k
(-1)^{(p+q+r-j)(k-j)}
\xi_{a_0\dots a_j}\wedge\eta_{a_j\dots a_k}.
\]
if \(\xi\) is a section of \(\gForms[V]{p}[q]{r}\) and \(\eta\) a section of \(\gForms[W]{p}[q]{r}\), so \(\xi\cup\eta\) is a section of \(\gForms[V\otimes W]{p+p'}[q+q']{r+r'}\).
Note that \(\CSl(\xi\cup\eta)=(\CSl\xi)\cup\eta+(-1)^{p+q+r}\xi\cup\CSl\eta\) as in \cite{Suwa:2000}.

\subsection{Quotient cochains}
A \emph{quotient cochain} is a sequence
\[
\set{\xi_{ab}, \eta_{ab}, \eta_{abc}, \dots}
\in
\gForms[V]{p}[q]{r}_{M_1}
\oplus
\gForms[V]{p}[q]{r-1}_{M_1}
\dots
\oplus
\gForms[V]{p}[q]{0}_{M_r}
\]
with \(\xi_{ab}\in\gForms[V]{p}[q]{r}\), \(\eta_{ab}\in\gForms[V]{p}[q]{r-1}\) both on \(M_{ab}\), but after the first step, \(\eta_A\) is just a usual Czechoslovak cochain.
Map each Czechoslovak cochain
\[
\set{\xi_a, \xi_{ab}, \dots}
\]
to a quotient cochain by
\[
\mapto
{
\set{\xi_a, \xi_{ab}, \dots}
}
{
\set{\xi_a-\xi_b, \xi_{ab}, \dots}%
}
.
\]
The kernel consists precisely of the image of the \Slovak{} cochains in the Czechoslovak cochains.
On the quotient cochains, we use the same Czechoslovak differential:
\[
\CSl\set{\xi_{ab}, \eta_{ab}, \eta_{abc}, \dots}
=
\set{\dS\xi_{ab}, \xi_{ab}-\dS\eta_{ab}, \eta_{bc}+\eta_{ca}+\eta_{ab}+\dS\eta_{abc},\dots}.
\]
The map to quotient cochains makes a short exact sequence of complexes: \Slovak{} cochains to Czechoslovak cochains to quotient chains.
\begin{lemma}
Suppose that all triple overlaps of sets \(M_a\) in our cover are empty.
Then the quotient cohomology is trivial, so \Slovak{} cohomology is isomorphic to Czechoslovak cohomology.
\end{lemma}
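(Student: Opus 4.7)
The plan is to invoke the short exact sequence
\[
0 \to \text{Slovak} \to \text{Czechoslovak} \to \text{Quotient} \to 0
\]
of cochain complexes recorded just above the lemma and to show that, under the hypothesis on the cover, the quotient complex is acyclic; the associated long exact sequence in cohomology then collapses and forces the natural inclusion of Slovak into Czechoslovak cochains to induce an isomorphism.

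First I would observe that emptiness of all triple overlaps means $M_k = \emptyset$ for every $k \geq 2$, so a quotient cochain of Czechoslovak degree $(p,q,r)$ truncates to a single pair $\set{\xi_{ab}, \eta_{ab}}$, with $\xi_{ab}$ a holomorphic section of $\gForms[V]{p}[q]{r}$ and $\eta_{ab}$ of $\gForms[V]{p}[q]{r-1}$ on each double overlap $M_{ab}$. All further entries in the sequence live on empty overlaps, so the Czechoslovak differential on quotient cochains collapses to
\[
\CSl\set{\xi_{ab}, \eta_{ab}} = \set{\dS\xi_{ab},\, \xi_{ab} - \dS\eta_{ab}}.
\]

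Next I would unwind the cocycle condition: $\set{\xi_{ab}, \eta_{ab}}$ is closed exactly when $\xi_{ab} = \dS\eta_{ab}$ on $M_{ab}$, the identity $\dS\xi_{ab}=0$ following automatically from $\dS^2 = 0$ (available from the earlier lemma). Given such a cocycle, the quotient cochain $\set{\eta_{ab}, 0}$ of degree $(p,q,r-1)$ satisfies
\[
\CSl\set{\eta_{ab}, 0} = \set{\dS\eta_{ab},\, \eta_{ab}} = \set{\xi_{ab}, \eta_{ab}},
\]
exhibiting the cocycle as a coboundary, and killing the quotient cohomology.

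The only real bookkeeping point is the boundary cases $r = 0$ and $r = 1$, where one of the two slots is forced to be absent and the ``shift down by one'' used to build the primitive must be reinterpreted: for $r = 0$ the cocycle equation degenerates to $\xi_{ab} = 0$ (the second slot of $\CSl\set{\xi_{ab}}$ is $\xi_{ab}$ itself), so trivially exact, while for $r = 1$ the primitive is the single slot $\set{\eta_{ab}}$. I do not expect any serious obstacle beyond this bookkeeping; the only substantive content is the explicit primitive $\set{\eta_{ab}, 0}$, and combining its construction with the long exact sequence in cohomology attached to the short exact sequence of complexes yields at once the stated isomorphism between Slovak and Czechoslovak cohomology.
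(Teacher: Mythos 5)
Your proposal is correct and follows essentially the same route as the paper: the paper also reduces a quotient cocycle to the pair $\set{\xi_{ab},\eta_{ab}}$, reads off $\xi_{ab}=\dS\eta_{ab}$ from closedness, and exhibits the primitive $\zeta=\set{\eta_{ab},0}$ with $\CSl\zeta=\set{\dS\eta_{ab},\eta_{ab}-\dS 0}=\xi$, then concludes via the short exact sequence of complexes. Your extra remarks on the degenerate cases $r=0,1$ and the explicit appeal to the long exact sequence are harmless elaborations of the same argument.
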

\begin{proof}
Take a quotient cocycle
\[
\xi=\set{\xi_{ab}, \eta_{ab}}.
\]
The condition that \(\xi\) is a cocycle is
\[
0=\CSl\xi=\set{\dS\xi_{ab}, \xi_{ab}-\dS\eta_{ab}}.
\]
So \(\xi_{ab}=\dS\eta_{ab}\).
Let
\[
\zeta=\set{\eta_{ab},0},
\]
so
\[
\CSl\zeta=\set{\dS\eta_{ab},\eta_{ab}-\dS0}=\xi.
\]
\end{proof}
\begin{lemma}\label{lemma:zig.zag}
For any open cover of \(M\), the \Cech{} cohomology of \(\vb{V}^{p,q,0}\defeq \vb{V}\otimes\nForms{p}{M}\otimes\Lm{q}{\ad\qE}\) of that open cover injects in the Czechoslovak cohomology of \(\gForms[V]{p}[q]{*}\) of the same open cover.
\end{lemma}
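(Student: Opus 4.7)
The plan is to realize the injection at the cochain level as the inclusion of the bottom row of the double complex underlying the Czechoslovak complex, then verify injectivity in cohomology via a zig-zag. A \Cech{} $k$-cocycle $\xi$ of $\vb{V}^{p,q,0}$ for the given cover assigns to each $(k+1)$-fold overlap $M_A$ a section $\xi_A$ of $\vb{V}^{p,q,0}$, equivalently, by \vref{lemma:what.is.Cech}, a $\dS$-closed section of $\gForms[V]{p}[q]{0}$ on $E_A$. I define $\Cch{\xi}$ to be the Czechoslovak $(p,q,k)$-cochain with $\xi$ in the slot of \Cech{} degree $k$ and \Slovak{} degree $0$, and zeros elsewhere. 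Since $\CSl=\delta+(-1)^{\bullet}\dS$ and each $\xi_A$ is $\dS$-closed, the \Cech{} map $\xi\mapsto\Cch{\xi}$ is a chain map.

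For injectivity in cohomology, suppose $\Cch{\xi}=\CSl\eta$ for a Czechoslovak $(p,q,k-1)$-cochain $\eta=(\eta_0,\ldots,\eta_{k-1})$, with $\eta_j$ a \Cech{} $j$-cochain valued in $\gForms[V]{p}[q]{k-1-j}$. Expanding $\CSl$ slot by slot gives the zig-zag equations
\[
\dS\eta_0=0,\qquad \delta\eta_{j-1}=(-1)^{j-1}\dS\eta_j\text{ for }1\le j\le k-1,\qquad \delta\eta_{k-1}=\xi.
\]
The last equation displays $\eta_{k-1}$ as a \Cech{} primitive of $\xi$ in $\gForms[V]{p}[q]{0}$, but $\eta_{k-1}$ need not be \Slovak{}-closed, so does not yet live in $\vb{V}^{p,q,0}$. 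The task is to correct $\eta_{k-1}$ to a \Slovak{}-closed cochain $\zeta$ without changing $\delta\zeta=\xi$.

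The correction is the standard double-complex zig-zag: replace $\eta$ by $\eta-\CSl\beta$ for a suitable Czechoslovak $(p,q,k-2)$-cochain $\beta=(\beta_0,\ldots,\beta_{k-2})$, which leaves $\xi$ fixed in the bottom slot while shifting $\eta_{k-1}$ by the \Cech{}-exact term $\delta\beta_{k-2}$. A calculation using the zig-zag equations shows that the new $\eta_{k-1}$ is \Slovak{}-closed if and only if $\dS\beta_{k-2}-(-1)^k\eta_{k-2}$ is \Cech{}-closed; solving this for $\beta_{k-2}$ requires local \Slovak{}-exactness of $\eta_{k-2}$ in positive degree, and the correction must then be propagated up the ladder by analogous conditions on $\beta_{k-3},\ldots,\beta_0$, terminating consistently because $\dS\eta_0=0$ serves as the base of the induction. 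The main obstacle is securing the local \Slovak{}-exactness at each rung of the ladder; this is supplied by the earlier vanishing of \Slovak{} cohomology on Stein opens, together with averaging over a maximal compact subgroup of $G_0$ to preserve $G_0$-equivariance of each correction. If the given cover is not already Stein, one first passes to a Stein refinement, runs the zig-zag there, and returns to the original cover by naturality of both the \Cech{} and Czechoslovak constructions under refinement.
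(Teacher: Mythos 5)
Your map and your verification that it is a chain map coincide with the paper's. The divergence is in the injectivity argument. The paper's own argument is a one-line degree count: if \(\Cch{\xi}=\CSl\eta\) with \(\eta=(\eta_0,\dots,\eta_{k-1})\), the deepest \Cech{} slot of \(\CSl\eta\) is \(\delta\eta_{k-1}\) plus a \(\dS\) of a component that would have to live in \(\gForms[V]{p}[q]{-1}=0\); so \(\xi=\delta\eta_{k-1}\), and the paper stops there, declaring \(\xi\) to be \(\delta\)-exact. You correctly observe that this leaves a point to check --- \(\eta_{k-1}\) is a priori only a \(\gForms[V]{p}[q]{0}\)-valued cochain, not a \(\dS\)-closed one, so it is not yet a \Cech{} cochain of \(\vb{V}^{p,q,0}\) --- and that is a genuine subtlety which the paper's proof passes over silently. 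But the repair you propose does not prove the lemma as stated.

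The gap is in your last step. Your zig-zag correction needs ``\(\dS\)-closed implies \(\dS\)-exact in positive \Slovak{} degree'' over each overlap, i.e.\ a Stein cover, while the lemma asserts injectivity \emph{for any} open cover. Passing to a Stein refinement and ``returning by naturality'' fails because the refinement maps go the wrong way: both the \Cech{} cohomology of a fixed cover and the Czechoslovak cohomology (which, as the paper notes, depends on the cover) map \emph{to} their counterparts on a refinement, and these maps need not be injective in degrees \(\ge 2\). Showing that \(\xi\) becomes \(\delta\)-exact on the refinement therefore does not show it is \(\delta\)-exact, with a \(\vb{V}^{p,q,0}\)-valued primitive, on the original cover. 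What your argument actually establishes is the Stein-cover case, which is already \vref{proposition:CS.to.C} (where one even gets an isomorphism). To match the paper you should give its short degree argument; if you also want to address the \(\dS\)-closedness of the primitive for an arbitrary cover, you need an argument that does not invoke local \Slovak{}-exactness, since that is exactly the hypothesis the lemma does not grant you.
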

\begin{proof}
By \vref{lemma:what.is.Cech}, on any open set, each \Slovak{} cocycle is identified precisely with a holomorphic section of \(\vb{V}^{p,q,0}\).
Take any \(\eta\) in the \Cech{} cohomology of \(\vb{V}^{p,q,0}\) on \(M\) and identify it with a \Slovak{}-closed form in \(\gForms[V]{p}[q]{0}\) with the same name \(\eta\).
Map it to
\[
\eta'=\set{0,0,\dots,0,\eta_A}.
\]
The differential commutes (up to an irrelevant sign).
So each \(\delta\)-exact \(\eta\) maps to some \(\eta'\) both \(\delta\) and \(\dS\) exact so \(\CSl\)-exact.
Conversely, \(\eta'\) is Czechoslovak exact just when \(\eta=\delta \alpha + \dS\beta\) for some \(\alpha,\beta\).
But \(\eta\) is in \(\gForms[V]{p}[q]{0}\) so \(\beta=0\) and so \(\eta\) is \(\delta\)-exact.
\end{proof}
\begin{remark}
On \(E/G_0\), let 
\begin{align*}
\Omega_-&\defeq(\omega_-,\omega_+), \\
\Omega_0&\defeq\omega_0.
\end{align*}
In this way, the \((X,G)\)-geometry on \(M\) induces an \((X',G')\)-geometry on \(M'\defeq E/G_0\), where \(G'\defeq (\LieG_-\oplus\LieG_+)\ltimes G_0\) and \(H'\defeq G_0\) so \(X'=G'/H'\) with Cartan connection \(\Omega\defeq\Omega_-\oplus\Omega_0\) and Langlands decomposition \(G'_0\defeq G_0\).
By the same argument, for an open covering \(\set{M_a}\) of \(M\), with corresponding preimages \(\set{M'_a}\) in \(M'\), the Czechoslovak cohomology of the \((X',G')\)-geometry for that open covering is also the \Cech{} cohomology of \(\vb{V}^{p,q,0}\) of that covering on \(M'\) or equivalently of the original covering on \(M\).
\end{remark}
\begin{proposition}\label{proposition:CS.to.C}
The Czechoslovak cohomology of \(\gForms[V]{p}[q]{*}\) over any Stein open cover is isomorphic to the \Cech{} cohomology of \(\map{\vb{V}^{p,q,0}}{M}\).
\end{proposition}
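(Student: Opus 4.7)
The plan is to identify the Czechoslovak complex with the total complex of a double complex, filter by \Cech{} degree, and use the earlier lemma on \Slovak{} cohomology over Stein opens to collapse the spectral sequence, pinning down the isomorphism via \vref{lemma:zig.zag}.

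First I would organize the Czechoslovak \((p,q,*)\)-cochains as the total complex of the double complex \(C^{k,s}\defeq\gForms[V]{p}[q]{s}_{M_k}\) of holomorphic sections over the disjoint union \(M_k\) of all \(k\)-fold overlaps, with horizontal differential the \Cech{} differential \(\delta\colon C^{k,s}\to C^{k+1,s}\) and vertical differential (up to \((-1)^{\bullet}\)) the \Slovak{} differential \(\dS\colon C^{k,s}\to C^{k,s+1}\). The total degree \(k+s\) coincides with the Czechoslovak grading index \(r\), and \(\CSl=\delta+(-1)^{\bullet}\dS\) is the total differential; \(\CSl^2=0\) has already been verified.

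Next I would filter by \Cech{} degree and compute the vertical cohomology first. Each nonempty \(k\)-fold overlap \(M_A\) is Stein, being a finite intersection of Stein opens in \(M\), so the earlier lemma on \Slovak{} cohomology over Stein opens applies: the \(\dS\)-cohomology of \(C^{k,*}\) vanishes in positive degree, and in degree \(s=0\) reduces, via \vref{lemma:what.is.Cech}, to the holomorphic sections of \(\vb{V}^{p,q,0}\) on \(M_A\). Hence \(E_1^{k,s}\) is concentrated in the row \(s=0\), where it is exactly the \Cech{} complex of \(\vb{V}^{p,q,0}\) for the given Stein cover. The \(E_2\) page is then \(E_2^{k,0}=\check{H}^k(M,\vb{V}^{p,q,0})\), concentrated in \(s=0\), so the spectral sequence degenerates at \(E_2\) and the total cohomology in degree \(r\) is \(\check{H}^r(M,\vb{V}^{p,q,0})\). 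To complete the identification, I would match the abstract spectral sequence isomorphism with the concrete map \(\eta\mapsto\set{0,\dots,0,\eta_A}\) of \vref{lemma:zig.zag}, which already gives injectivity; surjectivity is the content of the collapse.

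The main obstacle is the bookkeeping: checking that the sign \((-1)^{\bullet}\) is consistent with the standard total-complex conventions for the filtered spectral sequence, and that the identification of \(E_1^{k,0}\) with \Cech{} cochains of \(\vb{V}^{p,q,0}\) is compatible with the horizontal differential induced by \(\delta\). Neither point involves new ideas. If spectral sequences feel heavy here, the alternative is a direct double-complex zig-zag: given a Czechoslovak cocycle \(\set{\xi_A}\), use \Slovak{} exactness on each Stein overlap to kill the top \(r\)-component \(\xi_{a_0}\) by a Czechoslovak coboundary, then induct on the \Cech{} depth at which the remaining cocycle is supported, producing a representative of the form \(\set{0,\dots,0,\eta_A}\) whose Czechoslovak-closure reduces to \(\delta\eta=0\), i.e.\ a \Cech{} cocycle.
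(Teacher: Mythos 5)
Your proposal is correct and matches the paper's argument, which follows Bott--Tu's generalized Mayer--Vietoris principle: the paper carries out exactly the double-complex zig-zag you describe as your ``alternative,'' using \Slovak{} exactness on each Stein overlap to push a Czechoslovak cocycle down to a pure \Cech{} cocycle in \(\vb{V}^{p,q,0}\). The spectral-sequence packaging and the explicit zig-zag are the same argument, so there is nothing essentially different here.
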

\begin{proof}
We follow \cite{Bott/Tu:1982} p. 98 theorem 8.9: take any \(r\)-cocycle \(\xi\).
It lies in our double complex along a diagonal line, with finitely many terms.
Each term \(\xi_k\) maps to the right and up, and the sums or differences vanish, with suitable signs.
The term on the upper left maps to zero by \(\dS\), and that on the lower right maps to zero by \(\delta\).
On a Stein open cover, \(\dS\)-closed implies \(\dS\)-exact on each open set, so the upper left is \(\dS\)-exact: \(\xi_k=\dS\eta_k\).
Replace \(\xi_{k+1}\) by \(\xi_{k+1}\pm \delta(\eta_k)\), we can get this to be \(\dS\)-exact, and so on.
Finally, we leave only \(\xi_r\), i.e. the part arising from \Cech{} cohomology.
\end{proof}
\begin{lemma}\label{lemma:Stein.H.zero}
If \(E/G_0\) is a Stein manifold then the \Slovak{} cohomology group of the cochains \(\gForms[V]{p}[q]{r}\) is isomorphic to Czechoslovak cohomology and hence to the sheaf cohomology group
\[
\cohomology{r}{M,\vb{V}^{p,q,0}}
\]
where \(\map{\vb{V}^{p,q,0}\defeq\vb{V}\otimes\nForms{p}{M}\otimes\Lm{q}{\ad\qE}}{M}\).
\end{lemma}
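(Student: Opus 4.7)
The plan is to view the Slovak complex as a resolution of the sheaf \(\vb{V}^{p,q,0}\) on \(M\), and then exploit the Stein hypothesis to turn this local quasi-isomorphism into an equality of global cohomologies, using the Czechoslovak complex as the intermediary that already has both structures built into it.

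First I would reinterpret the setup sheaf-theoretically: by \vref{lemma:what.is.Cech}, the Slovak differential on \(\gForms[V]{p}[q]{0}\) recovers exactly the holomorphic sections of \(\vb{V}^{p,q,0}\), and by the lemma computing \Slovak{} cohomology on Stein open sets, the complex of sheaves \(\gForms[V]{p}[q]{*}\) on \(M\) is a resolution of \(\vb{V}^{p,q,0}\) in positive degrees, at least on any Stein open subset of \(M\). So the Slovak complex is a complex of sheaves on \(M\) whose only (sheaf) cohomology is \(\vb{V}^{p,q,0}\) in degree zero, provided we restrict to sufficiently small Stein opens.

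Next I would pass to the Czechoslovak double complex for a Stein open cover of \(M\), refining until all multiple intersections are Stein (possible because \(E/G_0\) is Stein and the preimages of Stein open sets of \(M\) inside \(E/G_0\) are Stein by Matsushima--Morimoto, as was used earlier). Running the spectral sequence that takes \(\dS\)-cohomology first collapses to Čech cohomology of \(\vb{V}^{p,q,0}\) by the local acyclicity above, giving exactly \vref{proposition:CS.to.C}. Running the other spectral sequence, that takes \(\delta\)-cohomology first, the \(E_1\) page is Čech of holomorphic Slovak cochains on a Stein cover; since Slovak cochains are holomorphic sections of a holomorphic vector bundle over \(E/G_0\), which is Stein, all higher Čech cohomology in the \(\delta\)-direction vanishes by Cartan's Theorem B. Therefore this spectral sequence collapses to the global Slovak cohomology.

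Both spectral sequences converge to the Czechoslovak cohomology, so the two edge identifications give the asserted isomorphism
\[
\cohomology{r}{\text{Slovak}}\cong\cohomology{r}{\text{Czechoslovak}}\cong\check{H}^{r}\of{M,\vb{V}^{p,q,0}}\cong\cohomology{r}{M,\vb{V}^{p,q,0}}.
\]
The main obstacle I expect is the first step: verifying cleanly that the Slovak complex is a resolution of \(\vb{V}^{p,q,0}\) as sheaves on \(M\) (not merely that its global cohomology is trivial on Stein opens). Once that is established, the double-complex spectral sequence argument is routine; the averaging over a maximal compact subgroup of \(G_0\) used earlier is the technical tool that makes local \(\dS\)-exactness actually \(G_0\)-equivariantly exact, which is what is needed for the identification of sheaves on \(M\) rather than just on \(E\).
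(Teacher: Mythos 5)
Your argument is correct, but it reaches the statement by a different route from the paper's. The paper's proof is a two-line degeneration: take the open cover of \(M\) consisting of the single open set \(M\), for which the Czechoslovak complex \emph{is} the \Slovak{} complex (there are no overlaps), apply \vref{proposition:CS.to.C} to that one-set ``Stein'' cover, and identify the resulting \Cech{} cohomology with sheaf cohomology on \(M\) via Buchdahl's isomorphism \(\cohomology{r}{M,\vb{V}^{p,q,0}}\cong\cohomology{r}{E/G_0,\cdot}\), which together with Cartan~B shows that both sides in fact vanish in positive degree. You instead run the two spectral sequences of the Czechoslovak double complex for a general Stein cover: the \(\dS\)-first one is \vref{proposition:CS.to.C} and lands in \Cech{}, hence (by Leray for a Stein cover of \(M\)) sheaf, cohomology of \(\vb{V}^{p,q,0}\); the \(\delta\)-first one collapses because the \Slovak{} cochain sheaves are holomorphic vector bundles on the Stein manifold \(E/G_0\), so Cartan~B kills the higher \Cech{} terms. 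Your version is longer but makes the role of the hypothesis transparent (Stein-ness of \(E/G_0\) is precisely what collapses the \Cech{} direction), avoids Buchdahl entirely, and shows in passing that the Czechoslovak cohomology is independent of the choice of Stein cover. Two cautions. First, the ``main obstacle'' you flag --- that the \Slovak{} complex is a resolution of \(\vb{V}^{p,q,0}\) as a complex of sheaves rather than merely globally acyclic on Stein opens --- is already supplied by \vref{lemma:what.is.Cech} (identifying the kernel in degree \(r=0\)) together with the Stein-open-set acyclicity lemma applied to a basis of Stein opens, and in any case the \(\dS\)-first collapse is exactly the content of \vref{proposition:CS.to.C}, which the paper proves by an explicit zig-zag, so nothing is missing there. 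Second, Cartan~B must be applied on \(E/G_0\) and not on \(M\): the pushforwards of the \Slovak{} cochain sheaves to \(M\) are not coherent, since the fibres of \(E/G_0\to M\) are positive-dimensional --- you have this right by working with the preimage cover upstairs, but it is the one place the argument could silently go wrong.
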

\begin{proof}
Since the fibers of \(\map{E/G_0}{M}\) are contractible and Stein, the sheaf cohomology of any holomorphic vector bundle over \(M\) pulls back by isomorphism when we pull back vector bundles to \(E/G_0\) \cite{Buchdahl:1983} p. 365.
Just use the open cover consisting of one open set.
\end{proof}

\subsection{Czechoslovak characteristic classes}
The Czechoslovak \emph{Atiyah class} of \(\map{E}{M}\) is the Czechoslovak class image of the \Slovak{} Atiyah class by the \Cech{} map:
\[
a(M,E)
=
\set{\xi_a,\xi_{ab},\xi_{abc},\dots}
=
\set{\dS ω0,0,0,\dots}.
\]

\subsection{\texorpdfstring{Czechoslovak to \Cech{}--Dolbeault}{Czechoslovak to Cech-Dolbeault}}
Take a \smooth{} reduction of structure group \(\map[s_a]{M_a}{E_a/G_0}\).
The \emph{Dolbeault map} on Czechoslovak cohomology is
\[
\Dlb{\set{\xi_a,\xi_{ab},\dots}}
=
\set{\Dlb{\xi_a},\Dlb{\xi_{ab}},\dots}.
\]
By \vref{lemma:H.plus.action}, change in choice of reduction has no effect on the resulting Dolbeault class.
\begin{lemma}
The maps defined above between cochains: \Slovak{}, Dolbeault, Czechoslovak, \Cech--Dolbeault, are commuting morphisms of differential complexes.
\end{lemma}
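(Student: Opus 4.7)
The plan is to check the chain-map property for each of the four arrows and then check commutativity of the square. Two of the chain-map statements have already been proved in the excerpt: the Slovak-to-Dolbeault identity \(\Dlb*{\dS\xi}=\bar\partial\,\Dlb{\xi}\) was the key calculation of Section~\ref{section:CdBshapes}, and the Slovak-to-Czechoslovak \u{C}ech map \(\xi\mapsto\Cch{\xi}=\{\xi,0,0,\dots\}\) was shown to be a cochain map in the theorem preceding this lemma. I would quote both without repeating the computations.

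Next I would handle the Dolbeault-to-\Cech--Dolbeault \u{C}ech map \(\zeta\mapsto\{\zeta,0,0,\dots\}\). Since \(\CdB=\delta+(-1)^{\bullet}\bar\partial\) and \(\delta\{\zeta,0,\dots\}=\{0,\zeta-\zeta,0,\dots\}=0\), one sees immediately that
\[
\CdB\{\zeta,0,0,\dots\}=\{\bar\partial\zeta,0,0,\dots\},
\]
which is the image of \(\bar\partial\zeta\) under the \u{C}ech map. This is the easy corner.

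The main content is the Czechoslovak-to-\Cech--Dolbeault Dolbeault map. Applied component by component, \(\Dlb{\cdot}\) is the operation ``pull back by a \smooth{} reduction \(s_{a}\) on \(M_A\) and project onto \((p+q,r)\)-type''. I would show it is a chain map by checking the two ingredients of \(\CSl\) separately. First, \(\Dlb{\cdot}\) commutes with \(\delta\): the components of \(\delta\) are the pullbacks \(\inci^*\), and pullback by the projection maps \(\inci\) commutes with pullback by \(s_{a}\) and with projection onto bidegree, so \(\Dlb*{\delta\xi}=\delta\,\Dlb{\xi}\). Second, the identity \(\Dlb*{\dS\xi}=\bar\partial\,\Dlb{\xi}\), applied termwise, gives \(\Dlb*{(-1)^{\bullet}\dS\xi}=(-1)^{\bullet}\bar\partial\,\Dlb{\xi}\). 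Adding the two yields \(\Dlb*{\CSl\xi}=\CdB\,\Dlb{\xi}\). The step that requires real care, and the only genuine obstacle, is independence of the Dolbeault map from the collection of local reductions \(\{s_a\}\): different choices of \(s_a\) on the same \(M_a\) change \(\Dlb{\xi_A}\) by terms that, by \vref{lemma:H.plus.action}, are \(\bar\partial\)-exact, hence \(\CdB\)-cohomologous to zero; this is what guarantees the construction descends to cohomology and is compatible with the other maps.

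Finally I would check that the square commutes on cochains. Slovak to Dolbeault to \Cech--Dolbeault sends \(\xi\) to \(\{\Dlb{\xi},0,0,\dots\}\). Slovak to Czechoslovak to \Cech--Dolbeault sends \(\xi\) to \(\{\xi,0,0,\dots\}\) and then to \(\{\Dlb{\xi},\Dlb{0},\dots\}=\{\Dlb{\xi},0,0,\dots\}\). The two agree, so the diagram of cochain complexes commutes, completing the lemma.
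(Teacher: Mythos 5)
The paper states this lemma without proof, so your argument is filling a gap rather than paralleling an existing one; its architecture is the right one. Two of the four arrows are indeed already done in the text (the computation \(\Dlb*{\dS\xi}=\bar\partial\,\Dlb{\xi}\) from \vref{section:CdBshapes} and the \Cech{}-map theorem), the Dolbeault-to-\Cech--Dolbeault corner is the one-line check you give, and the only real content is the Czechoslovak-to-\Cech--Dolbeault arrow together with the commuting square, which you treat correctly in outline.

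One point needs tightening: the claim that \(\Dlb{}\) commutes with \(\delta\). To compare \(\Dlb*{\delta\xi}\) with \(\delta\,\Dlb{\xi}\) on \(M_A\), \(A=(a_0,\dots,a_k)\), the same reduction must be used for \(\xi_A\) and for each restricted \(\xi_{\drop{i}{A}}\). With a genuinely local family \(\set{s_a}\) this fails on the nose: whatever convention assigns a reduction to \(M_A\), at least one face map \(\inci^*\) changes which \(s_a\) is in play, and the discrepancy is only controlled by \vref{lemma:H.plus.action} up to chain homotopy. Your remedy --- that changing \(s_a\) alters \(\Dlb{\xi_A}\) by \(\bar\partial\)-exact terms --- is stated too strongly: \vref{lemma:H.plus.action} gives the transformation law of \(s^*\omega\), and the exactness of the resulting difference is a statement about cohomology classes of \emph{closed} cochains (obtained by deforming one reduction to the other through the contractible fibre \(G_+\)), not a termwise identity on arbitrary cochains. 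The clean fix, available because \(H/G_0\) is contractible so a global \smooth{} reduction \(s\colon M\to E/G_0\) exists, is to use the restrictions of one fixed \(s\) on every \(M_A\); then \(\Dlb{}\) literally commutes with every \(\inci^*\), hence with \(\delta\), and with \((-1)^{\bullet}\dS\mapsto(-1)^{\bullet}\bar\partial\) termwise, so all four maps are chain maps on the nose and the square commutes at the cochain level exactly as in your final paragraph. With that adjustment your proof is complete; the dependence on the choice of \(s\) is then a separate (cohomology-level) statement, which is where \vref{lemma:H.plus.action} genuinely enters.
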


\subsection{Wedge and cup product}
We define the cup product of Czechoslovak cochains \(\xi,\eta\) by
\[
(\xi\cup\eta)_{a_0\dots a_k}
\defeq(-1)^{(p+q+r-j)(k-j)}\xi_{a_0\dots a_j}\wedge\eta_{a_j\dots a_k}
\]
if \(\xi\in\gForms[V]{p}[q]{r}\) and \(\eta\in\gForms[W]{p'}[q']{r'}\), giving a cochain valued in the tensor product.
We can check the Leibnitz identity against the differential.

\subsection{Relative cohomology}
Take a subcollection \(I'\) of indices \(a\) from our collection \(I\) of indices \(a\) of open sets \(\subset\set{M_a}\) from our cover.
The open set
\[
M'\defeq \bigcup_{a\in I'} M_a
\]
has an associated collection of Czechoslovak chains built using the sets \(M_a\) with \(a\in I'\).
A \emph{relative cochain} relative to \(I'\) is a cochain \(\xi\) for which every \(\xi_A\) vanishes if all entries of \(A\) come from \(I'\).
Define relative Czechoslovak cohomology as the cohomology of these.
The obvious short exact sequence from relative to all cochains, and then to \(I'\)-cochains gives a long exact sequence in cohomology.

If the open covers are chosen Stein, then the relative \Slovak{} cohomology (in the sense of a flabby resolution of \Slovak{} cohomology) is isomorphic to the relative Czechoslovak cohomology \cite{Suwa2017} p. 3 Theorem 2.4.

Consider the special case when we have a holomorphic Cartan geometry defined on \(M-S\), for some compact complex subvariety \(S\subset M\).
Let \(M_0\defeq M-S\).
The \emph{\Slovak{} cohomology} of \(S\) is
\[
H^{p,q,r}[S,\vb{V}]\defeq \varinjlim_{M_1\supset S} \cohomology{p,q,r}{\cpx{M},\vb{V}},
\]
where the limit is over open sets \(M_1\subseteq M\) containing \(S\).
We hope to compute residue integrals of Chern--Simons invariants here as in \cite{Abate2013}.

\section{Homogeneous examples}
A \emph{complex homogeneous Cartan geometry} is a complex manifold \(X'\) with a holomorphic Cartan geometry, say modelled on \((X,G)\), invariant under the holomorphic action of a complex Lie group \(G'\), i.e. the Lie group acts as holomorphic bundle automorphisms preserving the Cartan connection, acting transitively on \(X'\).
Pick a point \(e_0\in E\) and let \(x_0\in X'\) be its image in \(X'\) and \(H'\defeq {G'}^{x_0}\).
Each \(g\in H'\) determines an element \(h(g)\in H\) by \(ge_0=e_0h(g)\), and holomorphic Lie group morphism \(\map{H'}{H}\).
Let \(G'_0\defeq h^{-1}G_0\).
Then \(\map{G'/G'_0}{E/G'_0}\) is a principal bundle morphism, pulling the sheaf of \Slovak{} cochains back to a sheaf over \(G'/G'_0\).
As a holomorphic principal \(H\)-bundle over \(X'\), \(E_{X'}=\amal{G'}{H'}{H}\).
A real vector subspace of a complex vector space is \emph{totally real} if it contains no complex line; a real submanifold of a complex manifold is \emph{totally real} if its tangent spaces are totally real in the tangent spaces of the complex manifold.
A complex Lie group is \emph{reductive} if it is the complexification of a compact Lie group.
\begin{theorem}\label{thm:homog}
Take a homogeneous holomorphic Cartan geometry \((X',G')\), with notation as above.
Suppose that \(G'\) is reductive, and that the subgroup \(G'_0=h^{-1}G_0\) of \(G'\) is reductive.
Then \Slovak{} cohomology is isomorphic to Dolbeault cohomology via the Dolbeault map.
\end{theorem}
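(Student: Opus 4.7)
The plan is to reduce both sides to $G'$-invariant cochains by averaging over a maximal compact real form of $G'$, and then to identify the two resulting finite-dimensional complexes via the Dolbeault map. Let $K'\subset G'$ be a maximal compact real subgroup; because $G'$ is reductive, $K'$ exists and is Zariski dense in $G'$. The Cartan connection $\omega$ and the bundle structures on $E\to X'$ are $G'$-invariant, so $G'$ acts on the \Slovak{} cochain complex $\gForms[V]{p}[q]{r}$ and on the Dolbeault complex of $\vb{V}\otimes\Omega^{p+q}\otimes(\ad\qE)^{*\otimes q}$-valued forms. Integration against the normalized Haar measure of $K'$ commutes with $\dS$ and with $\bar\partial$, preserves holomorphy and smoothness respectively, and yields a cochain-level projection onto the $K'$-invariant subcomplex; by Zariski density $K'$-invariance coincides with $G'$-invariance in each complex.

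The first task is to check that averaging is a quasi-isomorphism. Since $K'$ is connected, each $k\in K'$ acts as the identity on cohomology: for $v\in\Lie{K}'$, the homotopy formula $v\hook\dS+\dS(v\hook\,)=0$ on \Slovak{} cochains in the quotient (\vref{lemma:v.hook}) and the usual $\LieDer_v=v\hook\bar\partial+\bar\partial(v\hook\,)$ on Dolbeault cochains exhibit the infinitesimal action of $K'$ as null-homotopic on cohomology. Integrating along one-parameter subgroups therefore shows that averaging is homotopic to the identity, so both cohomologies are computed by their $G'$-invariant subcomplexes.

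Next I would identify those invariant complexes algebraically. A $G'$-invariant \Slovak{} cochain is determined by its value at the basepoint $e_0\in E$, which is an $H'$-invariant element of $\gLm[V]{p}[q]{r}$ under the twisted action $H'\xrightarrow{h}H\to\operatorname{GL}\bigl(\gLm[V]{p}[q]{r}\bigr)$; the \Slovak{} differential restricts to the purely algebraic Chevalley--Eilenberg-style operator built from the structure equations of the model. On the Dolbeault side, reductivity of $G'_0$ provides a $K'$-equivariant smooth section $s\colon X'\to E/G_0$: the compact real form $K'_0\defeq K'\cap G'_0$ is maximal compact in $G'_0$, and $K'/K'_0\hookrightarrow H'/G'_0$ is a $K'$-equivariant deformation retract since the inclusion of a maximal compact subgroup of a reductive group is a homotopy equivalence. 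Pulling back $\omega$ along such an $s$ identifies the $G'$-invariant Dolbeault cochains with the $H'_0$-invariants of an analogous finite-dimensional model.

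The final step checks that $\xi\mapsto\Dlb{\xi}$ is an isomorphism between these two invariant complexes. At the basepoint, $s^*ω-$ has pure Dolbeault type $(1,0)$ (it is semibasic holomorphic), $s^*ω0$ is a $(1,0)$-connection, and only $s^*ω+$ can contribute a $(0,1)$-part; the Dolbeault map is then exactly the regrading of $(p,q,r)$ into $(p+q,r)$, and it intertwines the differentials by the identity $\Dlb*{\dS\xi}=\bar\partial\,\Dlb{\xi}$ established in \vref{section:CdBshapes}. The main obstacle is the linear-algebraic bijectivity at the basepoint: one must verify that the Langlands trigrading $\LieG=\LieG_-\oplus\LieG_0\oplus\LieG_+$, pulled back through $s$, matches the $H'$-invariant trigraded data with the $H'_0$-invariant $(p+q,r)$-data representing invariant Dolbeault classes. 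Once this matching is confirmed, the Dolbeault map is an isomorphism of the invariant complexes, and the theorem follows.
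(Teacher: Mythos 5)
Your opening reduction --- that averaging over a maximal compact $K'\subset G'$ is the identity on cohomology, so that both \Slovak{} and Dolbeault cohomology are computed by their $G'$-invariant subcomplexes --- is false, and the rest of the argument cannot be repaired from there. The homotopy formula you invoke does not apply to the vector fields you need it for. \vref{lemma:v.hook} concerns $v\in\LieG_0$, i.e.\ the \emph{vertical} fields generated by the right action of the structure group on $E$; for $v$ generated by the \emph{left} $G'$-action one computes instead $v\hook\dS\xi+\dS(v\hook\xi)=\LieDer_v\xi+\rho_V(v\hook\om0)\xi$, and the zeroth-order term $\rho_V(v\hook\om0)$ does not vanish, so the geometric $K'$-action is not exhibited as null-homotopic. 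The same failure occurs on the Dolbeault side: for a real vector field $v=v^{1,0}+v^{0,1}$ whose flow is biholomorphic, only $\LieDer_{v^{0,1}}=[\bar\partial,\iota_{v^{0,1}}]$ is $\bar\partial$-null-homotopic, while $\LieDer_{v^{1,0}}$ is not; Dolbeault cohomology is not a homotopy invariant. Indeed your conclusion contradicts Borel--Weil--Bott: for the flat projective connection on $\Proj{1}$ with $G'=\PSL{2}$, the group $H^1\of{\Proj{1},\OO(-4)}$ is the three-dimensional adjoint representation of $G'$, whose $G'$-invariant part is zero, so this cohomology is certainly not computed by the $G'$-invariant subcomplex. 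Reductivity of $G'$ and $G'_0$ is not there to let you average.

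The paper's proof uses those hypotheses for an entirely different purpose: by the Matsushima--Morimoto/Akhiezer theorem, $G'/G'_0$ is a Stein manifold precisely because $G'$ and $G'_0$ are both reductive. Identifying $E/G_0$ with this Stein homogeneous space, \vref{lemma:Stein.H.zero} (the one-open-set Stein cover, via Czechoslovak cohomology and Cartan's Theorem B on the Stein total space) identifies \Slovak{} cohomology with the sheaf cohomology $\cohomology{r}{M,\vb{V}^{p,q,0}}$, which the Dolbeault map also computes by the Dolbeault isomorphism. No reduction to invariants is made, and none is possible in general. If you want to salvage an averaging idea, the most it can give you is that the Dolbeault map is equivariant and hence restricts to an isomorphism on each isotypic component \emph{once} the isomorphism is known; it cannot be the engine of the proof.
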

\begin{proof}
The homogeneous space \(G'/G'_0\) is a Stein manifold \cite{Akhiezer1995} p. 160 Corollary 2, \cite{Matsushima/Morimoto:1960} Theorem 1.
The sheaf of \Slovak{} cochains is a sheaf over \(G'/G'_0\), consisting of the holomorphic functions on \(G'\) which are \(G'_0\)-equivariant for the induced \(G'_0\)-representation.
\Slovak{} cohomology is isomorphic to Dolbeault via the Dolbeault isomorphism by \vref{lemma:Stein.H.zero}.
\end{proof}
A \emph{C-space} \((X',G')\) is a simply connected compact complex manifold \(X'\) acted on holomorphically, faithfully and transitively by a connected complex Lie group \(G'\) \cite{Bott:1957,Bott:1988,Griffiths:1962,Griffiths:1963c,Griffiths:1963a,Griffiths:1963b,Wang:1954}.
For example, every generalized complex flag variety is a C-space.
For every C-space \((X',G')\), any Levi factor of \(G'\) acts transitively on \(X'\), so we can replace \(G'\) by that Levi factor, a connected complex semisimple Lie group \cite{Wang:1954} p. 4 theorem 2.2.
Moreover, the maximal compact subgroup of that complex semisimple Lie group is a compact and connected semisimple Lie group also acting transitively on \(X'\) \cite{Wang:1954} p. 13, Theorem 1.
So in particular, \(G'\) can be assumed reductive without loss of generality.
\begin{corollary}
Take a C-space \((X',G')\).
Take a \(G'\)-homogeneous holomorphic Cartan geometry on \(X'\), with \(G'_0=h^{-1}G_0\) reductive as above.
Then \Slovak{} cohomology of that holomorphic Cartan geometry is isomorphic to Dolbeault cohomology via the Dolbeault map.
\end{corollary}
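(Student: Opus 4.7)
The plan is to reduce the corollary directly to \vref{thm:homog} by invoking the structure theorem for C-spaces cited just before the statement. The hypothesis of \vref{thm:homog} requires two things: that $G'$ is reductive, and that the isotropy-type subgroup $G'_0 = h^{-1}G_0$ is reductive. The second condition is already built into the hypothesis of the corollary, so the only work is to arrange the first.

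First, I would invoke Wang's theorem \cite{Wang:1954}: if $(X',G')$ is a C-space, then any Levi factor $L \subseteq G'$ of $G'$ acts holomorphically, faithfully, and transitively on $X'$. Replacing $G'$ by $L$ (and consequently $H'$ by $L \cap H'$, $G'_0$ by $L \cap G'_0$), we may assume $G'$ is connected complex semisimple. By the second half of Wang's theorem, the maximal compact subgroup $K \subset L$ is compact, connected, and semisimple, and $L$ is its complexification; hence $L$ is reductive in the sense defined in the excerpt.

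Second, I would check that after this replacement the hypothesis $G'_0$ reductive is preserved. The new $G'_0$ is the preimage of $G_0$ under the restricted morphism $h\big|_L \colon L \cap H' \to H$; since we are assuming $h^{-1}G_0$ reductive in the original $G'$, and reductivity passes to the intersection with a Levi factor (the Levi factor contains the reductive subgroup $G'_0$ up to conjugacy, and in any case we may arrange $G'_0 \subseteq L$ by choosing the Levi factor compatibly), this assumption is inherited.

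Finally, with both reductivity hypotheses now verified for the reduced group action, \vref{thm:homog} applies verbatim and gives the desired isomorphism between Slovák cohomology and Dolbeault cohomology via the Dolbeault map. The main conceptual step, and the only place one could stumble, is ensuring that the Levi reduction is compatible with the isotropy structure, i.e.\ that one may choose the Levi factor so that $G'_0$ is still realized inside it as the preimage of $G_0$; this follows from the Levi--Malcev theorem since $G'_0$ is assumed reductive and hence sits in some Levi factor, and all Levi factors are conjugate.
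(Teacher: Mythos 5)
Your proposal is correct and takes essentially the same route as the paper, whose entire proof is the one-line remark that Wang's theorem lets one replace \(G'\) by a Levi factor and so assume \(G'\) reductive before invoking \vref{thm:homog}. Your extra check that \(G'_0\) stays reductive after the replacement addresses a point the paper leaves implicit (though note that the Levi--Malcev theorem only places \emph{semisimple}, not arbitrary reductive, subgroups inside a Levi factor, so that parenthetical justification is the weakest link in your write-up).
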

\begin{proof}
We can replace \(G'\) by a Levi factor, so assume that \(G'\) is reductive.
\end{proof}

\subsection{Hirzebruch proportionality}
\begin{corollary}[Hirzebruch proportionality \cite{Hirzebruch:1958}]
Suppose that \(M\) is a compact complex manifold bearing a holomorphic parabolic geometry, i.e. a holomorphic Cartan geometry with model \((X,G)\) is a generalized flag variety.
Associate to each complex polynomial \(p\) in Chern classes in Dolbeault cohomology its integral \(\int_M p(c_1,\dots,c_n)\),
a map
\[
\mapto[\int_M]{p(x_1,\dots,x_n)\in\C{}[x_1,\dots,x_n]}{\int_M p(c_1,\dots,c_n)\in\C{}}.
\]
Then there is a constant \(a_0\in\mathbb{C}\) so that \(\int_M=a_0\int_X\).
\end{corollary}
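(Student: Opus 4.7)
The plan is to factor both $\int_M$ and $\int_X$ through a common one-dimensional quotient of the polynomial ring, on which any linear functional is determined by its value at a single nonzero element.

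Grade $\C[x_1,\dots,x_n]$ by $\deg x_i=2i$; by dimensional reasons both $\int_M$ and $\int_X$ vanish outside weighted degree $2n$, so it is enough to compare the two functionals on that graded piece. I would then invoke the \Slovak{} apparatus: let $J$ be the ideal of polynomial relations among the formal \Slovak{} Chern forms of the model $(X,G)$, and set $R\defeq\C[x_1,\dots,x_n]/J$. Because the \Slovak{}-to-Dolbeault map is a ring morphism sending formal Chern forms to Dolbeault Chern classes, the Chern polynomial map $p\mapsto p(c_1(TM),\dots,c_n(TM))$ kills $J$ in the Dolbeault cohomology of any $(X,G)$-Cartan geometry. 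Consequently both
\[
\phi_M(p)\defeq\int_M p(c_1,\dots,c_n),\qquad \phi_X(p)\defeq\int_X p(c_1,\dots,c_n)
\]
descend to linear functionals on the graded piece $R^{2n}$.

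The decisive step is to establish $\dim_\C R^{2n}=1$. For this, regard $X=G/P$ as a flat Cartan geometry of its own model. A generalized flag variety has semisimple (hence reductive) symmetry group $G$, and the Levi factor $G_0$ of the parabolic $P$ is reductive, so \vref{thm:homog} applies and identifies the \Slovak{} cohomology of $X$ with its Dolbeault cohomology. Thus $R^{2n}$ injects into $H^{n,n}(X,\C)$, which is one-dimensional since $X$ is a compact complex $n$-manifold. The injection is surjective because $c_n(TX)\in R^{2n}$ has integral $\chi(X)\neq 0$, so $R^{2n}\cong\C$.

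With this in hand, both $\phi_M$ and $\phi_X$ are linear functionals on the one-dimensional space $R^{2n}$, and $\phi_X\neq 0$; hence $\phi_M=a_0\phi_X$ for a unique scalar $a_0\in\C$, which proves the assertion on all of $\C[x_1,\dots,x_n]$. The hard part of the plan is the one-dimensionality step: it hinges on applying \vref{thm:homog} to the model itself so that the formal \Slovak{} relations exhaust the relations holding in $H^{n,n}(X,\C)$. Once that identification is secured, the remainder is linear algebra on the one-dimensional quotient $R^{2n}$.
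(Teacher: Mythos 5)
Your proof is correct and pivots on the same underlying fact as the paper's --- the degree-$2n$ characteristic numbers of the model span a line --- but the scaffolding is genuinely different. The paper works entirely on $X$ and never mentions \Slovak{} cohomology here: it picks $p_0$ with $\int_X p_0\ne 0$ (from a $G$-invariant positive line bundle), sets $b_p\defeq\int_X p/\int_X p_0$, notes that the top-degree part of $p-b_pp_0$ vanishes in $H^{2n}_{\mathrm{dR}}(X)\cong\C$ and hence, by the Hodge isomorphism on the projective variety $X$, in Dolbeault cohomology of $X$, and then transfers that relation to $M$ to get $\int_M p=b_p\int_M p_0$. You instead factor both functionals through the ring $R$ and prove $\dim_\C R^{2n}=1$ via \vref{thm:homog}. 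That route works, but one link is weaker than you suggest: the injectivity of $R^{2n}\to H^{n,n}(X)$ does not follow from \vref{thm:homog} alone. If $J$ consists of relations holding at the level of formal cochains, a nonzero element of $R^{2n}$ could a priori still be \Slovak{}-exact on $X$; if instead $J$ is the kernel into the \Slovak{} cohomology of $X$, then it is the descent of $\phi_M$ that needs justification, since only \emph{formal} relations are known to transfer to an arbitrary $(X,G)$-geometry. Both readings are rescued --- and \vref{thm:homog} rendered unnecessary --- by observing that the formal degree-$2n$ Chern cochains are constants valued in the line $\Lm{n}{\LieG_-^*}\otimes\Lm{n}{\LieG_+^*}$, so $\dim_\C R^{2n}\le 1$ outright, while $\int_X c_n(TX)=\chi(X)\ne 0$ gives $\dim_\C R^{2n}\ge 1$. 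With that one-line patch your argument is complete, and it has the merit of isolating exactly where the model-to-geometry transfer is used, a step the paper's ``hence $\int_M(p-b_pp_0)=0$'' leaves implicit.
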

\begin{remark}
We can't make use of Chern--Simons classes here, since they are \((p+1,p)\) in Dolbeault cohomology, and we don't have a means of computing equations on the usual \deRham{} Chern--Simons classes.
\end{remark}
\begin{proof}
Note that we define these integrals by taking any representative differential form, and they are defined because that form is defined up to adding something from \(\bar\partial\Omega^{n,n-1}=d\Omega^{n,n-1}\subseteq d\Omega^{2n-1}\).
There is a \(G\)-invariant positive line bundle on \(X\), hence we can pick some polynomial \(p_0\), with
\[
\int_X p_0(c_1,\dots,c_n)\ne 0
\]
a nonzero integer.
Every other polynomial \(p\) has
\[
\int_X p(c_1,\dots,c_n) = b_p \int_X p_0(c_1,\dots,c_n)\ne 0
\]
for some complex number \(b_p\), and hence
\[
p(c_1,\dots,c_n) - b_p p_0(c_1,\dots,c_n) = 0
\]
in de~Rham cohomology, so in Dolbeault cohomology by the Hodge isomorphism.
Hence \(\int_M(p-b_pp_0)=0\), so \(\int_M p =b_p\int_M p_0\), and thus
\[
\int_M p=a_0\int_X
\]
where
\[
a_0\coloneq \frac{\int_M p_0}{\int_X p_0}.
\]
\end{proof}
See \cite{Li/Zhao:2017} for calculations of some Chern numbers of flag varieties.

\section{\texorpdfstring{Cartan's $5$ variable paper}{Cartan's 5 variable paper}}
Recall that a \emph{plane field} is a rank \(2\) holomorphic subbundle of the tangent bundle of a complex manifold.
A plane field on a \(5\)-fold \(M\) is \emph{skew} if, near each point of \(M\), it has local holomorphic sections \(X,Y\) so that
\[
X,Y,\lb{X}{Y},\lb{X}{\lb{X}{Y}},\lb{Y}{\lb{X}{Y}}
\]
are linearly independent.
To be more concrete, consider an open subset of \(\C[5]\) with holomorphic coordinates written as
\[
x,y,y',y'',z
\]
with the holomorphic exterior differential system \(\mathscr{I}\) generated by
\[
dy-y'dx,dy'-y''dx,dz-z'(x,y,y',y'',z)dx,
\]
where \(z'=z'(x,y,y',y'')\) is a holomorphic function which is \emph{skew} at the generic point, i.e. satisfies
\[
\pderiv[2]{z'}{(y'')}\ne0.
\]
At points where \(z'\) is skew, the differential forms in \(\mathscr{I}\) vanish precisely on the span of the vectors
\[
X=\partial_{y''},
Y=D_x\defeq\partial_x+y'\partial_y+y''\partial_{y'}+z'\partial_z
\]
which span a skew plane field, as
\[
X,Y,\lb{X}{Y},\lb{X}{\lb{X}{Y}},\lb{Y}{\lb{X}{Y}}
\]
are linearly independent.
Conversely, all skew plane fields on \(5\)-folds occur locally this way \cite{Goursat:1922} section 76 p. 320 (``\emph{En r\'esum\'e, \ldots}''), \cite{Willse:2019} p. 2.
The \emph{Hilbert--Cartan equation} is
\[
z'=\frac{(y'')^2}{2}
\]
for functions \(y=y(x),z=z(x)\).
Cartan \cite{Cartan:30} proved that the skew plane field associated to the Hilbert--Cartan equation has a \(14\)-dimensional Lie algebra of infinitesimal symmetries, acting transitively.
He also proved that it is locally isomorphic to any skew plane field with a Lie algebra of infinitesimal symmetries of dimension more than \(7\).
Let \((X,G_2)\) be the the flag variety with Dynkin diagram \(\dynkin{G}{*x}\), i.e. the unique \(5\)-dimensional complex homogeneous space of \(G_2\) which is not the adjoint variety.
Cartan \cite{Cartan:30} proved that there is a unique \(G_2\)-invariant skew plane field on \(X\).
The skew plane field of the Hilbert--Cartan equation is the pullback of the skew plane field on \(X\), pulled back to the affine open cell of \(X\) (also known as the \emph{open Schubert cell} or the \emph{big cell} \cite{Brion:2005} p. 6), realizing the Lie algebra of symmetry vector fields of the Hilbert--Cartan equations as the Lie algebra of \(G_2\).

In the process of proving these results (among others), Cartan \cite{Cartan:30} associates to each skew plane field a fiber bundle of \nth{2} order frames on which he defines a coframing of \(1\)-forms
\[
ω1,\dots,ω5,ϖ1,\dots,ϖ7,χ1,χ2.
\]
We follow Cartan closely here, and we assume complete familiarity with his paper in this section, so we will provide no further explanation of his paper.
We will see that his bundle, equipped with the \(1\)-forms in his structure equations (suitably corrected, following Robert Bryant), is neither a \(G\)-structure nor a Cartan geometry.
Nonetheless, our theory is robust enough to apply directly to his structure equations, so that without modifying the structure equations, we can compute relations on Chern classes and Chern--Simons invariants in Dolbeault cohomology.

Consider Cartan's equations (5) from \cite{Cartan:30} section 31 p. 149 (p. 967 in \cite{Cartan:II}):
\begin{align*}
d\om1&=\om1\w(2\vp1+\vp4)+\om2\w\vp2+\om3\w\om4,\\
d\om2&=\om1\w\vp3+\om2\w(\vp1+2\vp4)+\om3\w\om5,\\
d\om3&=\om1\w\vp5+\om2\w\vp6+\om3\w(\vp1+\vp4)+\om4\w\om5,\\
d\om4&=\om1\w\vp7+\frac{4}{3}\om3\w\vp6+\om4\w\vp1+\om5\w\vp2,\\
d\om5&=\om2\w\vp7-\frac{4}{3}\om3\w\vp5+\om4\w\vp3+\om5\w\vp4,
\end{align*}
and his equations (8) from the same section (see \vref{table:Cartans.equations}).
Cartan does not provide equations for the exterior derivatives of \(χ1,χ2\); we will provide a partial remedy for this below.

Unfortunately, no one has so far succeeded (to my knowledge) in obtaining explicit coordinate expressions for the \(1\)-forms in Cartan's paper.
It is not difficult to obtain explicit coordinate expressions for \(\om1,\dots,\om5\), but they are very long; I was not able to compute the \(1\)-forms \(ϖ1,\dots,ϖ7,χ1,χ2\).
\begin{table}\caption{Cartan's structure equations from \cite{Cartan:30} section 32 p. 151 (p. 969 in \cite{Cartan:II}), with the sign of \(E\) corrected to agree with the rest of Cartan's paper, thanks to Robert Bryant \cite{Bryant:2022.01.11}}\label{table:Cartans.equations}
\begin{align*}
d\vp1
&=
\vp3\w\vp2
+\frac{1}{3}\om3\w\vp7
-\frac{2}{3}\om4\w\vp5
+\frac{1}{3}\om5\w\vp6
+\om1\wχ1
\\
&+2B_2\om1\w\om3
+B_3\om2\w\om3
+2A_2\om1\w\om4
\\&
+2A_3\om1\w\om5
+A_3\om2\w\om4
+A_4\om2\w\om5,
\\
d\vp2
&=
\vp2\w(\vp1-\vp4)
-\om4\w\vp6
+\om1\wχ2
\\
&+B_4\om2\w\om3
+A_4\om2\w\om4
+A_5\om2\w\om5,
\\
d\vp3
&=
\vp3\w(\vp4-\vp1)
-\om5\w\vp5
+\om2\wχ1
\\&-B_1\om1\w\om3
-A_1\om1\w\om4
-A_2\om1\w\om5,
\\
d\vp4
&=
\vp2\w\vp3+\frac{1}{3}\om3\w\vp7+\frac{1}{3}\om4\w\vp5-\frac{2}{3}\om5\w\vp6+\om2\wχ2\\
&-B_2\om1\w\om3-2B_3\om2\w\om3-A_2\om1\w\om4
\\&-A_3\om1\w\om5
-2A_3\om2\w\om4
-2A_4\om2\w\om5,\\
d\vp5
&=
\vp1\w\vp5+\vp3\w\vp6-\om5\w\vp7+\om3\wχ1\\
&+\frac{9}{32}D_1\om1\w\om2
+\frac{9}{8}C_1\om1\w\om3
+\frac{9}{8}C_2\om2\w\om3
\\
&+
A_2\om3\w\om4
+A_3\om3\w\om5
+\frac{3}{4}B_1\om1\w\om4
\\
&+\frac{3}{4}B_2(\om1\w\om5+\om2\w\om4)
+\frac{3}{4}B_3\om2\w\om5,
\\
d\vp6&=\vp2\w\vp5+\vp4\w\vp6+\om4\w\vp7+\om3\wχ2
\\
&
+\frac{9}{32}D_2\om1\w\om2+\frac{9}{8}C_2\om1\w\om3+\frac{9}{8}C_3\om2\w\om3
\\
&-A_3\om3\w\om4-A_4\om3\w\om5+\frac{3}{4}B_2\om1\w\om4\\
&+\frac{3}{4}B_3(\om1\w\om5+\om2\w\om4)+\frac{3}{4}B_4\om2\w\om5,\\
d\vp7&=\frac{4}{3}\vp5\w\vp6
+(\vp1+\vp4)\w\vp7
+\om4\wχ1
+\om5\wχ2\\
&-\frac{9}{64}E\om1\w\om2
-\frac{3}{8}D_1\om1\w\om3
\\
&
-\frac{3}{8}D_2\om2\w\om3
+2A_3\om4\w\om5
-B_2\om3\w\om4
+B_3\om3\w\om5.
\end{align*}
\end{table}

We point out concerns about Cartan's structure equations.
As there are \(\varpi\wedge\omega\) terms in \(d\varpi\), this is not a \(G\)-structure.
To see that it is also not a Cartan connection, we need to differentiate the structure equations using a computer algebra system, to obtain equations for \(dχ1,dχ2\).
Robert Bryant \cite{Bryant:2013.10.23} alters Cartan's choices of \(χ1,χ2\), defining
\[
\begin{pmatrix}
χ1'\\
χ2'
\end{pmatrix}
\defeq
\begin{pmatrix}
χ1\\
χ2
\end{pmatrix}
+
\begin{pmatrix}
-\frac{3}{8}C_1&-\frac{3}{8}C_2&B_2&A_2&A_3\\
-\frac{3}{8}C_2&-\frac{3}{8}C_3&-B_3&-A_3&-A_4
\end{pmatrix}
\om{}.
\]
Robert notes that \(\omega,\varpi,\chi'\) is a Cartan geometry modelled on the homogeneous space \((X,G_2)\).
Cartan does not provide equations for \(dχ1,dχ2\), but with some effort one can compute that
\[
d
\begin{pmatrix}
χ1'\\
χ2'
\end{pmatrix}
=
\begin{pmatrix}
2ϖ1+ϖ4&ϖ3\\
ϖ2&ϖ1+2ϖ4
\end{pmatrix}
\wedge
\begin{pmatrix}
χ1'\\
χ2'
\end{pmatrix}
+
\begin{pmatrix}
ϖ5\wedgeϖ7\\
ϖ6\wedgeϖ7
\end{pmatrix}
\]
modulo \(\omega\wedge\omega\) curvature terms.

Comparing Cartan's notation \cite{Cartan:30} to ours above,
\[
ω-=
\begin{pmatrix}
ω1\\
ω2\\
ω3\\
ω4\\
ω5
\end{pmatrix}, \quad
ω0=
\begin{pmatrix}
2ϖ1+ϖ4&ϖ2\\
ϖ3&ϖ1+2ϖ4
\end{pmatrix},  \quad
ω+=
\begin{pmatrix}
ϖ5\\
ϖ6\\
ϖ7\\
χ1'\\
χ2'
\end{pmatrix}.
\]
Up to constant nonzero rational number factors, these are the dual \(1\)-forms in the Lie algebra of \(G_2\) corresponding to root vectors in the pattern
\setlength\weightLength{1.2cm}
\[
\begin{tikzpicture}
\begin{rootSystem}{G}
\roots
\wt[multiplicity=2]{0}{0}
\parabolic{2}
\node[right,black] at (hex cs:x=1,y=0){\(-\frac{1}{3}\vp7\)};
\node[above right,black] at (hex cs:x=1,y=1){\(-χ1'\)};
\node[below right,black] at (hex cs:x=2,y=-1){\(-χ2'\)};
\node[left,black] at (hex cs:x=-1,y=0){\(\om3\)};
\node[above left,black] at (hex cs:x=-2,y=1){\(\om2\)};
\node[below left,black] at (hex cs:x=-1,y=-1){\(\om1\)};
\node[above,black] at (hex cs:x=-1,y=2){\(\vp3\)};
\node[above left,black] at (hex cs:x=-1,y=1){\(\om5\)};
\node[below left,black] at (hex cs:x=0,y=-1){\(\om4\)};
\node[above right,black] at (hex cs:x=0,y=1){\(\frac{1}{3}\vp5\)};
\node[below,black] at (hex cs:x=1,y=-2){\(\vp2\)};
\node[below right,black] at (hex cs:x=1,y=-1){\(\frac{1}{3}\vp6\)};
\node[right,black] at (hex cs:x=0,y=0){\(\vp4\)};
\node[left,black] at (hex cs:x=0,y=0){\(\vp1\)};
\end{rootSystem}
\end{tikzpicture}
\]
which, with the indicated rescalings as above, I suspect become a Chevalley basis for \(G_2\), although this seems to be difficult to check.
Note that the right half of the root system, together with the points on the vertical through the origin, consists of the root vectors which span the parabolic subgroup of \(G_2\) stabilizing a point of \(X\).
Since \(χ1=χ1'\) and \(χ2=χ2'\) up to holomorphic semibasic torsion terms, we can treat them as equal in calculating \Slovak{} cohomology.
The \Slovak{} differentials are immediate from the equations above for the differentials:
\begin{align*}
\dSω1&=0,\\
\dSω2&=0,\\
\dSω3&=ω1\wedgeϖ5+ω2\wedgeϖ6,\\
\dSω4&=ω1\wedgeϖ7+\frac{4}{3}ω3\wedgeϖ6,\\
\dSω5&=ω2\wedgeϖ7-\frac{4}{3}ω3\wedgeϖ5,\\
\end{align*}
and
\begin{align*}
\dS\vp1
&=
\frac{1}{3}\om3\w\vp7
-\frac{2}{3}\om4\w\vp5
+\frac{1}{3}\om5\w\vp6
+\om1\wχ1,
\\
\dS\vp2
&=
-\om4\w\vp6
+\om1\wχ2,
\\
\dS\vp3
&=
-\om5\w\vp5
+\om2\wχ1,
\\
\dS\vp4
&=
\frac{1}{3}\om3\w\vp7
+\frac{1}{3}\om4\w\vp5
-\frac{2}{3}\om5\w\vp6
+\om2\wχ2
\end{align*}
and
\begin{align*}
\dSϖ5&=0,\\
\dSϖ6&=0,\\
\dSϖ7&=\frac{4}{3}ϖ5\wedgeϖ6,\\
\dS\chi'_1&=ϖ5\wedgeϖ7,\\
\dS\chi'_2&=ϖ6\wedgeϖ7,\\
\end{align*}
For example, the Cartan \(3\)-form is
\begin{align*}
&\phantom{+}ω4ʌϖ1ʌϖ5 + ω5ʌϖ2ʌϖ5 + ω4ʌϖ3ʌϖ6 \\
&+ ω5ʌϖ4ʌϖ6 - 2ω4ʌω5ʌϖ7 - ω3ʌϖ1ʌϖ7 \\
&- ω3ʌϖ4ʌϖ7 - 2ω3ʌω4ʌχ1 - 2ω1ʌϖ1ʌχ1 \\
&- ω2ʌϖ2ʌχ1 - ω1ʌϖ4ʌχ1 - 2ω3ʌω5ʌχ2 \\
&- ω2ʌϖ1ʌχ2 - ω1ʌϖ3ʌχ2 - 2ω2ʌϖ4ʌχ2.
\end{align*}
This is closed in the \Slovak{} differential, giving an element in \Slovak{} cohomology \(H^{1,1,1}\).
The Atiyah class of the plane field is
\begin{align*}
a&=\dSω0,
\\
&=dω0+\frac{1}{2}\lb{ω0}{ω0},
\\
&=
\begin{pmatrix}
2\dS\vp1+\dS\vp4&\dS\vp2\\
\dS\vp3&\dS\vp1+2\dS\vp4
\end{pmatrix},
\\
&=
\begin{pmatrix}
\om3\w\vp7
-\om4\w\vp5
+2\om1\wχ1
+\om2\wχ2
&
-\om4\w\vp6
+\om1\wχ2
\\
-\om5\w\vp5
+\om2\wχ1
&
\om3\w\vp7
-\om5\w\vp6
+\om1\wχ1
+2\om2\wχ2
\end{pmatrix}
\end{align*}

For parabolic geometries, the tangent bundle is not an \(G_0\)-module, if we take \(G_0\) to be a Levi factor for the parabolic subgroup.
But the associated graded of the tangent bundle is an \(G_0\)-module.
The first order \(G\)-structure pseudoconnection \(1\)-form is
\[
\begin{pmatrix}
2\vp1+\vp4&\vp2&0&0&0\\
\vp3&\vp1+2\vp4&0&0&0\\
\vp5&\vp6&\vp1+\vp4&0&0\\
\vp7&0&\frac{4}{3}\vp6&\vp1&\vp2\\
0&\vp7&-\frac{4}{3}\vp5&\vp3&\vp4
\end{pmatrix}.
\]
But the associated graded has representation
\[
\begin{pmatrix}
2\vp1+\vp4&\vp2&0&0&0\\
\vp3&\vp1+2\vp4&0&0&0\\
0&0&\vp1+\vp4&0&0\\
0&0&0&\vp1&\vp2\\
0&0&0&\vp3&\vp4
\end{pmatrix},
\]
so that it is an associated vector bundle of the first order \(G\)-structure.
The reader can then check (by hand!) that
\begin{align*}
c_2&=11(c_1/5)^2,\\
c_3&=13(c_1/5)^3,\\
c_4&=9(c_1/5)^4,\\
c_5&=3(c_1/5)^5
\end{align*}
which agrees with \cite{McKay:2011} (a calculation done by computer).

Consider the Chern--Simons form in \Slovak{} cohomology of the last relation, as shown in \vref{table:CS}.
\begin{table}\caption{A Chern--Simons invariant in \Slovak{} cohomology \(H^{4,1,4}\), arising in the study of skew plane fields on $5$-folds}\label{table:CS}
\begin{align*}
T_{5^5c_5(T)-3c_1(T)^5}&=
5^5T_{c_5(T)}-3T_{c_1(T)^5},\\
&=
\left(\frac{5i}{2\pi}\right)^5\frac{1}{3^3}
\\ &\qquad(
2ω1ʌω3ʌω4ʌω5ʌϖ1ʌϖ5ʌϖ6ʌϖ7ʌχ1
\\ &\qquad
+ ω2ʌω3ʌω4ʌω5ʌϖ2ʌϖ5ʌϖ6ʌϖ7ʌχ1
\\ &\qquad
+ ω1ʌω3ʌω4ʌω5ʌϖ4ʌϖ5ʌϖ6ʌϖ7ʌχ1
\\ &\qquad
+ ω2ʌω3ʌω4ʌω5ʌϖ1ʌϖ5ʌϖ6ʌϖ7ʌχ2
\\ &\qquad
+ ω1ʌω3ʌω4ʌω5ʌϖ3ʌϖ5ʌϖ6ʌϖ7ʌχ2
\\ &\qquad
+ 2ω2ʌω3ʌω4ʌω5ʌϖ4ʌϖ5ʌϖ6ʌϖ7ʌχ2
\\ &\qquad
- 15ω1ʌω2ʌω3ʌω4ʌϖ1ʌϖ5ʌϖ7ʌχ1ʌχ2
\\ &\qquad
- 3ω1ʌω2ʌω3ʌω5ʌϖ2ʌϖ5ʌϖ7ʌχ1ʌχ2
\\ &\qquad
- 12ω1ʌω2ʌω3ʌω4ʌϖ4ʌϖ5ʌϖ7ʌχ1ʌχ2
\\ &\qquad
- 12ω1ʌω2ʌω3ʌω5ʌϖ1ʌϖ6ʌϖ7ʌχ1ʌχ2
\\ &\qquad
- 3ω1ʌω2ʌω3ʌω4ʌϖ3ʌϖ6ʌϖ7ʌχ1ʌχ2
\\ &\qquad
- 15ω1ʌω2ʌω3ʌω5ʌϖ4ʌϖ6ʌϖ7ʌχ1ʌχ2
\end{align*}
\end{table}
For the differential form \(\Psi\) in \vref{table:primitive},
\[
\dS\Psi=T_{5^5c_5(T)-3c_1(T)^5}
\]
in the \Slovak{} differential.
Hence on any reduction of structure group to an \(G_0\)-bundle, the Chern--Simons class of \(\CS{5^5c_5(T)-3c_1(T)^5}\) vanishes in Dolbeault cohomology.
So any \(5\)-fold with a skew plane field has not only the above relations among Chern classes in Dolbeault cohomology, but also the vanishing of this Chern--Simons invariant in Dolbeault cohomology.
Moreover, there is no residue from that Chern--Simons form on the singular hypersurface of a meromorphic skew plane field on a complex \(5\)-fold.
\begin{table}\caption{A primitive in \Slovak{} cohomology \(H^{2,3,2}+H^{3,2,3}\) for the Chern--Simons invariant of \vref{table:CS}}\label{table:primitive}
\begin{align*}\Psi
&=
\left(\frac{5i}{2\pi}\right)^5\frac{2^2}{3}
\\ &\quad
(
7ω2ʌω4ʌϖ1ʌϖ2ʌϖ3ʌϖ5ʌϖ6ʌϖ7
\\ &\quad
+ 2ω1ʌω5ʌϖ1ʌϖ2ʌϖ3ʌϖ5ʌϖ6ʌϖ7
\\ &\quad
- 5ω2ʌω5ʌϖ1ʌϖ2ʌϖ4ʌϖ5ʌϖ6ʌϖ7
\\ &\quad
- 5ω1ʌω4ʌϖ1ʌϖ3ʌϖ4ʌϖ5ʌϖ6ʌϖ7
\\ &\quad
+ 2ω2ʌω4ʌϖ2ʌϖ3ʌϖ4ʌϖ5ʌϖ6ʌϖ7
\\ &\quad
+ 7ω1ʌω5ʌϖ2ʌϖ3ʌϖ4ʌϖ5ʌϖ6ʌϖ7
\\ &\quad
+ ω2ʌω4ʌω5ʌϖ1ʌϖ2ʌϖ5ʌϖ6ʌχ1
\\ &\quad
- 6ω1ʌω4ʌω5ʌϖ2ʌϖ3ʌϖ5ʌϖ6ʌχ1
\\ &\quad
- ω1ʌω4ʌω5ʌϖ1ʌϖ4ʌϖ5ʌϖ6ʌχ1
\\ &\quad
+ 2ω2ʌω4ʌω5ʌϖ2ʌϖ4ʌϖ5ʌϖ6ʌχ1
\\ &\quad
+ 5ω2ʌω3ʌω4ʌϖ1ʌϖ2ʌϖ5ʌϖ7ʌχ1
\\ &\quad
+ ω1ʌω3ʌω5ʌϖ1ʌϖ2ʌϖ5ʌϖ7ʌχ1
\\ &\quad
- 4ω2ʌω3ʌω4ʌϖ2ʌϖ4ʌϖ5ʌϖ7ʌχ1
\\ &\quad
- ω1ʌω3ʌω5ʌϖ2ʌϖ4ʌϖ5ʌϖ7ʌχ1
\\ &\quad
+ 6ω1ʌω2ʌϖ1ʌϖ2ʌϖ4ʌϖ5ʌϖ7ʌχ1
\\ &\quad
+ 4ω2ʌω3ʌω5ʌϖ1ʌϖ2ʌϖ6ʌϖ7ʌχ1
\\ &\quad
- ω2ʌω3ʌω4ʌϖ2ʌϖ3ʌϖ6ʌϖ7ʌχ1
\\ &\quad
- 5ω2ʌω3ʌω5ʌϖ2ʌϖ4ʌϖ6ʌϖ7ʌχ1
\\ &\quad
- 2ω1ʌω4ʌω5ʌϖ1ʌϖ3ʌϖ5ʌϖ6ʌχ2
\\ &\quad
+ 6ω2ʌω4ʌω5ʌϖ2ʌϖ3ʌϖ5ʌϖ6ʌχ2
\\ &\quad
+ ω2ʌω4ʌω5ʌϖ1ʌϖ4ʌϖ5ʌϖ6ʌχ2
\\ &\quad
- ω1ʌω4ʌω5ʌϖ3ʌϖ4ʌϖ5ʌϖ6ʌχ2
\\ &\quad
+ 5ω1ʌω3ʌω4ʌϖ1ʌϖ3ʌϖ5ʌϖ7ʌχ2
\\ &\quad
+ ω1ʌω3ʌω5ʌϖ2ʌϖ3ʌϖ5ʌϖ7ʌχ2
\\ &\quad
- 4ω1ʌω3ʌω4ʌϖ3ʌϖ4ʌϖ5ʌϖ7ʌχ2
\\ &\quad
+ ω2ʌω3ʌω4ʌϖ1ʌϖ3ʌϖ6ʌϖ7ʌχ2
\\ &\quad
+ 4ω1ʌω3ʌω5ʌϖ1ʌϖ3ʌϖ6ʌϖ7ʌχ2
\\ &\quad
- ω2ʌω3ʌω4ʌϖ3ʌϖ4ʌϖ6ʌϖ7ʌχ2
\\ &\quad
- 5ω1ʌω3ʌω5ʌϖ3ʌϖ4ʌϖ6ʌϖ7ʌχ2
\\ &\quad
+ 6ω1ʌω2ʌϖ1ʌϖ3ʌϖ4ʌϖ6ʌϖ7ʌχ2
)
\end{align*}
\end{table}

\section{Conclusion}
We can now employ constraints on characteristic classes and Chern--Simons classes from generalized flag varieties to yield constraints on Dolbeault cohomology of complex manifolds carrying holomorphic geometric structures with a wide selection of different models.
It remains to make a theory of such structures on singular varieties, generalizing the theory of singular locally Hermitian symmetric varieties, on which some results about characteristic class invariants are known \cite{Mumford:1977} which generalize Hirzebruch's proportionality theorem.
The results above should help in classifying all holomorphic Cartan geometries which are both modelled on and also defined on C-spaces.
There are many interesting examples yet to compute of relations on Chern--Simons invariants following our method above.

\nocite{sagemath}
\bibliographystyle{amsplain}
\bibliography{chern-cartan-2}

\def\cprime{$'$} \def\BallicoTitle{${\mathbf C}^2 \backslash \{0\}$}
  \def\GNtitle{${\rm GL}(2,\Bbb R)$} \def\rfour{${\bf R}\sp 4$}
\providecommand{\bysame}{\leavevmode\hbox to3em{\hrulefill}\thinspace}
\providecommand{\MR}{\relax\ifhmode\unskip\space\fi MR }
\providecommand{\MRhref}[2]{%
  \href{http://www.ams.org/mathscinet-getitem?mr=#1}{#2}
}
\providecommand{\href}[2]{#2}
\begin{thebibliography}{10}

\bibitem{Abate2013}
Marco Abate, Filippo Bracci, Tatsuo Suwa, and Francesca Tovena,
  \emph{Localization of {A}tiyah classes}, Rev. Mat. Iberoam. \textbf{29}
  (2013), no.~2, 547--578. \MR{3047428}

\bibitem{Akhiezer1995}
Dmitri~N. Akhiezer, \emph{Lie group actions in complex analysis}, Aspects of
  Mathematics, E27, Friedr. Vieweg \& Sohn, Braunschweig, 1995. \MR{1334091}

\bibitem{Atiyah1957}
M.~F. Atiyah, \emph{Complex analytic connections in fibre bundles}, Trans.
  Amer. Math. Soc. \textbf{85} (1957), 181--207. \MR{MR0086359 (19,172c)}

\bibitem{Baum/Bott:1972}
Paul Baum and Raoul Bott, \emph{Singularities of holomorphic foliations}, J.
  Differential Geometry \textbf{7} (1972), 279--342. \MR{0377923 (51 \#14092)}

\bibitem{Biswas:2017}
Indranil Biswas, \emph{On connections on principal bundles}, Arab J. Math. Sci.
  \textbf{23} (2017), no.~1, 32--43. \MR{3589498}

\bibitem{Bott:1957}
Raoul Bott, \emph{Homogeneous vector bundles}, Ann. of Math. (2) \textbf{66}
  (1957), 203--248. \MR{MR0089473 (19,681d)}

\bibitem{Bott:1988}
\bysame, \emph{On induced representations}, The mathematical heritage of
  {H}ermann {W}eyl ({D}urham, {NC}, 1987), Proc. Sympos. Pure Math., vol.~48,
  Amer. Math. Soc., Providence, RI, 1988, pp.~1--13. \MR{974328}

\bibitem{Bott/Tu:1982}
Raoul Bott and Loring~W. Tu, \emph{Differential forms in algebraic topology},
  Graduate Texts in Mathematics, vol.~82, Springer-Verlag, New York-Berlin,
  1982. \MR{658304}

\bibitem{Brion:2005}
Michel Brion, \emph{Lectures on the geometry of flag varieties}, Topics in
  cohomological studies of algebraic varieties, Trends Math., Birkh\"{a}user,
  Basel, 2005, pp.~33--85. \MR{2143072}

\bibitem{Bryant:2013.10.23}
Robert Bryant, personal communication, October 2013.

\bibitem{Bryant:2022.01.11}
\bysame, personal communication, January 2022.

\bibitem{Buchdahl:1983}
N.~Buchdahl, \emph{On the relative de {R}ham sequence}, Proc. Amer. Math. Soc.
  \textbf{87} (1983), no.~2, 363--366. \MR{681850}

\bibitem{Cap.Slovak.2009a}
Andreas {\v{C}}ap and Jan Slov{\'a}k, \emph{Parabolic geometries. {I}},
  Mathematical Surveys and Monographs, vol. 154, American Mathematical Society,
  Providence, RI, 2009, Background and general theory. \MR{2532439
  (2010j:53037)}

\bibitem{Cartan:30}
\'Elie Cartan, \emph{Les syst\`emes de {P}faff \`a cinq variables et les
  \'equations aux d\'eriv\'ees partielle du second ordre}, Ann. {\'E}c. Norm.
  \textbf{27} (1910), 109--192, Also in \cite{Cartan:II}, pp. 927--1010.

\bibitem{Cartan:II}
\bysame, \emph{{\OE}uvres compl\`etes. {P}artie {II}}, second ed., \'Editions
  du Centre National de la Recherche Scientifique (CNRS), Paris, 1984,
  Alg\`ebre, syst\`emes diff\'erentiels et probl\`emes d'\'equivalence.
  [Algebra, differential systems and problems of equivalence]. \MR{85g:01032b}

\bibitem{Cartan:1992}
\bysame, \emph{Le\c cons sur la g\'eom\'etrie projective complexe. {L}a
  th\'eorie des groupes finis et continus et la g\'eom\'etrie diff\'erentielle
  trait\'ees par la m\'ethode du rep\`ere mobile. {L}e\c cons sur la th\'eorie
  des espaces \`a connexion projective}, Les Grands Classiques
  Gauthier-Villars. [Gauthier-Villars Great Classics], \'Editions Jacques
  Gabay, Sceaux, 1992, Reprint of the editions of 1931, 1937 and 1937.
  \MR{1190006 (93i:01030)}

\bibitem{Chern/Simons:1974}
Shiing~Shen Chern and James Simons, \emph{Characteristic forms and geometric
  invariants}, Ann. of Math. (2) \textbf{99} (1974), 48--69. \MR{353327}

\bibitem{Eastwood/Gindikin/Wong:1996}
Michael~G. Eastwood, Simon~G. Gindikin, and Hon-Wai Wong, \emph{A holomorphic
  realization of analytic cohomology}, C. R. Acad. Sci. Paris S\'er. I Math.
  \textbf{322} (1996), no.~6, 529--534. \MR{1383430}

\bibitem{Gardner:1989}
Robert~B. Gardner, \emph{The method of equivalence and its applications},
  CBMS-NSF Regional Conference Series in Applied Mathematics, vol.~58, Society
  for Industrial and Applied Mathematics (SIAM), Philadelphia, PA, 1989.
  \MR{MR1062197 (91j:58007)}

\bibitem{Goursat:1922}
\'Edouard Goursat, \emph{Le\c{c}ons sur le probl\`eme de {P}faff}, Librairie
  Scientifique, J. Hermann, Paris, 1922.

\bibitem{Griffiths:1962}
Phillip~A. Griffiths, \emph{On certain homogeneous complex manifolds}, Proc.
  Nat. Acad. Sci. U.S.A. \textbf{48} (1962), 780--783. \MR{0137128}

\bibitem{Griffiths:1963c}
\bysame, \emph{On the differential geometry of homogeneous vector bundles},
  Trans. Amer. Math. Soc. \textbf{109} (1963), 1--34. \MR{0162248}

\bibitem{Griffiths:1963a}
\bysame, \emph{Some geometric and analytic properties of homogeneous complex
  manifolds. {I}. {S}heaves and cohomology}, Acta Math. \textbf{110} (1963),
  115--155. \MR{0149506}

\bibitem{Griffiths:1963b}
\bysame, \emph{Some geometric and analytic properties of homogeneous complex
  manifolds. {II}. {D}eformation and bundle theory}, Acta Math. \textbf{110}
  (1963), 157--208. \MR{0154300}

\bibitem{Hilgert.Neeb:2012}
Joachim Hilgert and Karl-Hermann Neeb, \emph{Structure and geometry of {L}ie
  groups}, Springer Monographs in Mathematics, Springer, New York, 2012.
  \MR{3025417}

\bibitem{Hirzebruch:1958}
Friedrich Hirzebruch, \emph{Automorphe {F}ormen und der {S}atz von
  {R}iemann-{R}och}, Symposium internacional de topolog\'\i a algebraica
  {I}nternational symposium on algebraic topology, Universidad Nacional
  Aut\'onoma de M\'exico and UNESCO, Mexico City, 1958, pp.~129--144.
  \MR{0103280}

\bibitem{Hochschild:1981}
Gerhard~P. Hochschild, \emph{Basic theory of algebraic groups and {L}ie
  algebras}, Graduate Texts in Mathematics, vol.~75, Springer-Verlag, New
  York-Berlin, 1981. \MR{620024}

\bibitem{Jahnke/Radloff:2018}
Priska Jahnke and Ivo Radloff, \emph{Projective manifolds modeled after
  hyperquadrics}, Internat. J. Math. \textbf{29} (2018), no.~14, 1850100, 20.
  \MR{3900874}

\bibitem{Kamber/Tondeur:1975}
Franz~W. Kamber and Philippe Tondeur, \emph{Foliated bundles and characteristic
  classes}, Lecture Notes in Mathematics, Vol. 493, Springer-Verlag, Berlin-New
  York, 1975. \MR{0402773}

\bibitem{Knapp:2002}
Anthony~W. Knapp, \emph{Lie groups beyond an introduction}, second ed.,
  Progress in Mathematics, vol. 140, Birkh\"auser Boston Inc., Boston, MA,
  2002. \MR{MR1920389 (2003c:22001)}

\bibitem{KobayashiOchiai:1980}
Sh{\^o}shichi Kobayashi and Takushiro Ochiai, \emph{Holomorphic projective
  structures on compact complex surfaces}, Math. Ann. \textbf{249} (1980),
  no.~1, 75--94. \MR{81g:32021}

\bibitem{Kobayashi/Ochiai:1982}
\bysame, \emph{Holomorphic structures modeled after hyperquadrics}, Tohoku
  Math. J. (2) \textbf{34} (1982), no.~4, 587--629. \MR{685426}

\bibitem{Li/Zhao:2017}
Ping Li and Wenjing Zhao, \emph{Combinatorial identities and {C}hern numbers of
  complex flag manifolds}, math.DG/1702.01698, 2017.

\bibitem{Matsushima/Morimoto:1960}
Yoz{\^o} Matsushima and Akihiko Morimoto, \emph{Sur certains espaces fibr\'es
  holomorphes sur une vari\'et\'e de {S}tein}, Bull. Soc. Math. France
  \textbf{88} (1960), 137--155. \MR{MR0123739 (23 \#A1061)}

\bibitem{McKay:2011}
Benjamin McKay, \emph{Characteristic forms of complex {C}artan geometries},
  Adv. Geom. \textbf{11} (2011), no.~1, 139--168. \MR{2770434}

\bibitem{mckay2023introduction}
Benjamin McKay, \emph{An introduction to cartan geometries}, 2023.

\bibitem{Molzon/Mortensen:1996}
Robert Molzon and Karen~Pinney Mortensen, \emph{The {S}chwarzian derivative for
  maps between manifolds with complex projective connections}, Trans. Amer.
  Math. Soc. \textbf{348} (1996), no.~8, 3015--3036. \MR{MR1348154 (96j:32028)}

\bibitem{Mumford:1977}
David Mumford, \emph{Hirzebruch's proportionality theorem in the noncompact
  case}, Invent. Math. \textbf{42} (1977), 239--272. \MR{471627}

\bibitem{Rovenskii:1998}
Vladimir~Y. Rovenskii, \emph{Foliations on {R}iemannian manifolds and
  submanifolds}, Birkh\"auser Boston Inc., Boston, MA, 1998, Appendix A by the
  author and Victor Toponogov. \MR{1486826 (99b:53043)}

\bibitem{Suwa:2000}
Tatsuo Suwa, \emph{Dual class of a subvariety}, Tokyo J. Math. \textbf{23}
  (2000), no.~1, 51--68. \MR{1763504}

\bibitem{Suwa2017}
\bysame, \emph{Relative cohomology for the sections of a complex of fine
  sheaves}, Kyoto University Research Information Repository (2017), 1--17.

\bibitem{sagemath}
{The Sage Developers}, \emph{{S}agemath, the {S}age {M}athematics {S}oftware
  {S}ystem ({V}ersion 9.2.0)}, 2021, {\tt https://www.sagemath.org}.

\bibitem{Varadarajan:1984}
V.~S. Varadarajan, \emph{Lie groups, {L}ie algebras, and their
  representations}, Graduate Texts in Mathematics, vol. 102, Springer-Verlag,
  New York, 1984, Reprint of the 1974 edition. \MR{746308}

\bibitem{Wang:1954}
Hsien-Chung Wang, \emph{Closed manifolds with homogeneous complex structure},
  Amer. J. Math. \textbf{76} (1954), 1--32. \MR{MR0066011 (16,518a)}

\bibitem{Willse:2019}
Travis Willse, \emph{Homogeneous real {$(2,3,5)$} distributions with isotropy},
  SIGMA Symmetry Integrability Geom. Methods Appl. \textbf{15} (2019), Paper
  No. 008, 28. \MR{3907747}

\end{thebibliography}
\end{document}